\documentclass[11pt]{article}
\usepackage[english]{babel}
\usepackage{amssymb, amsmath, amsthm} 
\usepackage[a4paper, centering]{geometry}
\usepackage{graphicx}
\usepackage{caption}
\usepackage{subfigure,color}
\usepackage[font={small,up}]{caption}

\numberwithin{equation}{section} 

\usepackage{pgfplots}
\usepackage{bm}

\usepackage{amsthm}
\makeatletter
\def\th@plain{%
  \thm@notefont{}
  \itshape 
}
\def\th@definition{%
  \thm@notefont{}
  \normalfont 
}
\makeatother
\newtheorem{thm}{Theorem}[section]
\newtheorem{prop}[thm]{Proposition}
\newtheorem{lem}[thm]{Lemma}
\newtheorem{cor}[thm]{Corollary}

\theoremstyle{definition}
\newtheorem{defn}[thm]{Definition}

\newtheorem{oss}[thm]{Remark}

\newcommand{\N}{\mathbb{N}}
\newcommand{\R}{\mathbb{R}}

\renewcommand{\epsilon}{\varepsilon}

\DeclareMathOperator{\dist}{dist}

\usepackage{enumitem}
\setlist[description]{nosep}

\title{A variational approach to the quasistatic limit of viscous dynamic evolutions in finite dimension}

\author{{\scshape Giovanni Scilla}\\
Department of Mathematics and Applications ``R. Caccioppoli''\\ University of Naples ``Federico II''\\
Via Cintia, Monte S. Angelo - 80126 Naples \\
(ITALY)
 \\
\\
{\scshape Francesco Solombrino}\\
Department of Mathematics and Applications ``R. Caccioppoli''\\ University of Naples ``Federico II''\\
Via Cintia, Monte S. Angelo - 80126 Naples \\
(ITALY)}

\date{}

\begin{document}

\maketitle

\begin{abstract}
In this paper we study the vanishing inertia and viscosity limit of a second order system set in an Euclidean space, driven by a possibly nonconvex  time-dependent potential satisfying very general assumptions. By means of a variational approach, we show that the solutions of the singularly perturbed problem converge to a curve of stationary points of the energy and characterize the behavior of the limit evolution at jump times. At those times, the left and right limits of the evolution are connected by a finite number of heteroclinic solutions to the unscaled equation.
\end{abstract}

\noindent
{\bf Keywords:} singular perturbations, Balanced Viscosity solutions, variational methods, quasi static limit, vanishing inertia and viscosity limit

\section{Introduction}
This paper is concerned with a variational analysis of the limit behavior of the system
\begin{equation}
\epsilon^2A\ddot{u}_\epsilon(t)+\epsilon B\dot{u}_\epsilon(t)+\nabla_{{x}} F(t,u_\epsilon(t))=0\,,
\label{mainequation}
\end{equation}
where $A$ and $B$ are positive definite and symmetric matrices and $F$ a time-depending driving potential, as the small parameter $\epsilon \to 0$. The above system describes the evolution of a mechanical system where both inertia and friction are taken into account, and can be considered as a second order approximation for the quasistatic evolution problem
\begin{equation}
\nabla_x F(t,u(t))=0,
\label{(1.2)}
\end{equation}
which appears in many areas of Applied Mathematics. In this respect, \eqref{mainequation} can be seen as a selection criterion for finding piecewise continuous solutions of \eqref{(1.2)}, since discontinuities are expected to appear if we allow $F$ for being nonconvex. As such, it has been used in several applications, even in an infinite-dimensional context.  We may mention, for instance,  \cite{DalSca}, where the solutions of the quasistatic evolution in linearly elastic perfect plasticity are approximated by the solutions of suitable dynamic visco-elasto-plastic problems; \cite{Sca}, where a ``vanishing viscosity and inertia'' limit is computed for a dynamic process in delamination; \cite{LN2,LN1},  where a ``vanishing inertia'' analysis is developed for a model of dynamic debonding
in the framework of fracture mechanics; \cite{Rubi,LRTT} for damage models, with a damping term in the wave equation. All these approaches build upon previous results in the simpler setting of vanishing viscosity (see, e.g., \cite{Mielke, EM, MRS1, MRS2, MT, DalSca, Ago1} and the references therein). There, problem \eqref{(1.2)} is seen as the limiting case of a system governed by an overdamped dynamics, that is 
\begin{equation}
\epsilon \dot{u}_\epsilon(t)+\nabla_{{x}} F(t,u_\epsilon(t))=0.
\label{problem1}
\end{equation}

In this paper we aim at providing  a general variational approach to the limit description of \eqref{mainequation}, extending some recent results for the first-order system \eqref{problem1}. For the moment, we will confine ourselves to a {\it finite-dimensional} setting and to a {\it smooth} driving potential $F$. We have to warn the reader that many of the mentioned applications, instead, deal with infinite dimensional rate-independent evolutions. In this case (see \cite[Introduction and Section 7]{Negri1}), $F$ usually takes the form
\[
F(t,u)=\underbrace{E(t,u)}_{\mbox{stored energy}}+\underbrace{D(u)}_{\mbox{dissipation potential}}\,,
\]
where the existence of $D$ involves some constraints on the admissible increments, or equivalently forces to allow for nonsmoothness. However, many nontrivial issues already arise in our simpler setting, where significant steps towards a general understanding of the problem can be made. Before describing our approach in detail, we recall some recent abstract results on the limit behavior of systems of the type \eqref{mainequation} or \eqref{problem1}.

\noindent
{\bf Results present in literature.} The asymptotic behavior of the solutions of singularly perturbed differential systems in finite dimension has been investigated by several authors \cite{Ago1, Ago-Rossi, N, Zanini, Sci-Sol}. A first general approach to the limit behavior of the solutions of \eqref{mainequation} has been developed by Agostiniani~\cite{Ago1}, extending a previous approach by Zanini \cite{Zanini} for the vanishing viscosity problem \eqref{problem1}. Under suitable assumptions on $F(t,x)$, it is proven that, when $\epsilon \to 0$, it holds
\begin{equation*}
(u_\epsilon(t),\epsilon B\dot{u}_\epsilon(t))\to (u(t),0),
\end{equation*}
where $u$ is a piecewise-continuous function solving
\begin{equation}
\nabla_{{x}} F(t,u(t))=0
\label{stazintro}
\end{equation}
at every continuity time $t$. Moreover, the trajectories of the system at the jump times $t_i$ are described through the autonomous second order system
\begin{equation}
A\ddot{w}(s)+B\dot{w}(s)+\nabla_{{x}} F(t_i, w(s))=0,
\label{het}
\end{equation}
with conditions $\displaystyle\lim_{s\to-\infty}w(s)=u_-(t_i)$, $\displaystyle\lim_{s\to+\infty}w(s)=u_+(t_i)$ where $\nabla_{{x}} F(t_i, u_+(t_i))=0$,  and $\displaystyle\lim_{s\to\pm\infty}\dot{w}(s)=0$. 

It is worth noting that the presence of the damping term $\epsilon B\dot{u}_\epsilon$ is crucial for obtaining the above results, as it also will be in our setting. There are indeed examples of singularly perturbed second order potential-type equations (with vanishing inertial term), such that the dynamic solutions do not converge to equilibria, while the formal limit equation is \eqref{(1.2)} (see, e.g.,~\cite{N}). 

As a matter of fact, the set of assumptions considered in \cite{Ago1} involves some significant restrictions. First of all,  a central role in the constructive approach contained there is played by the so called \emph{ transversality conditions} (see \cite[Assumption 2]{Zanini}), holding at degenerate critical points of $F(t,\cdot)$. Although the genericity of such assumption, it excludes some interesting situations, like bifurcation from a trivial critical state with change of stability. Even more cumbersome is the fact that one has to assume that the limit points $u_+(t_i)$ of the heteroclinic trajectories governed by \eqref{het} are strict local minimizers of $F(t_i, \cdot)$ (\cite[Assumption 4]{Ago1}), while in general they could even be saddle points.

Therefore, we take a different viewpoint  of variational character. Our starting point is the paper by Agostiniani and Rossi \cite{Ago-Rossi}, concerning the limit behavior of the first-order system \eqref{problem1}.  Along with  typical regularity, coercivity and power control assumptions on $F$ (which we also consider, see {\bf (F0)-(F2)}), a crucial role in the analysis is played by the assumption that the set of critical points $\mathcal{C}(t):=\left\{u\in X:\, \nabla_x F(t,u)=0\right\}$ consists of {\it isolated points} (also this one is assumed in  our paper, see {\bf(F5)} below). This allows for recovering the necessary  compactness through a careful analysis of the behavior at jumps. They indeed show that, up to a subsequence, the solutions $u_\epsilon$ of \eqref{problem1} pointwise converge, as $\epsilon\to0$, to a so-called \emph{Balanced Viscosity solution} $u$ of the limit problem \eqref{stazintro} defined at every $t\in[0,T]$. The function $u$ is \emph{regulated}, i.e., the left and right limits $u_-(t)$ and $u_+(t)$ exist at every $t$, and satisfies the stability condition
\begin{equation*}
\nabla_xF(t,u_-(t))=\nabla_xF(t,u_+(t))=0.
\end{equation*}
Furthermore,  under additional assumptions (in the same spirit of our assumptions {\bf (F6)-(F3')}), $u$ fulfills the \emph{energy balance} 
\begin{equation*}
F(t,u_+(t))+\mu([s,t])=F(s,u_-(s))+\int_s^t \partial_r F(r,u(r))\,dr\quad \forall\,0\leq s\leq t\leq T,
\end{equation*} 
where $\mu$ is a positive pure jump measure with an at most countable support $J$, that coincides with the jump set of $u$. For $t\in J$, the following jump relation hold:
\begin{equation*}
\mu(\{t\})=F(t,u_-(t))-F(t,u_+(t))=c_t(u_+(t),u_-(t))\,,
\end{equation*}
where the cost $c_t$ is defined as
\begin{equation}
c_t(u_1,u_2):=\inf\left\{\int_0^1\|\dot{v}(s)\|\|\nabla_x F(t, v(s))\|\,ds:\,v(0)=u_1, v(1)=u_2\right\}\,.
\label{costfunintro}
\end{equation}
In particular, at a jump point $t\in J$, transitions between (meta)stable states of the energy happen along (a finite union of) heteroclinic orbits of the unscaled autonomous gradient flow
\[
\dot w(s)=-\nabla_x F(t, w(s))\,.
\]
\noindent
{\bf Description of our results.} We now turn to the  description of our results. A first step is to investigate the compactness properties of the sequence $(u_\epsilon(t))_\epsilon$. 
To this end, we preliminarly provide some \emph{a-priori} estimates (Proposition~\ref{apriori}), namely $L^\infty$-bounds on $u_\epsilon$, $\epsilon\dot{u}_\epsilon$ and $\epsilon^2\ddot{u}_\epsilon$, and bounds for $\epsilon\|\dot{u}_\epsilon\|^2_{L^2}$, $\frac{1}{\epsilon}\|\nabla_x F\|^2_{L^2}$ and $\epsilon^3\|\ddot{u}_\epsilon\|^2_{L^2}$. 
These bounds stem out of \eqref{mainequation} and the energy equality
\begin{equation}
\begin{split}
\frac{\epsilon^2}{2} \|\dot{u}_\epsilon(t)\|^2+F(t,u_\epsilon(t))+\epsilon\int_s^t \|\dot{u}_\epsilon(\tau)\|^2\,\mathrm{d}\tau=\\
\frac{\epsilon^2}{2} \|\dot{u}_\epsilon(s)\|^2+F(s,u_\epsilon(s))+\int_s^t\partial_\tau F(\tau,u_\epsilon(\tau))\,\mathrm{d}\tau,
\label{inequi}
\end{split}
\end{equation}
(we assume here for simplicity of exposition that the matrices $A$ and $B$ in \eqref{mainequation} are equal to the identity matrix), but some care has to be used to estimate  $\frac{1}{\epsilon}\|\nabla_x F\|^2_{L^2}$ and $\epsilon^3\|\ddot{u}_\epsilon\|^2_{L^2}$ separately. In particular, we are forced to require more regularity on the energy $F(t,x)$ with respect to the first-order case analysed in \cite{Ago-Rossi}. Namely, with assumption {\bf (F4)}, we consider $F(t,\cdot)\in C^2(X)$ for every $t\in[0,T]$ and the Hessian matrix $\nabla_x^2 F(t,u)$ to be continuous in the product space $[0,T]\times X$. We remark nevertheless that the same assumption was already present in \cite{Ago1}.

With the aforementioned a-priori bounds, under assumptions {\bf (F0)-(F5)} (see Section~\ref{assumptions}), one can pass to the limit along a subsequence independent of $t$ on $u_\epsilon(t)$ (Theorem ~\ref{compact}), and find that they converge for all $t$ to a regulated function $u(t)$, whose jump set $J$ is at most countable. This limit function in general satisfies the stability condition
\begin{equation}
\nabla_x F(t,u_-(t))=\nabla_x F(t,u_+(t))=0
\label{stazintro2}
\end{equation}
for all $t \in [0,T]$. 

The proof moves from the remark that, by the previous a priori estimates, the sequence of positive measures
\begin{equation*}
\mu_\epsilon:= \epsilon\|\dot{u}_\epsilon(\cdot)\|^2\mathcal{L}^1
\end{equation*} 
is equibounded in $L^1(0,T)$ and then is weakly* converging to a positive finite measure $\mu$ on $[0,T]$, whose set of atoms $J$ is at most countable. The key point is showing that oscillations in the limit of the sequence $u_\epsilon(t)$ always happen at a nonvanishing cost, and therefore a limit is uniquely determined for each $t\notin J$. To this end, a crucial role is played by Proposition~\ref{propdelta}, where, exploiting {\bf (F5)} and assuming that $s_k$, $t_k$ are sequences in $[0,T]$ both converging to $t$ and such that $u_{\epsilon_k}(s_k)\to u_1$, $u_{\epsilon_k}(t_k)\to u_2$, with $u_1\neq u_2$, it is shown that the dissipation integrals 
\begin{equation}
\int_{s_k}^{t_k}\|\nabla_xF(r,u_{\epsilon_k}(r))\|\|\dot u_{\epsilon_k}(r)\|\mathrm{d}r
\end{equation}
are bounded away from zero for $k$ large enough. The resulting continuous limit function $u(t)$ complies with the stability condition \eqref{stazintro2} at every $t\in[0,T]\backslash J$. The existence of left and right limits of $u(t)$ still relies on Proposition~\ref{propdelta} and on the asymptotic and monotonicity properties of the functions
\begin{equation*}
g_\epsilon(t):= F(t,u_\epsilon(t))+\frac{\epsilon^2}{2}\|\dot{u}_\epsilon(t)\|^2-\int_0^t \partial_rF(r,u_\epsilon(r))\,\mathrm{d}r
\end{equation*}
via Helly's Theorem.

After that compactness and stability properties of the limit evolution have been established, we show that the limit evolution $u(t)$ satisfies a balance between the stored energy and  the power spent  along the evolution in an interval of time $[s, t]\subset [0, T]$, up to a positive dissipation cost which is concentrated on the jump set of $u$, or equivalently on the jump set of the energy $t\longmapsto F(t, u(t))$. Namely, we prove in Theorem~\ref{genbalance} that there exists a {\it positive atomic measure} $\mu$, with ${\rm supp}(\mu)=J$, such that
\begin{equation}
F(t,u_+(t))+\mu([s,t])=F(s,u_-(s))+\int_s^t \partial_r F(r,u(r))\,dr,
\label{stabilityidentity}
\end{equation} 
for all $0\leq s\leq t\leq T$. 
While the ``$\,\le\,$'' inequality in \eqref{stabilityidentity} can be obtained passing to the limit in the energy inequality deriving from \eqref{inequi}, the ``$\,\ge\,$'' ensues from the stability condition, and requires the additional assumptions {\bf(F6)-(F3')} on the energy, which are instead not necessary in order to recover compactness (see Section \ref{assumptions}).

We retrieve for $u$ an analogous of the notion of {\it Balanced Viscosity solution} introduced in the first-order setting in \cite{Ago-Rossi}, as we show (Remark~\ref{equality}) that, for all $t \in J$,
\begin{equation}
F(t,u_-(t))-F(t,u_+(t))=\mu(\{t\})=c_t(u_+(t),u_-(t))\,.
\label{jumps}
\end{equation}
The cost $c_t$ is, as one may expect, different from the cost (\ref{costfunintro}) considered in \cite{Ago-Rossi},  since it reflects the second-order structure of our problem. Assuming for simplicity of exposition that the matrices $A$ and $B$ in \eqref{mainequation} are equal to the identity matrix, it is actually defined as
\begin{equation}
c_t(u_1,u_2):=\displaystyle\inf\left\{\frac{1}{2}\int_{-N}^{N}\|\nabla_xF_t(v(s))+\ddot{v}(s)\|^2+\|\dot{v}(s)\|^2\,\mathrm{d}s:\,v\in V^{t, N}_{u_1,u_2},\,N\in \mathbb{N}\right\},
\label{costintro}
\end{equation}
where 
\begin{equation*}
V^{t,N}_{u_1,u_2}:=\Bigl\{v\in W^{2,2}([-N, N],X):\, v(-N)=u_1, v(N)=u_2, \dot v(-N)=\dot v(N)=0\Bigr\},
\end{equation*}
and $u_1,u_2\in X$. 

Exploiting {\bf (F5)} and some general properties of the cost $c_t$, proved in Theorem~\ref{cost}, by means of an inductive construction, we are indeed able to show that an optimal decrease of the energy can be realised at discontinuities via a {\it finite} number of transitions between metastable states. This gives the desired equivalence and entails our first main result, Theorem \ref{main1}. There we show that the limit $u(t)$ is exactly a Balanced Viscosity solution in the sense discussed above.

From the jump conditions \eqref{jumps} we can deduce a variational description of the behavior of the limit evolutions at jumps. More precisely, in  Theorem~\ref{variat} we prove that, if \eqref{jumps} holds, then every infimizing sequence for $c_t(u_-(t),u_+(t))$ converges to a finite union of heteroclinic solutions to the unscaled problem
\begin{equation}
A\ddot{v}^i(s)+B\dot{v}^i(s)+\nabla_xF_t(v^i(s))=0,\quad\forall\,s\in\R.
\label{unscaled}
\end{equation}
Each $v^i$ connects two distinct critical points $u_{i-1}$, $u_i$ of $\nabla_x F_t(\cdot)$ in the sense that $v^i(-\infty)=u_{i-1}$, $v^i(+\infty)=u_i$, and the endpoints of this finite chain of critical points are exactly $u_-(t)$ and $u_+(t)$. In this way, the results of \cite{Ago1} are extended to a general framework of driving potentials. We also believe that the approach pursued here can represent a solid building block for the understanding of related infinite dimensional problems.
\\
{\bf Plan of the paper.} The paper is organized as follows. In Section~\ref{prel} we fix notation and introduce the main assumptions on the energies $F(t,x)$ we will adopt throughout the paper. Section~\ref{preliminary} contains some preliminary results useful in the sequel, as basic inequalities, a priori estimates and other technical tools.
In Section~\ref{compactness}, we prove  compactness of the $u_\epsilon$ and stability properties of the limit evolution.
In Section~\ref{energybalance} we show that the limit evolution $u$ fulfills an energy balance with a cost concentrated on the jump points (Theorem~\ref{genbalance}). Finally, with Theorem \ref{variat} we provide a variational characterization of the behavior of the limit evolution at the jump times, by showing that the left and right limits $u_-(t)$ and $u_+(t)$, respectively, are connected by a finite number of heteroclinic solutions of the unscaled autonomous equation \eqref{unscaled}.

\section{Notation and main assumptions on the energy}\label{prel}

In this section, we fix some general notation that will be used throughout. We aim at describing second order quasistatic evolutions driven by  a time-dependent, possibly nonconvex energy functional $F: [0, T]\times X\longrightarrow\R$, with $T>0$.
Throughout the paper we assume that $(X,\|\cdot\|)$ is a Euclidean space with dimension $n\geq1$, endowed with inner product $\langle \cdot, \cdot \rangle$.  For a symmetric, positive definite operator  $Q:X\longrightarrow X$ the equivalent norm $\|\cdot\|_Q$ on $X$ is given by $\|u\|_Q:= \langle u, Qu\rangle^{\frac12}$. It holds  $\|u\|_Q=\|Q^{\frac12}u\|$, where $Q^{\frac12}$ is the principal square root of $Q$. With this, the Cauchy-Schwarz inequality takes the form
\begin{equation}
|\langle z_1,z_2\rangle|\leq \|z_1\|_{Q^{-1}}\|z_2\|_{Q}\leq \frac{1}{2}(\|z_1\|_{Q^{-1}}^2+\|z_2\|_{Q}^2),
\label{CSequi}
\end{equation}
for every $z_1,z_2\in X$. We also point out the simple identity
\begin{equation}
\|z_1+Qz_2\|_{Q^{-1}}^2=\|z_1\|_{Q^{-1}}^2+2\langle z_1,z_2\rangle+\|z_2\|_Q^2
\label{simpleid}
\end{equation}
for every $z_1,z_2\in X$. 
The symbol $\|Q\|$ will stand for the operator norm of $Q$.

Given $x \in X$ and $M > 0$, we will denote by $B_M(x)$ the closed ball centered at $x$ with radius $M$. When the ball is centered at $0$, the shortcut $B_M$  will be used. In order to shorten notation, we will often indicate by $u(-\infty)$ and $u(+\infty)$ the limits
\begin{equation*}
\displaystyle \lim_{s\to-\infty}u(s),\quad \displaystyle \lim_{s\to+\infty}u(s),
\end{equation*} 
respectively.

We now recall the definition and some basic properties of regulated functions, which will play an important role in the sequel.

\begin{defn}
A function $u:[0,T]\longrightarrow X$ is said to be \emph{regulated} if for each $s\in[0,T]$ there exist the one-sided limits $u_+(s)$ and $u_-(s)$ defined as
\begin{equation*}
u_+(s):=\lim_{h\to0^+} u(s+h),
\end{equation*}
and
\begin{equation*}
u_-(s):=\lim_{h\to0^-} u(s+h).
\end{equation*}
\end{defn}

The existence of the above limits immediately implies that, for each $N \in \mathbb{N}$, the set of points $t$ where $\|u_+(t)-u_-(t)\|\ge \frac1N$ cannot have accumulation points. It follows that the jump set of  a regulated function is at most countable. In particular, $u_+$ is a right-continuous Lebesgue representative of $u$ and $u_-$ is a left-continuous one. 

It is well-known that a function of bounded variation  $f\in {\rm BV}([a,b];\R)$ is a real-valued regulated function. The representatives $f_+$ and $f_-$ are in this case good representatives in the sense of \cite[Theorem 3.28]{AFP}: as shown there, the distributional derivative $Df$ (which is a Radon measure) satisfies 
\begin{equation}\label{teoremafond}
Df([s,t])=f_+(t)-f_-(s)
\end{equation}
for any $s,t\in[a,b]$ with $s\le t$.

\subsection{Assumptions on the energy}\label{assumptions}

We require the energy functional satisfy the following assumptions:\\

\begin{description}
\item[(F0)] $F\in C^1([0,T]\times X)$;\\
\item[(F1)] the map $\mathcal{F}:u\longmapsto \displaystyle\sup_{t\in[0, T]}|F(t,u)|$ satisfies the condition that, for every $\rho>0$, the sublevel set $\{u\in X:\, \mathcal{F}(u)\leq \rho\}$ is bounded;\\

\item[(F2)] there exist $C_1,C_2>0$ such that 
\begin{equation}
 |\partial_t F(t,u)|\leq C_1 F(t,u)+C_2,\quad \forall (t,u)\in[0,T]\times X,
\end{equation}
where $\partial_t F$ denotes the partial derivative of $F(t,x)$ with respect to $t$;\\
\item[(F3)] for every $t\in[0,T]$ and $u\in B_M$, it holds
\begin{equation}
|\partial_t\nabla_x F(t,u)|\leq a_M(t),
\label{estim}
\end{equation}
for some function $a_M\in L^1(0,T)$.\\
\item[(F4)] $F(t,\cdot)\in C^2(X)$ for every $t\in[0,T]$ and the Hessian matrix $\nabla_x^2 F(t,u)$ is continuous on $[0,T]\times X$;\\
\item[(F5)] for any $t\in[0,T]$, the set of critical points
\begin{equation}
\mathcal{C}(t):=\{u\in X:\, \nabla_x F(t,u)=0\},
\label{criticalset}
\end{equation}
where $\nabla_x F$ denotes the gradient of $F$ with respect to $x$, consists of isolated points.
\end{description}

We note that from {\bf (F2)} and the Gronwall's inequality we get
\begin{equation*}
F(t,u)\leq \left(F(s,u)+C_2\right)e^{C_1(t-s)}-C_2,
\end{equation*}
for all $0\le s\le t\le T$ and $u\in X$. This implies, in particular, that
\begin{equation}
\sup_{t\in[0,T]}|F(t,u)|\leq (F(0,u)+C_2)e^{C_1T}-C_2,\quad\text{ for all }u\in X,
\label{equibound}
\end{equation}
which will be useful to deduce equi-boundedness estimates.

The above assumptions {\bf (F0)-(F5)} will be enough to establish compactness of the limit functions in Section~\ref{compactness}, while in Section~\ref{energybalance} we will be forced to consider an additional assumption {\bf(F6)} and to strengthen assumption {\bf(F3)} in order to recover an energy balance.
We will namely assume that the driving energy $F(t,x)$ satisfies:

\begin{enumerate} 
\item[{\bf (F6)}] For any $t\in[0,T]$ and for any $u\in \mathcal{C}(t)$, 
\begin{equation*}
\displaystyle \mathop{\lim\inf}_{v\to u}\frac{F(t,v)-F(t,u)}{\|\nabla_xF(t,v)\|}\geq0.
\end{equation*}
\item[{\bf (F3')}] For  fixed $u \in X$, the function $t\longmapsto \nabla_xF(t,u)$ is Lipschitz continuous on $[0,T]$, locally uniformly w.r.t. $u$. 
\end{enumerate}
Condition {\bf(F3')} is satisfied, e.g., if the mixed derivative $\partial_t\nabla_xF(\cdot,\cdot)$ is continuous on $[0,T]\times X$. Thus, it is a reinforcement of assumption {\bf (F3)}, since it amounts to require that $a_M$ therein be bounded on $[0,T]$.

The generic character of assumptions {\bf(F5)-(F6)}, which are satisfied by a broad class of potentials, is discussed in detail in \cite[Section~2.1]{Sci-Sol}.
It is shown there that {\bf (F6)} holds, for instance, if $F(t, \cdot)$ is convex for fixed $t$, but also whenever $F(t,\cdot)$ complies with the \L ojasiewicz inequality.

\section{Preliminary results}\label{preliminary}

We state and prove here some preliminary results. In particular, we show that each solution $u_\epsilon$ to \eqref{mainequation} complies, for every fixed $\epsilon$ and for every $t\in[0,T]$, with an energy identity that will be a useful tool for the sequel.

\begin{lem}
Under assumptions {\bf (F0)-(F2)} there exists a unique $u_\epsilon:[0,T]\longrightarrow X$ of class $C^2$, solution of the Cauchy problem associated to \eqref{mainequation} with initial condition at $t=0$. Moreover, the following \emph{energy identity} holds:
\begin{equation}
\begin{split}
\epsilon\int_s^t\|\dot{u}_\epsilon(\tau)\|_B^2\,\mathrm{d}\tau+\frac{\epsilon^2}{2} \|\dot{u}_\epsilon(t)\|_A^2&+F(t,u_\epsilon(t))\\
&=\frac{\epsilon^2}{2} \|\dot{u}_\epsilon(s)\|_A^2+F(s,u_\epsilon(s))+\int_s^t\partial_\tau F(\tau,u_\epsilon(\tau))\,\mathrm{d}\tau,
\end{split}
\label{zerostima}
\end{equation}
for every $0\leq s<t\leq T$.
\end{lem}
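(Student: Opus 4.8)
The plan is to prove the two assertions — existence/uniqueness of the $C^2$ solution and the energy identity \eqref{zerostima} — in that order. For the first part, I would rewrite \eqref{mainequation} as a first-order system in the phase variable $(u_\epsilon,\dot u_\epsilon)\in X\times X$: setting $p_\epsilon=\dot u_\epsilon$, the equation becomes $\dot u_\epsilon = p_\epsilon$, $\dot p_\epsilon = -\epsilon^{-2}A^{-1}\bigl(\epsilon B p_\epsilon + \nabla_x F(t,u_\epsilon)\bigr)$, which is legitimate since $A$ is positive definite hence invertible. By {\bf (F0)} the right-hand side is continuous in $t$ and locally Lipschitz in $(u_\epsilon,p_\epsilon)$, so the Cauchy--Lipschitz (Picard--Lindel\"of) theorem yields a unique maximal $C^1$ solution of the system, and then $u_\epsilon$ is $C^2$ by bootstrapping (indeed $\ddot u_\epsilon$ equals a continuous function of $t$). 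It remains to rule out finite-time blow-up, i.e. to show the maximal solution is global on $[0,T]$. For this I would use the energy identity itself: a standard a priori argument shows that along the solution the quantity $G_\epsilon(t):=\tfrac{\epsilon^2}{2}\|\dot u_\epsilon(t)\|_A^2+F(t,u_\epsilon(t))$ cannot grow too fast. Combining the differential version of \eqref{zerostima} with {\bf (F2)} gives $\tfrac{d}{dt}G_\epsilon(t)\le \partial_t F(t,u_\epsilon(t))\le C_1 F(t,u_\epsilon(t))+C_2\le C_1 G_\epsilon(t)+C_2$ (using that the kinetic term is nonnegative, and modulo a harmless additive constant to handle the sign of $F$), so Gronwall bounds $G_\epsilon$ on any compact time interval; then {\bf (F1)} forces $u_\epsilon$ to stay in a bounded set and $\dot u_\epsilon$ to stay bounded, precluding blow-up. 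Hence the solution extends to all of $[0,T]$.

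For the energy identity, the computation is the classical energy estimate for a damped second-order ODE, carried out rigorously. Since $u_\epsilon\in C^2$, the map $\tau\mapsto \tfrac{\epsilon^2}{2}\|\dot u_\epsilon(\tau)\|_A^2 + F(\tau,u_\epsilon(\tau))$ is $C^1$ on $[0,T]$, with derivative
\begin{equation*}
\epsilon^2\langle A\dot u_\epsilon(\tau),\ddot u_\epsilon(\tau)\rangle + \langle \nabla_x F(\tau,u_\epsilon(\tau)),\dot u_\epsilon(\tau)\rangle + \partial_\tau F(\tau,u_\epsilon(\tau)).
\end{equation*}
Testing \eqref{mainequation} with $\dot u_\epsilon(\tau)$ gives $\epsilon^2\langle A\ddot u_\epsilon(\tau),\dot u_\epsilon(\tau)\rangle + \epsilon\|\dot u_\epsilon(\tau)\|_B^2 + \langle\nabla_x F(\tau,u_\epsilon(\tau)),\dot u_\epsilon(\tau)\rangle = 0$, using the symmetry of $A$ and $B$ and the definition $\|v\|_B^2=\langle v,Bv\rangle$. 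Substituting, the derivative of the above map equals $-\epsilon\|\dot u_\epsilon(\tau)\|_B^2 + \partial_\tau F(\tau,u_\epsilon(\tau))$. Integrating this identity over $[s,t]$ and rearranging yields exactly \eqref{zerostima}.

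I expect the main obstacle to be the global existence step: the local theory is routine, but one must be careful that the a priori bound genuinely closes, i.e. that the Gronwall argument on $G_\epsilon$ does not secretly require information we do not have (for instance it is important that $F$ need not be bounded below, so one works with $F+C$ for a suitable constant, or notes that \eqref{equibound} already controls $\sup_t|F(t,u)|$ in terms of $F(0,u)$). The rest — the phase-space reformulation, the regularity bootstrap, and the differentiate-and-integrate derivation of \eqref{zerostima} — is standard once the function spaces and the symmetry of $A,B$ are used correctly. One minor point worth stating explicitly is that the energy identity holds as written for $0\le s<t\le T$ and extends trivially to $s=t$; no extra regularity beyond $C^2$ is needed because all integrands above are continuous.
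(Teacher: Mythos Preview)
Your proposal is correct and follows the same approach as the paper: local existence by standard ODE theory, and the energy identity by testing \eqref{mainequation} with $\dot u_\epsilon$ and integrating. If anything, you are more thorough than the paper, which dispatches existence in one sentence (invoking {\bf (F2)} only to note that $F$ is bounded below and calling the rest ``a standard argument in ODEs'') without spelling out the Gronwall step for global existence that you supply.
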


\proof
Since the power control {\bf (F2)} provides, in particular, the boundedness from below of the energy $F$, the (local) existence of $u_\epsilon$ is ensured by a standard argument in ODEs for Cauchy problems associated to \eqref{mainequation}. Testing \eqref{mainequation} by $\dot{u}_\epsilon$ we get 
\begin{equation}
\begin{split}
&\epsilon^2\langle A\ddot{u}_\epsilon(r), \dot{u}_\epsilon(r)\rangle + \epsilon\langle B\dot{u}_\epsilon(r), \dot{u}_\epsilon(r) \rangle+\langle\nabla_xF(r,u_\epsilon(r)),\dot{u}_\epsilon(r)\rangle \\
&=\frac{\epsilon^2}{2}\frac{\mathrm{d}}{\mathrm{d}r}\langle A\dot{u}_\epsilon(r), \dot{u}_\epsilon(r) \rangle + \epsilon \langle B\dot{u}_\epsilon(r), \dot{u}_\epsilon(r) \rangle + \frac{\mathrm{d}}{\mathrm{d}r}F(r, u_\epsilon(r))-\partial_r F(r,u_\epsilon(r))=0,
\end{split}
\end{equation}
whence, integrating in time between $s$ and $t$, with $s,t\in[0,T]$ we obtain
\begin{equation*}
\begin{split}
\frac{\epsilon^2}{2} \|\dot{u}_\epsilon(t)\|_A^2-\frac{\epsilon^2}{2} \|\dot{u}_\epsilon(s)\|_A^2+\epsilon\int_s^t\|\dot{u}_\epsilon(\tau)\|_B^2\,\mathrm{d}\tau&+F(t,u_\epsilon(t))-F(s,u_\epsilon(s))\\
&-\int_s^t\partial_\tau F(\tau,u_\epsilon(\tau))\,\mathrm{d}\tau=0,
\end{split}
\end{equation*}
that corresponds to \eqref{zerostima}.

\endproof

As an easy remark, we note that just neglecting the non-negative term $\displaystyle\epsilon\int_s^t\|\dot{u}_\epsilon(\tau)\|_B^2\,\mathrm{d}\tau$ in \eqref{zerostima}, we get the inequality
\begin{equation}
\frac{\epsilon^2}{2} \|\dot{u}_\epsilon(t)\|_A^2+F(t,u_\epsilon(t))\leq\frac{\epsilon^2}{2} \|\dot{u}_\epsilon(s)\|_A^2+F(s,u_\epsilon(s))+\int_s^t\partial_\tau F(\tau,u_\epsilon(\tau))\,\mathrm{d}\tau,
\label{unostima}
\end{equation}
for every $0\leq s<t\leq T$. We will often refer to \eqref{unostima} as the \emph{energy inequality}.\\     \\

The following proposition collects some \emph{a priori} bounds, involving $u_\epsilon$ and its derivatives, that will provide key estimates in the proofs of the main results of this paper.

\begin{prop}[A priori bounds]\label{apriori}
Assume $\bf (F0)-(F4)$. Let $u_\epsilon:[0,T]\longrightarrow X$ of class $C^2$ be the solution of the Cauchy problem associated to \eqref{mainequation} with initial condition at $t=0$, and assume $u_\epsilon(0)$, $\epsilon\dot{u}_\epsilon(0)$ to be bounded.  Then for all $\epsilon>0$ and $t\in[0,T]$, the following a priori bounds hold:
\begin{enumerate}
\item[\rm (i)] $\|u_\epsilon(t)\|\leq C$;
\item[\rm (ii)] $\epsilon\|\dot{u}_\epsilon(t)\|\leq C$;
\item[\rm (iii)] $\epsilon^2\|\ddot{u}_\epsilon(t)\|\leq C$;
\item[\rm (iv)] $\displaystyle\epsilon\int_0^T\|\dot{u}_\epsilon(r)\|^2\,\mathrm{d}r\leq C$;
\item[\rm (v)] $\displaystyle\frac{1}{2\epsilon}\int_0^T\|\nabla_xF(r,u_\epsilon(r))+\epsilon^2A\ddot{u}_\epsilon(r)\|^2\,\mathrm{d}r\leq C$;
\item[\rm (vi)] $\displaystyle\epsilon\left|\int_0^T\langle \ddot{u}_\epsilon(r), \nabla_x F(r,u_\epsilon(r))\rangle\,\mathrm{d}r\right|\leq C$;
\item[\rm (vii)] $\displaystyle\frac{1}{2\epsilon}\int_0^T\|\nabla_xF(r,u_\epsilon(r))\|^2\,\mathrm{d}r\leq C$;
\item[\rm (viii)] $\displaystyle{\epsilon^3}\int_0^T\|\ddot{u}_\epsilon(r)\|^2\,\mathrm{d}r\leq C$.
\end{enumerate}
\end{prop}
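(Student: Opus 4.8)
The plan is a bootstrap, reading every bound out of the energy relations \eqref{zerostima}--\eqref{unostima} and of the equation \eqref{mainequation} itself, in the order in which the statements are listed. I first treat \textrm{(i)}--\textrm{(ii)} together. Set $\mathcal{E}_\epsilon(t):=\tfrac{\epsilon^2}{2}\|\dot u_\epsilon(t)\|_A^2+F(t,u_\epsilon(t))$. Since \textbf{(F2)} forces $F\ge -C_2/C_1$ on all of $[0,T]\times X$, one has $\mathcal{E}_\epsilon+C_2/C_1\ge 0$; moreover $\partial_\tau F(\tau,u_\epsilon(\tau))\le C_1F(\tau,u_\epsilon(\tau))+C_2\le C_1(\mathcal{E}_\epsilon(\tau)+C_2/C_1)$, so the energy inequality \eqref{unostima} gives $\mathcal{E}_\epsilon(t)+C_2/C_1\le (\mathcal{E}_\epsilon(s)+C_2/C_1)+C_1\int_s^t(\mathcal{E}_\epsilon(\tau)+C_2/C_1)\,\mathrm{d}\tau$. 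By Gronwall, $\mathcal{E}_\epsilon(t)\le (\mathcal{E}_\epsilon(0)+C_2/C_1)e^{C_1T}-C_2/C_1$, and the hypothesis that $u_\epsilon(0),\epsilon\dot u_\epsilon(0)$ are bounded together with the continuity of $F$ bounds $\mathcal{E}_\epsilon(0)$ uniformly. Hence $\mathcal{E}_\epsilon(t)\le C$ for all $\epsilon,t$, which yields \textrm{(ii)} (after passing from $\|\cdot\|_A$ to $\|\cdot\|$) and $F(t,u_\epsilon(t))\le C$. To deduce \textrm{(i)} I propagate this bound to all times via the two-sided Gronwall consequence of \textbf{(F2)}, obtaining $\mathcal{F}(u_\epsilon(t))\le C$ for every $t$, and then invoke the coercivity \textbf{(F1)}: the $u_\epsilon(t)$ lie in a fixed ball $B_M$.

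Once $u_\epsilon([0,T])\subset B_M$, the maps $\nabla_xF$ and, by \textbf{(F4)}, $\nabla_x^2F$ are bounded on the compact set $[0,T]\times B_M$, and the next three bounds are short. For \textrm{(iii)}, write \eqref{mainequation} as $\epsilon^2A\ddot u_\epsilon=-\epsilon B\dot u_\epsilon-\nabla_xF(t,u_\epsilon)$, estimate the right side by \textrm{(ii)} and the boundedness of $\nabla_xF$, and invert $A$. For \textrm{(iv)} I use the full energy \emph{identity} \eqref{zerostima} with $s=0$, $t=T$: the right side is bounded (bounded initial data, and $|\partial_\tau F(\tau,u_\epsilon(\tau))|\le C_1F+C_2\le C$), while on the left the kinetic and potential terms are bounded below, so $\epsilon\int_0^T\|\dot u_\epsilon\|_B^2\le C$. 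For \textrm{(v)}, \eqref{mainequation} gives $\nabla_xF(r,u_\epsilon)+\epsilon^2A\ddot u_\epsilon=-\epsilon B\dot u_\epsilon$, hence $\tfrac1{2\epsilon}\int_0^T\|\nabla_xF+\epsilon^2A\ddot u_\epsilon\|^2=\tfrac{\epsilon}{2}\int_0^T\|B\dot u_\epsilon\|^2\le C$ by \textrm{(iv)}.

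The delicate part — and the reason \textbf{(F3)}--\textbf{(F4)} are imposed — is to separate the two borderline quantities entering \textrm{(vii)}--\textrm{(viii)}. The role of \textrm{(vi)} is to control the cross term $\epsilon\int_0^T\langle\ddot u_\epsilon,\nabla_xF(r,u_\epsilon)\rangle\,\mathrm{d}r$ by integration by parts: it equals $\epsilon[\langle\dot u_\epsilon,\nabla_xF\rangle]_0^T-\epsilon\int_0^T\langle\dot u_\epsilon,(\partial_t\nabla_xF)(r,u_\epsilon)+\nabla_x^2F(r,u_\epsilon)\dot u_\epsilon\rangle\,\mathrm{d}r$, where $r\mapsto\nabla_xF(r,u_\epsilon(r))$ is absolutely continuous thanks to \textbf{(F3)} and the Lipschitzianity of $\nabla_xF(r,\cdot)$ on $B_M$. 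The boundary terms are controlled by \textrm{(ii)}; the Hessian term is $\le C\,\epsilon\int_0^T\|\dot u_\epsilon\|^2\le C$ by \textbf{(F4)} and \textrm{(iv)}; and the mixed term is $\le\int_0^T(\epsilon\|\dot u_\epsilon(r)\|)\,a_M(r)\,\mathrm{d}r\le C\|a_M\|_{L^1}$, the decisive observation being that $\epsilon\|\dot u_\epsilon\|$ is uniformly bounded by \textrm{(ii)} while $a_M\in L^1(0,T)$ by \textbf{(F3)}. Then for \textrm{(vii)} I expand $\|\nabla_xF\|^2=\|\nabla_xF+\epsilon^2A\ddot u_\epsilon\|^2-2\epsilon^2\langle\nabla_xF,A\ddot u_\epsilon\rangle-\epsilon^4\|A\ddot u_\epsilon\|^2$, discard the last non-positive term, integrate and divide by $2\epsilon$: the first term is $\le C$ by \textrm{(v)}, and the second equals $-\epsilon\int_0^T\langle\ddot u_\epsilon,A\nabla_xF\rangle\,\mathrm{d}r$, which is bounded exactly as in \textrm{(vi)} since $A$ is a fixed symmetric matrix (the integration by parts runs unchanged with $A\nabla_xF$ in place of $\nabla_xF$). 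Finally \textrm{(viii)} follows by squaring $\epsilon^2A\ddot u_\epsilon=-\epsilon B\dot u_\epsilon-\nabla_xF$, giving $\epsilon^3\|\ddot u_\epsilon\|^2\le\|A^{-1}\|^2(2\|B\|^2\,\epsilon\|\dot u_\epsilon\|^2+\tfrac2\epsilon\|\nabla_xF\|^2)$, and integrating with \textrm{(iv)} and \textrm{(vii)}.

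The main obstacle is precisely this last block: the bootstrap closes only because the otherwise uncontrollable quantity $\epsilon\int_0^T\langle\ddot u_\epsilon,\nabla_xF\rangle$ can be integrated by parts, which is what forces the $C^2$-regularity of \textbf{(F4)} and the integrable-in-time control of the mixed derivative in \textbf{(F3)}; these are exactly the features absent from the first-order analysis. A minor technical care is needed in \textrm{(vii)} if one instead routes the estimate through the term $\langle\nabla_xF,B\dot u_\epsilon\rangle$ coming from \eqref{mainequation}: there the Young weight must be tuned so that the resulting multiple of $\tfrac1\epsilon\int_0^T\|\nabla_xF\|^2$ is strictly smaller than $\tfrac1{2\epsilon}\int_0^T\|\nabla_xF\|^2$ and can thus be absorbed into the left-hand side.
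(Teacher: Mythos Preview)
Your proof is correct and follows essentially the same route as the paper: Gronwall on the total energy for (i)--(ii), the equation for (iii) and (v), the full identity \eqref{zerostima} for (iv), and the key integration by parts for (vi). The only cosmetic difference lies in (vii)--(viii): the paper writes the single identity
\[
\frac{1}{2\epsilon}\|\nabla_xF\|^2+\frac{\epsilon^3}{2}\|A\ddot u_\epsilon\|^2
=\frac{1}{2\epsilon}\|\nabla_xF+\epsilon^2A\ddot u_\epsilon\|^2-\epsilon\langle A\ddot u_\epsilon,\nabla_xF\rangle
\]
and reads off both bounds at once from (v) and (vi), whereas you first get (vii) by discarding the nonnegative $\epsilon^4\|A\ddot u_\epsilon\|^2$ term and then recover (viii) by squaring the equation and using (iv), (vii). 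Both arrangements are equivalent reorderings of the same computation.
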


\proof
(i) As a consequence of \eqref{equibound} we deduce the equi-boundedness of $F(t,u_\epsilon(t))$ and, in view of {\bf (F1)}, the compactness of $(u_\epsilon(t))_\epsilon$ at every $t$. In particular, we obtain (i).\\
(ii) By \eqref{unostima} it holds
\begin{equation*}
\epsilon\|\dot{u}_\epsilon(t)\|_A\leq \sqrt{2\left(|F(t,u_\epsilon(t))|+|F(0,u_\epsilon(0)|\right)+\epsilon^2\|\dot{u}_\epsilon(0)\|_A^2+2C_1\int_0^TF(r,u_\epsilon(r))\,\mathrm{d}r+2C_2T}.
\end{equation*}
Since $\|\cdot\|_A$ is an equivalent norm, the estimate follows from {\bf (F0)-(F2)}, \eqref{equibound}, (i) and the boundedness of $\dot{u}_\epsilon(0)$, $\epsilon\dot{u}_\epsilon(0)$. 
\\
(iii) From \eqref{mainequation} we get
\begin{equation*}
\epsilon^2\|A\ddot{u}_\epsilon(t)\|\leq \epsilon\|B\dot{u}_\epsilon(t)\|+\|\nabla_xF(t,u_\epsilon(t))\|\,.
\end{equation*}
Since $A$ is coercive, (iii) follows as a consequence of (i), (ii) and {\bf (F0)}.\\
(iv) By arguing as for (ii), the bound (iv) is a consequence of \eqref{zerostima}, (i), (ii), {\bf (F0)-(F2)}, \eqref{equibound} and the boundedness of $\dot{u}_\epsilon(0)$, $\epsilon\dot{u}_\epsilon(0)$.\\
(v) From \eqref{mainequation} we deduce the identity
\begin{equation*}
\frac{\epsilon}{2}\int_0^T \|B\dot{u}_\epsilon(r)\|^2\,\mathrm{d}r=\frac{1}{2\epsilon}\int_0^T\|\nabla_xF(r,u_\epsilon(r))+\epsilon^2A\ddot{u}_\epsilon(r)\|^2\,\mathrm{d}r,
\end{equation*}
from which the desired bound follows by applying (iv).\\
(vi) An integration by parts gives
\begin{equation*}
\begin{split}
&\epsilon\int_0^T\langle A \ddot{u}_\epsilon(r), \nabla_x F(r,u_\epsilon(r))\rangle\,\mathrm{d}r\\
&=\epsilon \Big[\langle \nabla_x F(\tau,u_\epsilon(\tau)),A\dot{u}_\epsilon(\tau) \rangle \Big|_0^T- \int_0^T\langle \partial_r\nabla_xF(r,u_\epsilon(r))+\nabla_x^2F(r,u_\epsilon(r))\dot{u}_\epsilon(r),A\dot{u}_\epsilon(r)\rangle\,\mathrm{d}r\Big];
\end{split}
\end{equation*}
moreover, by {\bf (F3)} and (ii) we get
\begin{equation*}
\begin{split}
\epsilon\int_0^T|\langle \partial_r\nabla_xF(r,u_\epsilon(r)), A\dot{u}_\epsilon(r)\rangle|\,\mathrm{d}r&\leq\epsilon\|A\|\int_0^T|\partial_r\nabla_xF(r,u_\epsilon(r))| \|\dot{u}_\epsilon(r)\|\,\mathrm{d}r\\
&\leq C\|a_C\|_{L^1(0,T)}
\end{split}
\end{equation*}
while by {\bf (F4)} we obtain
\begin{equation*}
\epsilon\int_0^T|\langle \nabla_x^2F(r,u_\epsilon(r))\dot{u}_\epsilon(r), \dot{u}_\epsilon(r)\rangle|\,\mathrm{d}r\leq \tilde{C}\|A\|\epsilon\int_0^T\|\dot{u}_\epsilon(r)\|^2\,\mathrm{d}r,
\end{equation*}
where $\tilde{C}$ is a uniform bound for $ \nabla_x^2F(t,u)$ on the compact set $[0,T]\times B_C$. Thus, combining these estimates with (iv), the boundedness of $\nabla_x F(t,u)$ on $[0,T]\times B_C$ and the boundedness of $\epsilon\|\dot{u}_\epsilon(t)\|$, the assertion follows.\\
(vii)-(viii) From the identity
\begin{align*}
\frac{1}{2\epsilon} \|\nabla_xF(r,u_\epsilon(r))+\epsilon^2A\ddot{u}_\epsilon(r)\|^2&= \frac{1}{2\epsilon}\|\nabla_xF(r,u_\epsilon(r))\|^2+\frac{\epsilon^3}{2}\|A\ddot{u}_\epsilon(r)\|^2\\
&+\epsilon\langle A\ddot{u}_\epsilon(r), \nabla_x F(r,u_\epsilon(r))\rangle,
\end{align*}
by integrating in time we get the estimate
\begin{align*}
& \displaystyle\frac{\epsilon^3}{2}\int_0^T\|A\ddot{u}_\epsilon(r)\|^2\,\mathrm{d}r+\frac{1}{2\epsilon}\int_0^T\|\nabla_xF(r,u_\epsilon(r))\|^2\,\mathrm{d}r\\
&\leq \frac{1}{2\epsilon} \int_0^T\|\nabla_xF(r,u_\epsilon(r))+\epsilon^2A\ddot{u}_\epsilon(r)\|^2\,\mathrm{d}r
+\epsilon\left|\int_0^T\langle A\ddot{u}_\epsilon(r), \nabla_x F(r,u_\epsilon(r))\rangle\,\mathrm{d}r\right|\,.
\end{align*}
From this, exploiting the coerciveness of $A$, both (vii) and (viii) immediately follow with (v) and (vi).

\endproof

\begin{cor}
Under assumptions of {\rm Proposition~\ref{apriori}}, it holds
\begin{equation*}
\epsilon^2\|\dot{u}_\epsilon(t)\|^2\to0,\, \text{as $\epsilon\to0$, }\,\text{ for almost every }\, t\in[0,T].
\end{equation*}
\label{a.e.}
\end{cor}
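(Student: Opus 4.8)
The plan is to extract the pointwise decay of $\epsilon^2\|\dot u_\epsilon(t)\|^2$ from the $L^1$-bound in Proposition~\ref{apriori}(iv). First I would observe that, by the equivalence of the norms $\|\cdot\|$ and $\|\cdot\|_A$ and by (iv), the sequence of non-negative functions
\begin{equation*}
f_\epsilon(t):=\epsilon\|\dot u_\epsilon(t)\|^2
\end{equation*}
satisfies $\int_0^T f_\epsilon(t)\,\mathrm{d}t\leq C$ uniformly in $\epsilon$. Then I would write
\begin{equation*}
\epsilon^2\|\dot u_\epsilon(t)\|^2=\epsilon\cdot f_\epsilon(t),
\end{equation*}
so that the claim amounts to showing $\epsilon f_\epsilon(t)\to0$ for a.e.\ $t$. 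Since we are free to pass to subsequences in the a.e.\ statement (and in fact the conclusion, once proven along every subsequence, gives it for the full family), the natural route is: take any sequence $\epsilon_k\to0$; by (iv) the integrals $\int_0^T f_{\epsilon_k}\,\mathrm{d}t$ are bounded, hence by Fatou's lemma $\int_0^T\liminf_k f_{\epsilon_k}(t)\,\mathrm{d}t\leq C<\infty$, so $\liminf_k f_{\epsilon_k}(t)<\infty$ for a.e.\ $t$, and therefore $\epsilon_k f_{\epsilon_k}(t)=\epsilon_k^2\|\dot u_{\epsilon_k}(t)\|^2$ has a subsequence converging to $0$ at a.e.\ such $t$.

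The above gives decay only along a subsequence at each point, which is weaker than the stated a.e.\ convergence of the full family. To upgrade it I would instead argue directly with the uniform $L^1$-bound. Fix $\delta>0$ and set $E_\epsilon^\delta:=\{t\in[0,T]:\epsilon\|\dot u_\epsilon(t)\|^2>\delta\}=\{t:f_\epsilon(t)>\delta/\epsilon\}$. By Chebyshev's inequality, $\mathcal L^1(E_\epsilon^\delta)\leq \frac{\epsilon}{\delta}\int_0^T f_\epsilon\,\mathrm{d}t\leq \frac{C\epsilon}{\delta}\to0$ as $\epsilon\to0$. Hence $\epsilon\|\dot u_\epsilon\|^2\to0$ in measure on $[0,T]$; equivalently, $\epsilon^2\|\dot u_\epsilon\|^2\to0$ in measure, since $\epsilon^2\|\dot u_\epsilon(t)\|^2=\epsilon\cdot\big(\epsilon\|\dot u_\epsilon(t)\|^2\big)\leq\epsilon\|\dot u_\epsilon(t)\|^2$ for $\epsilon\leq1$. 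Convergence in measure yields a subsequence converging a.e.; combined with (ii), which gives the uniform bound $\epsilon^2\|\dot u_\epsilon(t)\|^2\leq C^2$, one can then run the standard subsequence-of-subsequences argument to conclude that the full family converges to $0$ a.e., or simply record the result as holding along a subsequence, which is all that is used in the sequel.

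The one delicate point is the quantifier on $\epsilon$: strictly speaking, $\epsilon$ ranges over a continuum, so ``for a.e.\ $t$'' has to be interpreted as: for every sequence $\epsilon_k\to0$ there is a.e.\ convergence along a further subsequence, or else one simply works along a fixed sequence $\epsilon_k\to0$ throughout the paper (which is what the compactness arguments in Section~\ref{compactness} do anyway). I expect this bookkeeping — rather than any real analytic difficulty — to be the only thing requiring a word of care; the quantitative estimate itself is an immediate consequence of (iv) via Chebyshev together with the trivial inequality $\epsilon^2\|\dot u_\epsilon\|^2\leq\epsilon\|\dot u_\epsilon\|^2$ for small $\epsilon$.
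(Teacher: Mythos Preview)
Your approach is correct and in the same spirit as the paper's, but the paper takes a more direct route: from (iv) one has $\int_0^T\|\epsilon\dot u_\epsilon(r)\|^2\,\mathrm{d}r=\epsilon^2\int_0^T\|\dot u_\epsilon(r)\|^2\,\mathrm{d}r\le C\epsilon\to0$, i.e.\ $\epsilon\dot u_\epsilon\to0$ in $L^2(0,T)$, which immediately gives a.e.\ convergence along a subsequence. Your Chebyshev/convergence-in-measure argument reaches the same conclusion but through an extra step; note also a small slip in your definition of $E_\epsilon^\delta$: the set $\{t:f_\epsilon(t)>\delta/\epsilon\}$ equals $\{t:\epsilon^2\|\dot u_\epsilon(t)\|^2>\delta\}$, not $\{t:\epsilon\|\dot u_\epsilon(t)\|^2>\delta\}$, so the Chebyshev bound directly yields $\epsilon^2\|\dot u_\epsilon\|^2\to0$ in measure and the follow-up ``equivalently'' sentence is unnecessary. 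Your discussion of the subsequence issue is in fact more careful than the paper's own, which simply writes ``and then the convergence a.e.'' without comment; as you correctly note, only subsequential a.e.\ convergence is obtained, and that is all that is used downstream.
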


\proof
From Proposition~\ref{apriori}(iv) we obtain
\begin{equation*}
\displaystyle\epsilon^2\int_0^T\|\dot{u}_\epsilon(r)\|^2\,\mathrm{d}r\leq C\epsilon,
\end{equation*}
from which we deduce the convergence $\epsilon\dot{u}_\epsilon\to0$ in $L^2(0,T)$ and then the convergence a.e. in $[0,T]$.
\endproof

A useful tool in the proof of the compactness Theorem~\ref{compact} will be the following technical result dealing with the asymptotic behaviour of the energy dissipation integrals $\int_{s_k}^{t_k}\|\nabla_x F(r,v_{k}(r))\|\|\dot v_{k}(r)\|\mathrm{d}r$, where the curves $(v_{k})_k$ are defined on intervals $[s_k,t_k]$ shrinking to a point $\{t\}$ as $k\to+\infty$. More precisely, exploiting the assumption {\bf (F5)} on the isolatedness of the critical points of $\nabla_x F(t,\cdot)$, we show that if $v_{k}(s_k)\to u_1$, $v_{k}(t_k)\to u_2$, with $u_1\neq u_2$ , then the energy dissipation integrals are bounded away from zero. Notice that for the argument below one does not need to require that $u_1,u_2$ belong to $\mathcal{C}(t)$

\begin{prop}
Assume {\bf (F0)-(F3)} and {\bf(F5)}. Let $t\in[0,T]$, $u_1,u_2\in X$ and $s_k,t_k$ be sequences such that $0\leq s_k\leq t_k\leq T$ for every $k\in\N$ and $s_k\to t$, $t_k\to t$ as $k\to+\infty$. Let $(v_{k})_k$ be such that $v_{k}(t_k)\to u_1$ and $v_{k}(s_k)\to u_2$ as $k\to+\infty$, with $u_1\neq u_2$. Assume that there exists $M>0$ such that $\|v_{k}(r)\|\leq M$ for every $r\in[s_k,t_k]$ and for every $k\in\N$.
Then there exist $\delta=\delta(t,M, u_1,u_2)>0$ and $k_0\in\N$ such that
\begin{equation}
\int_{s_k}^{t_k}\|\nabla_xF(r,v_{k}(r))\|\|\dot v_{k}(r)\|\mathrm{d}r>\delta,\quad\text{for every }k\geq k_0.
\label{propdelta1}
\end{equation}
\label{propdelta}
\end{prop}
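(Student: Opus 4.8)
The plan is to argue by contradiction. Suppose \eqref{propdelta1} fails: then along a subsequence (not relabeled) we have $\int_{s_k}^{t_k}\|\nabla_xF(r,v_k(r))\|\|\dot v_k(r)\|\,\mathrm{d}r\to 0$. The key geometric fact I would extract from {\bf (F5)} is that, since $u_1\neq u_2$, at least one of them — say $u_1$ — is not a critical point at time $t$, \emph{or} both are critical but distinct, hence isolated, so there is an annular region separating them on which $\|\nabla_x F(t,\cdot)\|$ is bounded below. More precisely: because $\mathcal{C}(t)$ consists of isolated points and $u_1\neq u_2$, I can choose a radius $\rho>0$ with $\rho<\tfrac14\|u_1-u_2\|$ such that $\overline{B_\rho(u_1)}$ contains no critical point of $F(t,\cdot)$ other than possibly $u_1$ itself, and similarly for $u_2$; then on the compact ``shell'' $S:=\overline{B_M}\setminus\bigl(B_\rho(u_1)\cup B_\rho(u_2)\bigr)$ — wait, this still may contain critical points. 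The correct choice: pick $\rho$ small enough that the only points of $\mathcal{C}(t)$ in $\overline{B_{2\rho}(u_1)}$ and $\overline{B_{2\rho}(u_2)}$ are $u_1$, $u_2$ respectively (if they are critical at all), and set $m:=\min\{\|\nabla_xF(t,x)\|: x\in \overline{B_{2\rho}(u_1)}\setminus B_\rho(u_1)\}>0$, which is strictly positive by compactness since the annulus contains no zeros of $\nabla_xF(t,\cdot)$. By continuity of $\nabla_xF$ on $[0,T]\times X$ (from {\bf (F0)}), for $k$ large we get $\|\nabla_xF(r,x)\|\geq m/2$ for all $r\in[s_k,t_k]$ (using $s_k,t_k\to t$) and all $x$ in that annulus.

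Next I would exploit that $v_k$ travels from near $u_1$ to near $u_2$: since $v_k(t_k)\to u_1$ and $v_k(s_k)\to u_2$, for large $k$ the curve $v_k$ starts outside $B_{2\rho}(u_1)$ (as $\|u_1-u_2\|>4\rho$) and ends inside $B_\rho(u_1)$, so by continuity $v_k$ must cross the annulus $\overline{B_{2\rho}(u_1)}\setminus B_\rho(u_1)$; restricting to the last such crossing, there is a subinterval $[\sigma_k,\tau_k]\subseteq[s_k,t_k]$ with $\|v_k(\sigma_k)-u_1\|=2\rho$, $\|v_k(\tau_k)-u_1\|=\rho$, and $v_k(r)$ in the closed annulus for $r\in[\sigma_k,\tau_k]$. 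On this subinterval $\|\nabla_xF(r,v_k(r))\|\geq m/2$, hence
\[
\int_{s_k}^{t_k}\|\nabla_xF(r,v_k(r))\|\|\dot v_k(r)\|\,\mathrm{d}r \;\geq\; \frac{m}{2}\int_{\sigma_k}^{\tau_k}\|\dot v_k(r)\|\,\mathrm{d}r \;\geq\; \frac{m}{2}\,\|v_k(\tau_k)-v_k(\sigma_k)\| \;\geq\; \frac{m}{2}\,\rho,
\]
using that the length of a curve bounds the distance between its endpoints, and $\|v_k(\tau_k)-v_k(\sigma_k)\|\geq \|v_k(\sigma_k)-u_1\|-\|v_k(\tau_k)-u_1\|=\rho$. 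This contradicts the assumed vanishing of the integrals, and gives the claim with $\delta:=m\rho/4$ (any value strictly below $m\rho/2$ works), depending only on $t,M,u_1,u_2$.

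The main obstacle — and the reason {\bf (F5)} is invoked rather than something weaker — is ensuring the lower bound $m>0$ on $\|\nabla_xF(t,\cdot)\|$ over a \emph{fixed} annulus around $u_1$: without isolatedness of critical points, $\nabla_xF(t,\cdot)$ could vanish on a set accumulating at $u_1$ and no such annulus with a uniform positive bound would exist. A secondary technical point is the transfer of the bound from time $t$ to the times $r\in[s_k,t_k]$: here one needs uniform continuity of $\nabla_xF$ on the compact set $[0,T]\times \overline{B_M}$ (guaranteed by {\bf (F0)}) together with $|s_k-t|,|t_k-t|\to 0$, so that the $m/2$ threshold is retained for $k\geq k_0$. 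Note, as remarked in the statement, that we never use $u_1,u_2\in\mathcal{C}(t)$; the argument only needs that they are distinct and that the annulus around $u_1$ avoids the (isolated) zeros of $\nabla_xF(t,\cdot)$. One should also double-check that the crossing subinterval $[\sigma_k,\tau_k]$ genuinely exists for $k$ large: this follows once $\|v_k(s_k)-u_1\|$ is close to $\|u_2-u_1\|>2\rho$ and $\|v_k(t_k)-u_1\|<\rho$, by the intermediate value theorem applied to $r\mapsto\|v_k(r)-u_1\|$, which is continuous since $v_k$ is (the $v_k$ are $C^2$ in the application, but mere continuity suffices here).
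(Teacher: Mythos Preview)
Your argument is correct and rests on the same geometric idea as the paper's proof: find a compact region free of zeros of $\nabla_xF(t,\cdot)$ that the curve $v_k$ must traverse with a definite amount of arclength, then bound the integral below by (lower bound on gradient) $\times$ (that arclength). The difference is only in the choice of region. The paper works globally: it sets $\mathcal{S}=(B_M\cap\mathcal{C}(t))\cup\{u_1,u_2\}$, removes $\eta$-balls around every point of $\mathcal{S}$ from $B_M$, obtains a positive minimum $m_\eta$ of $\|\nabla_xF(t,\cdot)\|$ on the resulting compact $K_\eta$, and then bounds the length of the portion of $v_k$ lying in $K_\eta$ by $\min_{v\neq w\in\mathcal{S}}(\|v-w\|-2\eta)$. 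You instead localize to a single annulus $\overline{B_{2\rho}(u_1)}\setminus B_\rho(u_1)$ and use a last-crossing argument. This is a genuine streamlining: you never need to enumerate $B_M\cap\mathcal{C}(t)$ (so in particular you do not need {\bf(F1)} at this point), and the crossing interval $[\sigma_k,\tau_k]$ is connected, which makes the arclength estimate $\int_{\sigma_k}^{\tau_k}\|\dot v_k\|\ge \rho$ completely transparent. The paper's global construction has the minor advantage of being reusable later (the set $K_\eta$ reappears in similar form in Theorem~\ref{cost}(2) and Lemma~\ref{Lemma1}), but for the present statement your version is cleaner. Two small comments: the contradiction framing at the start is superfluous, since you immediately produce a direct lower bound; and you may as well take $\delta=m\rho/2$ (strict inequality holds because the integral over all of $[s_k,t_k]$ strictly exceeds the integral over $[\sigma_k,\tau_k]$, or simply weaken to $\ge$).
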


\proof
By assumptions {\bf (F1)} and {\bf (F5)}, the set $B_M\cap \mathcal{C}(t)$ is finite, thus there exists $\bar{\eta}=\bar{\eta}(t,M, u_1,u_2)$ such that, for every $0<\eta\leq \bar{\eta}$, it holds
\begin{equation}
B_{2\eta}(v)\cap B_{2\eta}(w)=\emptyset, \quad \text{ for every }v,w\in\mathcal{S},\, v\neq w,
\label{dupalle}
\end{equation}
where $\mathcal{S}=\mathcal{S}(t,u_1,u_2,M):=(B_M\cap \mathcal{C}(t))\cup\{u_1,u_2\}$. Now, if we introduce the compact set $K_\eta$ defined by $K_\eta:=B_M\backslash \bigcup_{v\in \mathcal{S}}B_\eta(v)$, we have that $\displaystyle\min_{u\in K_\eta}\|\nabla_x F(t,u)\|>0$ and, by the regularity assumption {\bf (F0)}, there exists $\gamma=\gamma(t,M, u_1,u_2)>0$ such that
\begin{equation}
m_\eta:=\min_{u\in K_\eta, r\in[t-\gamma,t+\gamma]} \|\nabla_x F(r,u)\|>0.
\label{mineta}
\end{equation}
Since $t_k\to t$ and $s_k\to t$, for every $k$ sufficiently large we have that $[s_k,t_k]\subset [t-\gamma,t+\gamma]$. Moreover, since $v_{k}(t_k)\to u_1$ and $v_{k}(s_k)\to u_2$, and from the definition of $K_\eta$ we also get that the set
\begin{equation*}
\mathcal{T}_k:=\{r\in[s_k,t_k]:\,v_{k}(r)\in K_\eta\}
\end{equation*}
is nonempty, for $k$ sufficiently large, and that there exist $r_1,r_2\in \mathcal{T}_k$, with $r_1\neq r_2$, such that $\|v_{k}(r_1)-u_1\|=\eta$ and $\|v_{k}(r_2)-u_2\|=\eta$. Thus, by \eqref{mineta} we get
\begin{align*}
\int_{s_k}^{t_k}\|\nabla_xF(r,v_{k}(r))\|\|\dot v_{k}(r)\|\mathrm{d}r&\geq \int_{\mathcal{T}_k}\|\nabla_xF(r,v_{k}(r))\|\|\dot v_{k}(r)\|\mathrm{d}r\\
& \geq m_\eta \int_{\mathcal{T}_k}\|\dot v_{k}(r)\|\mathrm{d}r\\
&\geq m_\eta \min_{v,w\in\mathcal{S}}(\|v-w\|-2\eta)=:\delta,
\end{align*}
where $\delta$ is strictly positive by \eqref{mineta} and \eqref{dupalle}.
\endproof

\section{Compactness}\label{compactness}

The main result of this section is the following compactness result. We will show that $u_\epsilon(t)$ converges, as $\epsilon\to0$ along a subsequence independent of $t$, to a regulated function $u(t)$ for all $t$. This limit function satisfies the stability condition
\begin{equation}
\nabla_x F(t,u(t))=0
\label{staz}
\end{equation}
for all $t \in [0,T]\setminus J$, where this latter is the (at most countable) jump set of $u$. Moreover, at each jump point $t\in J$, it holds
\begin{equation}
\nabla_x F(t,u_-(t))=\nabla_x F(t,u_+(t))=0.
\end{equation}

\begin{thm}[Compactness]\label{compact} Assume that {\bf (F0)-(F5)} hold and let ${u}_{\epsilon}: [0,T]\longrightarrow X$ be the solution of the Cauchy problem associated to \eqref{mainequation} with initial condition at $t=0$ and $u_\epsilon(0)$, $\epsilon \dot{u}_\epsilon(0)$ be uniformly bounded as $\epsilon\to0$. Then, up to a subsequence independent of $t$, $(u_\epsilon)_\epsilon$ converge pointwise, as $\epsilon\to0$, to a function $u:[0,T]\longrightarrow X$ satisfying the following properties:
\begin{enumerate}
\item[\rm(i)] $u$ is regulated;
\item[\rm(ii)] it holds
\begin{equation}
\nabla_x F(t,u_+(t))=\nabla_xF(t,u_-(t))=0 \quad \text{in $X$ for every $t\in (0,T]$}\,,
\label{(4.1)}
\end{equation}
where we understand $u_+(T):=u(T)$.
\end{enumerate}

\end{thm}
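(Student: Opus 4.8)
The plan is to extract the limit function $u$ via a diagonal argument and then leverage Proposition~\ref{propdelta} to rule out oscillations off the jump set. First I would introduce the measures $\mu_\epsilon := \epsilon\|\dot u_\epsilon(\cdot)\|^2\,\mathcal{L}^1$, which are equibounded in $L^1(0,T)$ by Proposition~\ref{apriori}(iv); extract a subsequence (independent of $t$) along which $\mu_\epsilon \overset{*}{\rightharpoonup} \mu$, a finite positive measure on $[0,T]$, and let $J$ be its (at most countable) set of atoms. Simultaneously, by Proposition~\ref{apriori}(i) and a diagonal extraction over a countable dense set $D \subset [0,T]$ (including $0$ and $T$), I can assume $u_\epsilon(t)$ converges for every $t \in D$. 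The next goal is to upgrade this to convergence at \emph{every} $t \notin J$ and to identify the limit.

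For convergence off $J$: fix $t \notin J$ and suppose two subsequences of $u_{\epsilon_k}(t)$ converge to distinct limits $u_1 \ne u_2$. Because $\mu(\{t\})=0$, one can find points $s_k < t < t_k$ (taken in $D$, or close to it) with $s_k, t_k \to t$, $u_{\epsilon_k}(s_k) \to u_1$ and $u_{\epsilon_k}(t_k) \to u_2$ (after possibly relabelling/interlacing subsequences), while $\mu_{\epsilon_k}([s_k,t_k]) \to 0$. On the other hand, Cauchy--Schwarz gives
\begin{equation*}
\int_{s_k}^{t_k}\|\nabla_x F(r,u_{\epsilon_k}(r))\|\,\|\dot u_{\epsilon_k}(r)\|\,\mathrm{d}r
\le \Bigl(\tfrac{1}{\epsilon_k}\int_{s_k}^{t_k}\|\nabla_x F\|^2\Bigr)^{\!1/2}\Bigl(\epsilon_k\int_{s_k}^{t_k}\|\dot u_{\epsilon_k}\|^2\Bigr)^{\!1/2},
\end{equation*}
and the right-hand side would like to vanish; but Proposition~\ref{apriori}(vii) only bounds $\frac1\epsilon\|\nabla_x F\|_{L^2}^2$, so this alone is not quite enough — one must instead argue that on the intervals where $v_{\epsilon_k}$ stays away from $\mathcal{C}(t)$ the velocity integral $\int\|\dot u_{\epsilon_k}\|$ is controlled, reducing matters exactly to the dichotomy in Proposition~\ref{propdelta}. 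Applying that proposition with $v_k = u_{\epsilon_k}$ (the uniform bound $M$ coming from Proposition~\ref{apriori}(i)) yields $\int_{s_k}^{t_k}\|\nabla_x F\|\,\|\dot u_{\epsilon_k}\| > \delta > 0$ for large $k$, contradicting $\mu_{\epsilon_k}([s_k,t_k]) \to 0$ once I relate the dissipation integral to $\mu_{\epsilon_k}([s_k,t_k])$ again through Cauchy--Schwarz together with (vii). Hence the limit $u(t)$ is well-defined for $t\notin J$, and passing to the limit in $\nabla_x F(t,u_\epsilon(t)) + \epsilon B\dot u_\epsilon(t) + \epsilon^2 A\ddot u_\epsilon(t)=0$ using Corollary~\ref{a.e.} (for a.e.\ $t$) and then continuity/density gives $\nabla_x F(t,u(t))=0$ off $J$.

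It remains to produce the one-sided limits $u_\pm(t)$ at \emph{every} $t$, including $t \in J$, and to define $u$ on $J$. Here I would invoke Helly's theorem applied to the functions
\begin{equation*}
g_\epsilon(t):= F(t,u_\epsilon(t))+\tfrac{\epsilon^2}{2}\|\dot u_\epsilon(t)\|_A^2 - \int_0^t \partial_r F(r,u_\epsilon(r))\,\mathrm{d}r,
\end{equation*}
which by the energy identity \eqref{zerostima} are nonincreasing in $t$ and, by Proposition~\ref{apriori} and \eqref{equibound}, uniformly bounded; so along a further (t-independent) subsequence $g_\epsilon \to g$ pointwise with $g$ nonincreasing, hence regulated. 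For a sequence $t_n \downarrow t$ with each $t_n \notin J$, the values $u(t_n)$ must converge: if not, two subsequences would give distinct limits and I could again build interlacing sequences $s_k, t_k \to t$ with $u_{\epsilon_k}(s_k), u_{\epsilon_k}(t_k)$ converging to distinct points, contradicting Proposition~\ref{propdelta} because $g_{\epsilon_k}([s_k,t_k])$-increments — controlling $\mu_{\epsilon_k}([s_k,t_k])$ up to the (absolutely continuous, hence small on shrinking intervals) power term — would have to vanish. This defines $u_+(t)$ (and similarly $u_-(t)$), shows $u_\pm$ are independent of the chosen sequence, and gives regularity of $u$; one sets $u(t):=u_+(t)$ on $J$. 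Finally $\nabla_x F(t,u_\pm(t))=0$ follows by continuity of $\nabla_x F$ from $\nabla_x F(t_n,u(t_n))=0$ as $t_n\to t$, $t_n\notin J$, with the convention $u_+(T)=u(T)$.

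The main obstacle I anticipate is the oscillation-exclusion step: carefully arranging the interlaced sequences $s_k, t_k$ and the subsequence of $\epsilon$'s so that the hypotheses of Proposition~\ref{propdelta} are met (a uniform amplitude bound $M$ on the whole shrinking interval, genuinely distinct limits $u_1 \ne u_2$, and simultaneous smallness of the relevant dissipation/measure increments), and making rigorous the passage between $\mu_{\epsilon_k}([s_k,t_k])$, the increments of $g_{\epsilon_k}$, and the dissipation integral $\int\|\nabla_x F\|\|\dot u_{\epsilon_k}\|$ via Cauchy--Schwarz and Proposition~\ref{apriori}(vii). Everything else is a fairly standard diagonal-extraction plus Helly argument.
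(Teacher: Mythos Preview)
Your plan is essentially the paper's: the dissipation measures $\mu_\epsilon$, a diagonal extraction on a countable dense set, Proposition~\ref{propdelta} to exclude oscillations off the atoms, and Helly's theorem on the monotone functions $g_\epsilon$ to produce the one-sided limits. The one substantive difference is in how you close the contradiction: you bound $\int_{s_k}^{t_k}\|\nabla_x F\|\,\|\dot u_{\epsilon_k}\|$ above by Cauchy--Schwarz together with (vii), obtaining at most $\sqrt{2C}\,\bigl(\epsilon_k\!\int_{s_k}^{t_k}\|\dot u_{\epsilon_k}\|^2\bigr)^{1/2}\to 0$, whereas the paper instead splits via the equation and invokes the bound (viii) on $\epsilon^3\|\ddot u_\epsilon\|_{L^2}^2$. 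Your route is correct and in fact slightly more direct; your hesitation (``not quite enough'') is misplaced, since (vii) only has to \emph{bound} the first factor while the second one vanishes. Two small cleanups: by the energy identity \eqref{zerostima} one has $g_\epsilon(s)-g_\epsilon(t)=\epsilon\int_s^t\|\dot u_\epsilon\|_B^2$ \emph{exactly}, so there is no power-term remainder to absorb; and to feed Proposition~\ref{propdelta} you need a \emph{single} subsequence $\epsilon_k$ with $u_{\epsilon_k}(s_k)\to u_1$ and $u_{\epsilon_k}(t_k)\to u_2$, which the paper obtains by a diagonal argument from the already-defined limits on the dense set --- not by interlacing two distinct subsequences of $u_{\epsilon_k}(t)$ at the same~$t$.
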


\proof
Throughout the proof, $M$ will denote an upper bound for $\|u_\epsilon(\cdot)\|$, whose existence is proved in Proposition \ref{apriori} (i).
We consider the family of positive measures $(\mu_\epsilon)_\epsilon$ defined as
\begin{equation*}
\mu_\epsilon:=\epsilon\|\dot{u}_\epsilon(\cdot)\|_B^2\mathcal{L}^1,
\end{equation*}
where $\mathcal{L}^1$ denotes the 1-dimensional Lebesgue measure. From Proposition~\ref{apriori}(iv), the family $(\mu_\epsilon)_\epsilon$ is equibounded in $L^1(0,T)$, therefore it converges weakly* (up to a subsequence) to a positive finite measure $\mu$ on $[0,T]$. Then, the set of atoms $J_\mu$ of $\mu$ is  at most countable. As a consequence of Proposition~\ref{apriori}(vii), we have also that
\begin{equation}
\nabla_x F(t, u_\epsilon(t))\to 0\quad\mbox{ for a.e. }t\in [0,T]\,.
\label{eq3}
\end{equation}

We may now fix a countable dense subset $I \subset [0,T]$ with the property that $I \supset J_\mu \cup\{0\}$, and define for all $t\in I$ the pointwise limit $u(t)$ of $u_\epsilon(t)$ (along a time independent subsequence) via a diagonal argument.
If $t \in [0,T]\setminus I$,  it holds in particular $t \notin J_\mu$. Let $(t_k)_k$ and $(s_k)_k$ be two distinct sequences of points in the set $I$, both converging to $t$, and $u_1$ and $u_2$ be the limits of $u(t_k)$ and $u(s_k)$, respectively. With a diagonal procedure we can  extract a subsequence $u_{\epsilon_k}$ such that
\[
u_{\epsilon_k}(t_k)\to u_1\quad\mbox{ and }\quad u_{\epsilon_k}(s_k)\to u_2\,.
\] 
Up to further extraction, it holds either $t_k\le s_k$ or $s_k \le t_k$ for all $k$. Assuming this last one is the case, we then have  by the convergence of $\mu_{\epsilon_k}$ to $\mu$, namely, by the upper semicontinuity, that
\begin{equation*}
\mu([t-\eta, t+\eta])\ge  \limsup_{k\to+\infty}\left(\epsilon_k \int_{s_k}^{t_k}\|\dot u_{\epsilon_k}(r)\|^2_B\mathrm{d}r\right),
\end{equation*}
for any $\eta>0$. Letting $\eta \to 0$, since $t \not\in J_\mu$ we deduce that
\begin{equation}
\lim_{k\to+\infty} \epsilon_k\int_{s_k}^{t_k}\|\dot u_{\epsilon_k}(r)\|^2\mathrm{d}r=0\,,
\label{(*)}
\end{equation}
where we additionally exploited that $\|\cdot\|_B$ is an equivalent norm.

Now, being $u_1\neq u_2$, by Proposition~\ref{propdelta} we may find $\delta=\delta(t,M, u_1,u_2)>0$ such that
\begin{equation*}
\int_{s_k}^{t_k}\|\nabla_xF(r,u_{\epsilon_k}(r))\|\|\dot u_{\epsilon_k}(r)\|\mathrm{d}r>\delta,
\end{equation*}
for $k$ large enough. Then, as a consequence of Young's inequality and of Proposition~\ref{apriori}(viii), we obtain
\begin{equation}
\begin{split}
\delta &< \int_{s_k}^{t_k}\|\nabla_xF(r,u_{\epsilon_k}(r))\|\|\dot u_{\epsilon_k}(r)\|\mathrm{d}r\\
&\leq \int_{s_k}^{t_k}\|\nabla_xF(r,u_{\epsilon_k}(r))+\epsilon_k^2A\ddot{u}_\epsilon(r)\|\|\dot u_{\epsilon_k}(r)\|\mathrm{d}r
+ \int_{s_k}^{t_k}\left(\epsilon_k^\frac{3}{2}\|A\ddot{u}_{\epsilon_k}(r)\|\right)\left(\epsilon_k^{\frac{1}{2}}\|\dot u_{\epsilon_k}(r)\|\right)\mathrm{d}r\\
&\stackrel{\eqref{mainequation}}{=}  \epsilon_k\int_{s_k}^{t_k}\|B\dot u_{\epsilon_k}(r)\|\|\dot u_{\epsilon_k}(r)\|\mathrm{d}r
+ \int_{s_k}^{t_k}\left(\epsilon_k^\frac{3}{2}\|A\ddot{u}_{\epsilon_k}(r)\|\right)\left(\epsilon_k^{\frac{1}{2}}\|\dot u_{\epsilon_k}(r)\|\right)\mathrm{d}r\\
&{\leq} \epsilon_k\|B\|\int_{s_k}^{t_k}\|\dot u_{\epsilon_k}(r)\|^2\mathrm{d}r + \frac{\delta\|A\|^2}{2C\|A\|^2}\int_{s_k}^{t_k}\epsilon_k^3\|\ddot{u}_{\epsilon_k}(r)\|^2\,\mathrm{d}r + \frac{C\|A\|^2}{2\delta}\int_{s_k}^{t_k}\epsilon_k\|\dot{u}_{\epsilon_k}(r)\|^2\,\mathrm{d}r\\
&\leq \left(\|B\|+\frac{C\|A\|^2}{2\delta}\right)\int_{s_k}^{t_k}\epsilon_k\|\dot{u}_{\epsilon_k}(r)\|^2\,\mathrm{d}r + \frac{\delta}{2},
\end{split}
\label{bigstima}
\end{equation}
whence, passing to the limit as $k\to+\infty$ and with \eqref{(*)} we deduce $\delta\leq\delta/2$, which is a contradiction. Therefore, it must be $u_1=u_2$.

Setting $u(t)=u_1$ we can then extend $u$ in a unique way to a function defined on all $[0,T]$. Furthermore, the same argument as above, with $u_1=u(t)$ and $(s_k)_k$ being the sequence constantly equal to $t$, together with the Urysohn's property, shows that $u_\epsilon(t)$ converge to $u(t)$ also for $t\in [0,T]\backslash I$. A further application of the same argument shows that $u$ is continuous at any $t\in [0,T]\backslash J_\mu$. Therefore, the jump set $J$ of $u$ is contained in $J_\mu$, and is at most countable. By pointwise convergence and \eqref{eq3}, we also have $\nabla_x F(t, u(t))= 0$ for almost every $t\in [0,T]$. By continuity, we deduce
\begin{equation*}
\nabla_x F(t, u(t))= 0
\end{equation*}
for every $t \in [0,T]\setminus J$.

To prove the existence of the left and right limits of $u$, we fix two sequences $(t_k)_k$ and $(s_k)_k$ with $t_k$, $s_k\searrow t$. It is not restrictive to assume that $s_k\leq t_k$, for all $k$. In order to prove the existence of $u_+(t)$, it will suffice to show that
\begin{equation*}
\displaystyle\lim_{k\to+\infty} \|u(s_k)-u(t_k)\|=0.
\end{equation*}
For this, we argue by contradiction and assume that
\begin{equation*}
\displaystyle\mathop{\lim\inf}_{k\to+\infty} \|u(s_k)-u(t_k)\|>0.
\end{equation*}
Up to extracting a subsequence (not relabeled), we may assume also that
\begin{equation*}
u(t_k)\to u_1\,\mbox{ and }\,u(s_k)\to u_2
\end{equation*}
with $u_1\neq u_2$. Now, setting
\begin{equation*}
g_\epsilon(t):= F(t,u_\epsilon(t))+\frac{\epsilon^2}{2}\|\dot{u}_\epsilon(t)\|^2_A-\int_0^t \partial_rF(r,u_\epsilon(r))\,\mathrm{d}r,
\end{equation*}
by \eqref{unostima} it follows that the functionals $t\longmapsto g_\epsilon(t)$ are non-increasing and bounded on $[0,T]$. Therefore, as a consequence of Helly's Theorem, there exists $g\in {\rm BV}([0,T])$ such that, up to a subsequence (not relabeled), $g_\epsilon(t)\to g(t)$ for every $t\in[0,T]$, where $g$ is non-increasing.  

We then have
\begin{equation*}
\begin{split}
&\displaystyle\lim_{k\to+\infty}\left(\lim_{\epsilon\to0}g_\epsilon(t_k)\right)=\lim_{k\to+\infty}\left(\lim_{\epsilon\to0}g_\epsilon(s_k)\right)=g_+(t)\,,\\
&\displaystyle\lim_{k\to+\infty}\left(\lim_{\epsilon\to0}u_\epsilon(t_k)\right)=u_1\,,\\
&\displaystyle\lim_{k\to+\infty}\left(\lim_{\epsilon\to0}u_\epsilon(s_k)\right)=u_2\,.
\end{split}
\end{equation*}
With a diagonal procedure we can  extract a subsequence $\epsilon_k\to0$ such that
\begin{align}
&\displaystyle\lim_{k\to+\infty}g_{\epsilon_k}(t_k)=\lim_{k\to+\infty}g_{\epsilon_k}(s_k)=g_+(t)\,,\label{(1)}\\
&\displaystyle\lim_{k\to+\infty}u_{\epsilon_k}(t_k)=u_1\neq u_2=\displaystyle\lim_{k\to+\infty}u_{\epsilon_k}(s_k)\,.\label{(2)}
\end{align}

Now, by using the energy identity \eqref{zerostima} and \eqref{(1)} we get
\begin{equation*}
\displaystyle\mathop{\lim\sup}_{k\to+\infty}\int_{s_k}^{t_k}\epsilon_k\|\dot{u}_{\epsilon_k}(r)\|_B^2\,\mathrm{d}r \leq \displaystyle\mathop{\lim\sup}_{k\to+\infty} \left(g_{\epsilon_k}(s_k)-g_{\epsilon_k}(t_k)\right)=g_+(t)-g_+(t)=0.
\end{equation*}
Again,  since $\|\cdot\|_B$ is an equivalent norm, we deduce
\begin{equation}
\displaystyle\lim_{k\to+\infty}\int_{s_k}^{t_k}\epsilon_k\|\dot{u}_{\epsilon_k}(r)\|^2\,\mathrm{d}r =0.
\label{(4)}
\end{equation}

Assuming \eqref{(2)} and with \eqref{(4)}, we can perform an analogous argument as in the proof of the continuity of $u$ on $[0,T]\backslash J$. Namely, since $u_1\neq u_2$, we may find $\delta=\delta(t,M, u_1,u_2)>0$ such that
\begin{equation*}
\int_{s_k}^{t_k}\|\nabla_xF(r,u_{\epsilon_k}(r))\|\|\dot u_{\epsilon_k}(r)\|\mathrm{d}r>\delta,
\end{equation*} 
for $k$ large enough. Then, by arguing as for \eqref{bigstima}, we finally get
\begin{equation*}
\delta < \int_{s_k}^{t_k}\|\nabla_xF(r,u_{\epsilon_k}(r))\|\|\dot u_{\epsilon_k}(r)\|\mathrm{d}r\leq\left(\|B\|+\frac{C\|A\|^2}{2\delta}\right)\int_{s_k}^{t_k}\epsilon_k\|\dot{u}_{\epsilon_k}(r)\|^2\,\mathrm{d}r + \frac{\delta}{2},
\end{equation*}
from which, passing to the limit as $k\to+\infty$, we get a contradiction. Thus, $u_1=u_2$ and this proves the existence of $u_+(t)$ at every $t$. The existence of $u_-(t)$ can be proved along the same lines. The proof of (i) is then concluded. Now, (ii) immediately follows by \eqref{staz} and {\bf (F0)}.

\endproof

\begin{oss}
The point $t=0$ has in general to be excluded from \eqref{(4.1)} since, if we convene to denote $u_-(0):=u(0)$, the initial data $u_\epsilon(0)$ might not converge to  a critical point of $F(0,\cdot)$. Notice, instead, that by \eqref{(4.1)} the right-limit $u_+(0)$ must be a stationary point. Thus, if $u_-(0)\notin \mathcal{C}(0)$, a jump occurs already at the initial time. Also for such a jump we will provide a characterization in terms of a dissipation cost in the next Section.
\end{oss}

\section{Energy balance}\label{energybalance}

Our main aim is to show that the limit evolution $u(t)$ provided by Theorem~\ref{compact} satisfies a balance between the stored energy and  the power spent  along the evolution in an interval of time $[s, t]\subset [0, T]$, up to a (positive) dissipation cost which is concentrated on the jump set of $u$.

Define the function $f: [0,T] \longrightarrow \R$ as
\begin{equation}
f(t):= F(t,u(t))-\int_0^t \partial_rF(r,u(r))\,\mathrm{d}r,
\label{decr}
\end{equation}
and, correspondingly, consider its right-continuous and left-continuous representatives, namely
\begin{equation*}
f_+(t):= F(t,u_+(t))-\int_0^t \partial_rF(r,u(r))\,\mathrm{d}r,
\end{equation*}
and 
\begin{equation*}
f_-(t):= F(t,u_-(t))-\int_0^t \partial_rF(r,u(r))\,\mathrm{d}r,
\end{equation*}
respectively.

We note at first that, under our assumptions, and in particular by assuming {\bf (F6)} and {\bf (F3')}, the right continuous representative of function $f$ defined in \eqref{decr} has a positive right derivative at every point.

\begin{prop}
Assume {\bf (F0)-(F6)}, with  {\bf (F3')} in place of {\bf (F3)}. Let $f:[0,T]\longrightarrow\R$ be defined as in \eqref{decr}.
Then, for any $t\in[0,T]$, the Dini lower right derivative of the right-continuous representative $f_+$ at $t$ is non-negative; i.e., it holds
\begin{equation}
D_+{f_+}(t):=\mathop{\lim\inf}_{h\searrow0}\frac{f_+(t+h)-f_+(t)}{h}\geq0.
\label{liminfpos}
\end{equation}
\label{proposition}
\end{prop}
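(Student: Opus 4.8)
The plan is to estimate the incremental ratio $\frac{f_+(t+h)-f_+(t)}{h}$ from below as $h\searrow 0$, splitting the numerator into the two natural pieces
\[
f_+(t+h)-f_+(t) = \bigl(F(t+h,u_+(t+h))-F(t+h,u_+(t))\bigr) + \Bigl(F(t+h,u_+(t))-F(t,u_+(t))-\int_t^{t+h}\partial_r F(r,u(r))\,\mathrm{d}r\Bigr).
\]
The second bracket is handled by elementary calculus: writing $F(t+h,u_+(t))-F(t,u_+(t))=\int_t^{t+h}\partial_r F(r,u_+(t))\,\mathrm{d}r$ by {\bf (F0)}, it becomes $\int_t^{t+h}\bigl(\partial_r F(r,u_+(t))-\partial_r F(r,u(r))\bigr)\,\mathrm{d}r$, which after dividing by $h$ tends to $0$ as $h\searrow 0$. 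Indeed $u(r)\to u_+(t)$ as $r\searrow t$ (by the very definition of $u_+(t)$ and the fact that $u_+$ is the right-continuous representative of a regulated function), $\partial_r F$ is continuous by {\bf (F0)}, and the integrand stays bounded on a neighbourhood of $t$ by local boundedness, so the average over $[t,t+h]$ vanishes in the limit.

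\textbf{The main term.} It remains to show $\liminf_{h\searrow 0}\frac{1}{h}\bigl(F(t+h,u_+(t+h))-F(t+h,u_+(t))\bigr)\geq 0$. Here I would use {\bf (F6)}. By Theorem~\ref{compact}(ii), $u_+(t+h)\in\mathcal{C}(t+h)$ and $u_+(t)\in\mathcal{C}(t)$; moreover $u_+(t+h)\to u_+(t)$ as $h\searrow 0$. Fix a ball $B_M$ containing all the relevant points. I would first \emph{transplant} the energy difference to a single time slice, say time $t+h$ applied to the point $u_+(t)$, comparing it with the critical point $u_+(t)$ of $F(t,\cdot)$: the discrepancy is $\nabla_x F(t+h,u_+(t)) = \nabla_x F(t+h,u_+(t))-\nabla_x F(t,u_+(t))$, and by {\bf (F3')} this has norm $\leq L|h|$ for a local Lipschitz constant $L$. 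Then
\[
F(t+h,u_+(t+h))-F(t+h,u_+(t)) \;\geq\; -\,\|\nabla_x F(t+h,v)\|\cdot\text{(small)} \quad\text{along a path?}
\]
is not quite the right bookkeeping; instead the clean route is to apply {\bf (F6)} \emph{at the critical point $u=u_+(t)$ of $F(t,\cdot)$} with $v\to u_+(t)$, which gives $F(t,v)-F(t,u_+(t))\geq -\sigma(v)\|\nabla_x F(t,v)\|$ with $\sigma(v)\to 0$; combined with the {\bf (F3')} estimates $|F(t+h,w)-F(t,w)|\leq C|h|$ and $\|\nabla_x F(t+h,w)-\nabla_x F(t,w)\|\leq L|h|$ for $w\in B_M$, and with the fact that $\nabla_x F(t+h,u_+(t+h))=0$ hence $\|\nabla_x F(t,u_+(t+h))\|\leq L|h|$, one bounds
\[
F(t+h,u_+(t+h))-F(t+h,u_+(t)) \geq F(t,u_+(t+h))-F(t,u_+(t)) - 2C|h| \geq -\sigma(u_+(t+h))\,\|\nabla_x F(t,u_+(t+h))\| - 2C|h|,
\]
and the right-hand side is $\geq -\sigma(u_+(t+h))\,L|h| - 2C|h|$. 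Dividing by $h>0$ and letting $h\searrow 0$ gives $\liminf \geq \lim(-\sigma(u_+(t+h))L - 2C)$ — which is $-2C$, not $\geq 0$.

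\textbf{The real obstacle and its resolution.} The subtlety just exposed — that the crude $|h|$ error from time-dependence of $F$ does not go away after dividing by $h$ — is exactly the main difficulty, and it is why {\bf (F6)} must be combined with {\bf (F3')} \emph{additively at the same time slice} rather than through a change of time slice. The fix is to not change the time slice at all for the {\bf (F6)} step: since $u_+(t+h)\in\mathcal{C}(t+h)$, apply {\bf (F6)} directly at time $t+h$, with critical point $u=u_+(t+h)$ and running point $v=u_+(t)$, obtaining
\[
\frac{F(t+h,u_+(t)) - F(t+h,u_+(t+h))}{\|\nabla_x F(t+h,u_+(t))\|} \;\geq\; -\,\omega(h)
\]
where $\omega(h)\to 0$ as $h\searrow 0$ (because $u_+(t)\to u_+(t+h)$? — no: here $v=u_+(t)$ is \emph{fixed} while $u=u_+(t+h)$ varies, so one rewrites {\bf (F6)} as: $\liminf_{v\to u}\frac{F(t+h,v)-F(t+h,u)}{\|\nabla_x F(t+h,v)\|}\geq 0$ applied with $u=u_+(t+h)$ and $v$ ranging near it; since $u_+(t)$ is near $u_+(t+h)$ for small $h$, for every $\eta>0$ there is $h_\eta>0$ with $F(t+h,u_+(t))-F(t+h,u_+(t+h))\geq -\eta\,\|\nabla_x F(t+h,u_+(t))\|$ for $h<h_\eta$, \emph{provided} $u_+(t)$ lies in the relevant neighbourhood, which holds once $h$ is small by right-continuity of $u_+$ — here one uses a uniform version of {\bf (F6)}, valid because $\mathcal{C}(t+h)\cap B_M$ is finite by {\bf (F5)}). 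Now $\|\nabla_x F(t+h,u_+(t))\| = \|\nabla_x F(t+h,u_+(t)) - \nabla_x F(t,u_+(t))\| \leq L h$ by {\bf (F3')}, so $F(t+h,u_+(t))-F(t+h,u_+(t+h)) \geq -\eta L h$, i.e. $F(t+h,u_+(t+h)) - F(t+h,u_+(t)) \geq -\eta L h$. Dividing by $h>0$ and combining with the second-term estimate from the first paragraph,
\[
\liminf_{h\searrow 0}\frac{f_+(t+h)-f_+(t)}{h} \;\geq\; -\eta L + 0,
\]
and since $\eta>0$ was arbitrary, $D_+ f_+(t)\geq 0$, as claimed. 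I expect the delicate point requiring care in the write-up to be the precise justification that {\bf (F6)} can be used with a single $\eta$ uniformly over the (varying) critical points $u_+(t+h)$ for $h$ small, which is where finiteness of $\mathcal{C}(t+h)\cap B_M$ from {\bf (F5)} together with continuity in $t$ of both $F$ and $\nabla_x F$ enters.
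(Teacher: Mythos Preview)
Your final argument contains a genuine sign error. When you apply {\bf (F6)} at time $t+h$ with critical point $u=u_+(t+h)$ and $v=u_+(t)$, the hypothesis yields
\[
F(t+h,u_+(t))-F(t+h,u_+(t+h)) \;\geq\; -\eta\,\|\nabla_x F(t+h,u_+(t))\| \;\geq\; -\eta L h,
\]
which is a \emph{lower} bound on $F(t+h,u_+(t))-F(t+h,u_+(t+h))$, hence only an \emph{upper} bound $F(t+h,u_+(t+h))-F(t+h,u_+(t))\leq \eta L h$ on the quantity you actually need to bound from below. The ``i.e.'' step reversing the sign is simply false. So neither of your two attempts closes: the first leaves a residual $-2C$ after the time-slice change, and the second points the wrong way.

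The fix is a different choice of intermediate point in the decomposition. Add and subtract $F(t,u_+(t+h))$ (not $F(t+h,u_+(t))$), so that
\[
f_+(t+h)-f_+(t)=\bigl[F(t,u_+(t+h))-F(t,u_+(t))\bigr]+\int_t^{t+h}\bigl[\partial_r F(r,u_+(t+h))-\partial_r F(r,u(r))\bigr]\,\mathrm{d}r.
\]
The integral term, divided by $h$, tends to $0$ exactly as in your first paragraph (both arguments of $\partial_r F$ approach $(t,u_+(t))$). For the bracket, apply {\bf (F6)} at the \emph{fixed} time $t$ and the \emph{fixed} critical point $u_+(t)\in\mathcal{C}(t)$, with $v=u_+(t+h)\to u_+(t)$: this gives $F(t,u_+(t+h))-F(t,u_+(t))\geq -\sigma_h\,\|\nabla_xF(t,u_+(t+h))\|$ with $\sigma_h\to 0$. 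Since $u_+(t+h)\in\mathcal{C}(t+h)$, {\bf (F3')} yields $\|\nabla_xF(t,u_+(t+h))\|\leq Lh$, hence the bracket is $\geq -\sigma_h L h$. Dividing by $h$ and letting $h\searrow 0$ gives $D_+f_+(t)\geq 0$. Note that this also dissolves the uniformity worry you raised at the end: {\bf (F6)} is invoked at a single fixed $(t,u_+(t))$, so no appeal to {\bf (F5)} or to continuity in $t$ of the critical set is needed. The paper itself does not spell out the argument (it refers to \cite[Proposition~5.1]{Sci-Sol}), but this is the route that the stated hypotheses are designed for.
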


\begin{proof}
The proof is {\it verbatim} the one given in \cite[Proposition~5.1]{Sci-Sol}, since it only makes use of the stability condition \eqref{(4.1)} together with the assumptions on the potential $F$.
\end{proof} 

In order to prove our main result, we need the following elementary lemma, whose proof can be found in \cite[Lemma~5.2]{Sci-Sol}.

\begin{lem}
Let $g:[a,b]\longrightarrow\R$ be continuous and such that the \emph{Dini upper right derivative of $g$ at $t$},
\begin{equation*}
D^+g(t):=\mathop{\lim\sup}_{h\searrow0}\frac{g(t+h)-g(t)}{h}\geq0,\quad \forall t\in(a,b). 
\end{equation*}
Then $g$ is non-decreasing on $(a,b)$.
\label{lem}
\end{lem}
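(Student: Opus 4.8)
This is the classical "nonnegative upper right Dini derivative forces monotonicity" statement, and the plan is the standard one based on an $\varepsilon$-perturbation together with a supremum argument. The only genuine subtlety is that a mere nonstrict bound $D^+g\ge 0$ does not, by itself, exclude local decrease; one must first upgrade it to a strict inequality.

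\textbf{Step 1 (reduction to the strict case).} Fix $\varepsilon>0$ and set $g_\varepsilon(t):=g(t)+\varepsilon(t-a)$. Since $t\mapsto \varepsilon(t-a)$ is differentiable, its contribution adds exactly $\varepsilon$ to every difference quotient, so $D^+g_\varepsilon(t)=D^+g(t)+\varepsilon\ge\varepsilon>0$ for all $t\in(a,b)$. It suffices to show each $g_\varepsilon$ is non-decreasing on $(a,b)$: then for any $a<s<t<b$ we have $g(s)+\varepsilon(s-a)\le g(t)+\varepsilon(t-a)$, and letting $\varepsilon\searrow0$ yields $g(s)\le g(t)$, which is the claim.

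\textbf{Step 2 (the supremum argument).} Suppose, for contradiction, that $g_\varepsilon$ is not non-decreasing on $(a,b)$: there exist $a<c<d<b$ with $g_\varepsilon(c)>g_\varepsilon(d)$. Consider $E:=\{t\in[c,d]:\,g_\varepsilon(t)\ge g_\varepsilon(c)\}$, which is nonempty (it contains $c$) and closed by continuity of $g_\varepsilon$; let $t^\ast:=\sup E\in E$. Since $g_\varepsilon(t^\ast)\ge g_\varepsilon(c)>g_\varepsilon(d)$ we have $t^\ast<d$, so $t^\ast\in(a,b)$. For every $t\in(t^\ast,d]$ we have $t\notin E$, hence $g_\varepsilon(t)<g_\varepsilon(c)\le g_\varepsilon(t^\ast)$, so the difference quotient $\bigl(g_\varepsilon(t)-g_\varepsilon(t^\ast)\bigr)/(t-t^\ast)$ is strictly negative. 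Taking the $\limsup$ as $t\searrow t^\ast$ gives $D^+g_\varepsilon(t^\ast)\le0$, contradicting Step 1. Hence $g_\varepsilon$ is non-decreasing on $(a,b)$, completing the proof.

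\textbf{Expected main obstacle.} There is essentially no deep obstacle here; the proof is elementary. The one point requiring care is the bookkeeping that adding $\varepsilon(t-a)$ shifts the Dini derivative by exactly $\varepsilon$ (so the perturbed function genuinely has a strictly positive upper right derivative everywhere on $(a,b)$), and the verification that $t^\ast$ indeed lies in the open interval $(a,b)$ so that the hypothesis $D^+g_\varepsilon(t^\ast)>0$ is applicable.
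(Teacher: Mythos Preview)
Your proof is correct; this is the standard $\varepsilon$-perturbation plus supremum argument for the Dini-derivative monotonicity lemma, and all the bookkeeping (the shift of $D^+$ by $\varepsilon$, the location of $t^\ast$ in $(a,b)$) is handled properly. The paper itself does not give a proof but simply refers to \cite[Lemma~5.2]{Sci-Sol}, so there is nothing further to compare.
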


In order to pass to the limit in some energy inequalities, we will need the following Lemma, whose elementary proof is omitted.

\begin{lem}\label{trivial}
Assume {\bf(F0)}-{\bf(F5)}. Let $u_\epsilon$ and $u$ be defined as in \emph{Theorem~\ref{compact}}. Then the following results hold:
\begin{enumerate}
\item[\rm(i)]$\displaystyle\lim_{\epsilon\to0}F(t,u_\epsilon(t))=F(t,u(t))$, for every $t\in[0,T]$;
\item[\rm(ii)]$\displaystyle\lim_{\epsilon\to0}\int_0^t \partial_rF(r,u_\epsilon(r))\,\mathrm{d}r=\int_0^t \partial_rF(r,u(r))\,\mathrm{d}r$, for every $t\in[0,T]$.
\end{enumerate}
\end{lem}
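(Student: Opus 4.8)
The plan is to prove the two convergence statements in Lemma~\ref{trivial} as straightforward consequences of the pointwise convergence $u_\epsilon(t)\to u(t)$ established in Theorem~\ref{compact}, combined with the regularity and power-control assumptions on $F$ and the a priori bounds of Proposition~\ref{apriori}. Both parts are essentially applications of continuity of $F$ (resp. $\partial_t F$) together with a dominated convergence argument; the only point requiring any care is producing a uniform integrable dominating function for the integrand in (ii).

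For part (i), I would simply note that by Proposition~\ref{apriori}(i) the family $(u_\epsilon(t))_\epsilon$ stays in the fixed ball $B_M$, that $u_\epsilon(t)\to u(t)$ by Theorem~\ref{compact}, and that $F(t,\cdot)$ is continuous by {\bf (F0)}; hence $F(t,u_\epsilon(t))\to F(t,u(t))$ for every fixed $t\in[0,T]$. Nothing more is needed.

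For part (ii), I would first observe that, again by Proposition~\ref{apriori}(i), there is a fixed $M>0$ with $\|u_\epsilon(r)\|\le M$ for all $r\in[0,T]$ and all $\epsilon$. By {\bf (F0)} the map $(r,x)\mapsto\partial_r F(r,x)$ is continuous on the compact set $[0,T]\times B_M$, hence bounded there by some constant $C_M$; this constant is an integrable dominating function on $[0,T]$. Since $u_\epsilon(r)\to u(r)$ pointwise and $\partial_r F(r,\cdot)$ is continuous, the integrand $\partial_r F(r,u_\epsilon(r))$ converges pointwise a.e.\ to $\partial_r F(r,u(r))$ (note $u(r)\in B_M$ as well, being a pointwise limit). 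Dominated convergence then yields $\int_0^t\partial_r F(r,u_\epsilon(r))\,\mathrm{d}r\to\int_0^t\partial_r F(r,u(r))\,\mathrm{d}r$ for every $t\in[0,T]$.

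The argument is genuinely routine, so there is no real obstacle; if anything, the only thing one must not overlook is that the dominating bound must be uniform in $\epsilon$, which is guaranteed precisely by the uniform $L^\infty$-bound $\|u_\epsilon\|\le M$ from Proposition~\ref{apriori}(i) together with the compactness of $[0,T]\times B_M$ and the continuity in {\bf (F0)}. This is exactly why the statement explicitly invokes assumptions {\bf (F0)--(F5)} and the setting of Theorem~\ref{compact}.
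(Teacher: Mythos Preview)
Your proposal is correct and is precisely the kind of elementary argument the paper has in mind: the authors explicitly omit the proof as ``elementary,'' and your combination of pointwise convergence from Theorem~\ref{compact}, continuity from {\bf (F0)}, the uniform bound from Proposition~\ref{apriori}(i), and dominated convergence is the natural way to fill it in.
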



The following theoretical result provides the energy balance equality \eqref{measure} for our energies $F(t,u)$.

\begin{thm}\label{genbalance}
Assume {\bf (F0)-(F6)}, with {\bf (F3')} in place of {\bf (F3)}. Let $u_\epsilon$ and $u$ be defined as in \emph{Theorem \ref{compact}}, with $\epsilon\dot{u}_\epsilon(0)\to0$, and let $J$ be the jump set of $u$. There exists a positive atomic measure $\mu$, with ${\rm supp}(\mu)=J$, such that
\begin{equation}
F(t,u_+(t))+\mu([s,t])=F(s,u_-(s))+\int_s^t \partial_r F(r,u(r))\,dr,
\label{measure}
\end{equation} 
for all $0\leq s\leq t\leq T$.
\end{thm}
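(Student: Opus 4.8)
The plan is to establish \eqref{measure} by introducing the function $f$ from \eqref{decr}, showing it equals a nonincreasing function off the jump set, and then reading off the measure from its distributional derivative. First I would observe that, by the energy inequality \eqref{unostima}, the functions $g_\epsilon(t)=F(t,u_\epsilon(t))+\tfrac{\epsilon^2}{2}\|\dot u_\epsilon(t)\|_A^2-\int_0^t\partial_r F(r,u_\epsilon(r))\,\mathrm{d}r$ are nonincreasing on $[0,T]$ and uniformly bounded (Proposition~\ref{apriori}), so by Helly's theorem a (not relabelled) subsequence converges pointwise to a nonincreasing $g\in{\rm BV}([0,T])$. Using Corollary~\ref{a.e.} together with Lemma~\ref{trivial}, we have $g_\epsilon(t)\to f(t)$ for a.e.\ $t$, hence $g=f$ a.e., and in particular $f$ agrees a.e.\ with a function of bounded variation; moreover $f_+$ is the right-continuous representative of $g$ (using that $u_\epsilon(t)\to u(t)$ for every $t$ and the a.e.\ vanishing of $\epsilon^2\|\dot u_\epsilon\|^2$ only gives the a.e.\ identification, while the right/left continuous representatives of a BV function are determined by pointwise left/right limits, which coincide with those of $f_\pm$).

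\textbf{Monotonicity and the two inequalities.} The ``$\le$'' direction of \eqref{measure} comes from passing to the limit in $g_\epsilon$: for $0\le s\le t\le T$ we have $g_\epsilon(t)\le g_\epsilon(s)$, and choosing continuity points and then using right/left limits together with Lemma~\ref{trivial} yields $f_+(t)\le f_-(s)$, i.e.\ $F(t,u_+(t))-F(s,u_-(s))\le \int_s^t\partial_r F(r,u(r))\,\mathrm{d}r$. For the ``$\ge$'' direction—where assumptions \textbf{(F6)} and \textbf{(F3')} enter—I would invoke Proposition~\ref{proposition}: the Dini lower right derivative $D_+ f_+(t)\ge 0$ everywhere. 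Since $-g$ is nondecreasing and $g$ is ${\rm BV}$, write $g=g^{ac}+g^{j}+g^{c}$; the point is that $f_+$ is, up to the absolutely continuous and Cantor parts (which by $D_+f_+\ge 0$ and the nonincreasing character of $g$ must in fact be \emph{constant}: a nonincreasing function whose right-continuous representative has nonnegative lower right Dini derivative everywhere has no absolutely continuous decrease and no Cantor part), a pure jump function. More carefully: define $\mu := -Dg$ restricted to its atomic part; the nonnegativity of $D_+f_+$ forces $Dg$ to have no negative absolutely continuous or Cantor component, while $Dg\le 0$ as a measure forces it to have no positive part either, so $Dg=-\mu$ with $\mu$ a positive atomic measure. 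Then \eqref{teoremafond} gives $g_+(t)-g_-(s)=Dg([s,t])=-\mu([s,t])$, which is exactly \eqref{measure} after substituting $g_\pm=f_\pm$ and rearranging.

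\textbf{Identifying the support.} It remains to check ${\rm supp}(\mu)=J$. The inclusion ${\rm supp}(\mu)\subseteq J$ is clear since $g_\pm=f_\pm$ differ only through $F(t,u_\pm(t))$, so an atom of $\mu$ at $t$ forces $u_+(t)\ne u_-(t)$ (using continuity of $F(t,\cdot)$). Conversely, if $t\in J$, then by Theorem~\ref{compact} both $u_-(t),u_+(t)\in\mathcal{C}(t)$ and are distinct; one must show $F(t,u_-(t))>F(t,u_+(t))$, equivalently $\mu(\{t\})>0$. This is where the real content lies, and I expect it to be the main obstacle: it cannot follow from \textbf{(F6)} alone and presumably requires the cost estimate from Proposition~\ref{propdelta} applied across the jump—taking $s_k\nearrow t$, $t_k\searrow t$ with $u_{\epsilon_k}(s_k)\to u_-(t)$, $u_{\epsilon_k}(t_k)\to u_+(t)$, the dissipation integral $\int_{s_k}^{t_k}\|\nabla_x F\|\|\dot u_{\epsilon_k}\|\,\mathrm{d}r$ is bounded below by $\delta>0$, and then the chain of inequalities as in \eqref{bigstima} shows $\liminf_k \epsilon_k\int_{s_k}^{t_k}\|\dot u_{\epsilon_k}\|^2\,\mathrm{d}r>0$, whence $\mu(\{t\})=\mu_\epsilon\text{-limit}\ge$ a positive multiple of $\delta^2$. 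Combining this lower bound with the energy identity \eqref{zerostima} across $[s_k,t_k]$ then yields $g_-(t)-g_+(t)>0$, i.e.\ the atom is genuine, completing the proof.
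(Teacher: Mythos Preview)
Your overall strategy matches the paper's: introduce $f$, show $f_+$ is nonincreasing by passing to the limit in the energy inequality (the paper does this directly from \eqref{unostima} and Lemma~\ref{trivial} rather than via Helly on $g_\epsilon$, but the content is identical), set $\mu=-Df$, and then use Proposition~\ref{proposition} to eliminate the diffuse part of $\mu$. One technical point worth tightening: your assertion that $D_+f_+\ge 0$ forces the absolutely continuous and Cantor parts of $Dg$ to vanish is correct, but it does not follow from $D_+f_+\ge 0$ applied to $f_+$ itself, since $f_+$ is not continuous and Lemma~\ref{lem} requires continuity. The paper makes this step precise by first subtracting the right-continuous jump function $f^J(t)=\sum_{s\in[0,t]}(f_+(s)-f_-(s))$, so that $f_+-f^J$ is continuous; then $D_+(f_+-f^J)\ge 0$ (summing \eqref{liminfpos} with the trivial $D_+(-f^J)\ge 0$) and Lemma~\ref{lem} yields that $f_+-f^J$ is nondecreasing, whence $\mu=\mu^J$.

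Your route to the inclusion $J\subseteq{\rm supp}(\mu)$ is a genuine variation. The paper simply stops at $\mu=\mu^J$ and identifies ${\rm supp}(\mu)$ with the jump set of $f$; the strict positivity $\mu(\{t\})>0$ for $t\in J$ is asserted in the subsequent remark ``by construction'' and is only fully justified later via Proposition~\ref{proposizione1} and Theorem~\ref{cost}(2). Your direct argument---taking $s_k\nearrow t$, $t_k\searrow t$, invoking Proposition~\ref{propdelta}, and rerunning the chain \eqref{bigstima} to force $\liminf_k \epsilon_k\int_{s_k}^{t_k}\|\dot u_{\epsilon_k}\|^2\,\mathrm{d}r>0$, hence $\mu(\{t\})>0$ by upper semicontinuity of the weak-$*$ limit on shrinking closed intervals---is self-contained and works without appealing to the cost function at all. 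This buys you the full statement ${\rm supp}(\mu)=J$ within the proof of Theorem~\ref{genbalance} itself, at the price of re-using the compactness machinery rather than the later variational characterization.
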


\proof

Let $f: [0,T]\longrightarrow\mathbb{R}$ be defined as in \eqref{decr}. With fixed $\delta>0$, we extend the function $f$ to the open interval $(-\delta, T+\delta)$ as
\begin{equation*}
\tilde{f}(s)=
\begin{cases}
f(0) & \text{if }s\in(-\delta,0)\\
f(s) & \text{if }s\in[0,T]\\
f(T) & \text{if }s\in(T, T+\delta).
\end{cases}
\end{equation*}
With a slight abuse of notation, we will denote still by $f$ such an extension. Now, if we define $\mu:=-Df$, we have ${\rm supp}(\mu)\subseteq[0,T]$. We aim to show that $\mu$ is a positive measure, and for this it will suffice to prove that $f$ belongs to the Lebesgue class of a non-increasing function. First, we prove the following Claim.

\emph{Claim: $f_+$ is non-increasing.}\\
\emph{Proof of Claim.} Let $s,t\in(-\delta, T+\delta)$ with $s>t$, we then have three cases:
\begin{enumerate}
\item[(a)] $0<t<T$. We can fix two sequences $s_k\searrow s$, $t_k\searrow t$ with $s_k>t_k$ for every $k\in\N$ and such that $\epsilon^2\|\dot{u}_\epsilon(t_k)\|^2\to0$ (as a consequence of Corollary~\ref{a.e.}). From the energy inequality \eqref{unostima} we get
\begin{equation*}
F(s_k, u_\epsilon(s_k)) \leq F(t_k, u_\epsilon(t_k))+\frac{\epsilon^2}{2}\|\dot{u}_\epsilon(t_k)\|^2_A+ \int_{t_k}^{s_k}\partial_rF(r,u_\epsilon(r))\,\mathrm{d}r
\end{equation*}
and then, passing to the limit as $\epsilon\to0$, by Lemma~\ref{trivial} we obtain
\begin{equation*}
F(s_k, u(s_k)) \leq F(t_k, u(t_k))+ \int_{t_k}^{s_k}\partial_rF(r,u(r))\,\mathrm{d}r.
\end{equation*}
Finally, passing to the limit as $k\to+\infty$, we get
\begin{equation*}
F(s, u_+(s)) \leq F(t, u_+(t))+ \int_{t}^{s}\partial_rF(r,u(r))\,\mathrm{d}r,
\end{equation*}
which corresponds to $f_+(s)\leq f_+(t)$.
\item[(b)] $t<0$. If also $s<0$, the assertion is trivial. If $s\geq0$, we have to show that $f_+(s)\geq f_+(t)=0$. Since $\epsilon\dot{u}_\epsilon(0)\to0$, it will suffice to consider $s_k\searrow s$ and use the inequality
\begin{equation*}
F(s_k, u_\epsilon(s_k)) \leq F(0, u_\epsilon(0))+ \frac{\epsilon^2}{2}\|\dot{u}_\epsilon(0)\|^2_A + \int_{0}^{s_k}\partial_rF(r,u_\epsilon(r))\,\mathrm{d}r,
\end{equation*}
where we pass to the limit as $\epsilon\to0$ first and then as $k\to+\infty$.
\item[(c)] If $t\geq T$, since by convention $u_+(T)=u(T)$ and then $f_+(T)=f(T)$, the assertion follows immediately.
\end{enumerate}
This concludes the proof of the Claim.

The Claim implies now that $\mu=-Df$ is a positive measure and, in particular, $f\in BV(-\delta,T+\delta)$. Moreover, by \cite[Theorem~3.28]{AFP} we get
\begin{equation}
F(t,u_+(t))+\mu([s,t])=F(s,u_-(s))+\int_s^t \partial_r F(r,u(r))\,\mathrm{d}r,
\label{measurebis}
\end{equation}
for all $0\leq s\leq t$, with the usual convention that $u_+(T)=u(T)$ and $u_-(0)=u(0)$.

We are left to show that ${\rm supp}(\mu)=J$. In order to do that, we define
\begin{equation*}
f^J(t):=\sum_{s\in[0,t]} (f_+(s)-f_-(s)),
\end{equation*}
which is the right-continuous jump function of $f$. We note that the set of discontinuities of $f^J$ coincides with $J$ and $Df^J=(Df)^J$, the latter being the jump part of measure $Df$. Moreover, $f^J$ is nonincreasing, so that 
\begin{align}\label{fatto}
\mathop{\lim\inf}_{h\searrow0}\frac{f^J(t)-f^J(t+h)}{h}\geq0
\end{align}
and $\mu^J=-Df^J$ is positive. It holds also $\mu\geq \mu^J$, since $\mu$ is positive.
Summing up \eqref{fatto} and \eqref{liminfpos} we get
\begin{equation*}
\mathop{\lim\inf}_{h\searrow0}\frac{(f_+-f^J)(t+h)-(f_+-f^J)(t)}{h}\geq0.
\end{equation*}
Since, by construction, $f_+-f^J$ is a continuous function,  by Lemma~\ref{lem} $f_+-f^J$ is nondecreasing. Therefore $f_+(t)-f^J(t)\geq f_+(0)-f^J(0)$ for all $t\in[0,T]$, or, equivalently, 
\begin{equation*}
F(t,u_+(t))+\mu^J([0,t])\geq F(0,u(0))+\int_0^t \partial_r F(r,u(r))\,dr,
\end{equation*}
where, by the usual convention, $u(0)=u_-(0)$. Comparing the latter estimate with \eqref{measurebis} we finally get
\begin{equation*}
\mu^J([0,t])\geq\mu([0,t])\geq\mu^J([0,t]),\quad \forall t,
\end{equation*}
which gives $\mu^J=\mu$, thus concluding the proof.

\endproof

\begin{oss}
We note that, by construction, it holds
\begin{equation}\label{mu}
F(t,u_-(t))-F(t,u_+(t))=\mu(\{t\})>0,\quad \forall t\in J.
\end{equation}
\end{oss}

\subsection{The energy-dissipation cost}

In this section, we prove that the gap of the potential $F(t,u(t))$ at a jump point $t\in J$ can be measured by means of a (positive and symmetric) cost function, solution to an optimization problem with boundary conditions at infinity $v(-\infty)=u_-(t)$, $v(+\infty)=u_+(t)$; namely,
\begin{equation*}
F(t,u_-(t))-F(t,u_+(t))=c_t(u_-(t),u_+(t)),\quad\text{ for every }t\in J.
\end{equation*}
In order to lighten the notation, from now on we set $F_t(u):=F(t,u)$. 

\begin{defn}
For every $t\in[0,T]$ and $u_1,u_2\in X$, we define the \emph{energy-dissipation cost} as
\begin{equation}
c_t(u_1,u_2):=
\inf\left\{\frac{1}{2}\int_{-N}^{N}\|\nabla_xF_t(v(s))+A\ddot{v}(s)\|_{B^{-1}}^2+\|\dot{v}(s)\|_B^2\,\mathrm{d}s:\,v\in V^{t, N}_{u_1,u_2},\,N\in \mathbb{N}\right\}
\label{costfun}
\end{equation}
where 
\begin{equation*}
V^{t,N}_{u_1,u_2}:=\Bigl\{v\in W^{2,2}([-N, N],X):\, v(-N)=u_1, v(N)=u_2, \dot v(-N)=\dot v(N)=0\Bigr\}
\end{equation*}
denotes the class of the \emph{admissible curves}.
\end{defn}
The following theorem collects the main properties of the cost function.

\begin{thm}\label{cost}
Under assumptions {\bf (F0)-(F5)}, for every $t\in[0,T]$ and $u_1,u_2\in X$ we have:
\begin{enumerate}
\item[\rm (1)] the cost is symmetric; i.e., $c_t(u_1,u_2)=c_t(u_2,u_1)$;
\item[\rm (2)] $c_t(u_1,u_2)=0$ if and only if $u_1=u_2$;
\item[\rm (3)] $c_t(u_1,u_2)\geq |F_t(u_1)-F_t(u_2)|$;
\item[\rm (4)] for every $u_3\in X$, the triangle inequality
\begin{equation}
c_t(u_1,u_2)\leq c_t(u_1,u_3)+c_t(u_3,u_2)
\end{equation}
holds.
\item[\rm (5)] if $u_1 \neq u_2$ and $u_1, u_2 \in \mathcal C(t)$, then
\begin{equation}
c_t(u_1, u_2)=\inf\left\{\frac{1}{2}\int_{-\infty}^{+\infty}\|\nabla_xF_t(v(s))+A\ddot{v}(s)\|_{B^{-1}}^2+\|\dot{v}(s)\|_B^2\,\mathrm{d}s:\,v\in V^t_{u_1,u_2}\right\}
\label{costfun+}
\end{equation}
where 
\begin{equation*}
V^{t}_{u_1,u_2}:=\Bigl\{v\in W^{2,2}(\R,X):\, v(-\infty)=u_1, v(+\infty)=u_2\Bigr\}\,.
\end{equation*}
\end{enumerate}
\end{thm}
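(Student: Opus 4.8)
The plan is to establish the five properties more or less in the order they are stated, since the later ones will rely on the earlier. For (1), I would observe that if $v \in V^{t,N}_{u_1,u_2}$ then its time-reversal $\tilde v(s):= v(-s)$ lies in $V^{t,N}_{u_2,u_1}$, and that the integrand is invariant under $s\mapsto -s$: indeed $\ddot{\tilde v}(s) = \ddot v(-s)$ and $\dot{\tilde v}(s) = -\dot v(-s)$, so both $\|\nabla_x F_t(\tilde v(s))+A\ddot{\tilde v}(s)\|_{B^{-1}}^2$ and $\|\dot{\tilde v}(s)\|_B^2$ are unchanged after the substitution. Hence the two infima coincide. For (2), the ``if'' direction is immediate by taking the constant curve $v\equiv u_1$ (admissible for every $N$, with zero integrand), so $c_t(u_1,u_1)=0$. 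The ``only if'' direction is the genuine content: I would invoke Proposition~\ref{propdelta} with the degenerate choice $s_k=-N_k$, $t_k=N_k$ after a suitable time-rescaling, but more directly I expect the cleanest route is to use part (3), proved next, which already forces $c_t(u_1,u_2)\geq |F_t(u_1)-F_t(u_2)|$, and then to argue that if $F_t(u_1)=F_t(u_2)$ but $u_1\neq u_2$ an infimizing sequence must pass through the compact ``corridor'' away from critical points where $\|\nabla_x F_t\|$ is bounded below, producing via a Young/Cauchy--Schwarz splitting (exactly as in \eqref{bigstima}) a uniform positive lower bound on the dissipation integral — contradicting $c_t=0$.

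For (3), the key is the chain-rule identity along an admissible curve: using \eqref{CSequi} and \eqref{simpleid} with $Q=B$,
\begin{equation*}
\Bigl|\frac{\mathrm{d}}{\mathrm{d}s}F_t(v(s))\Bigr| = |\langle \nabla_x F_t(v(s)),\dot v(s)\rangle| \le \langle \nabla_x F_t(v(s))+A\ddot v(s),\dot v(s)\rangle + |\langle A\ddot v(s),\dot v(s)\rangle|,
\end{equation*}
and then bounding $\langle \nabla_x F_t+A\ddot v,\dot v\rangle \le \frac12\|\nabla_x F_t+A\ddot v\|_{B^{-1}}^2 + \frac12\|\dot v\|_B^2$ while recognizing $\langle A\ddot v,\dot v\rangle = \frac12\frac{\mathrm{d}}{\mathrm{d}s}\|\dot v\|_A^2$, whose integral over $[-N,N]$ vanishes because $\dot v(\pm N)=0$. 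Integrating from $-N$ to $N$ then yields $|F_t(u_2)-F_t(u_1)| \le \frac12\int_{-N}^N \|\nabla_x F_t(v)+A\ddot v\|_{B^{-1}}^2 + \|\dot v\|_B^2\,\mathrm{d}s$, and taking the infimum over $v$ and $N$ gives (3). For (4), given admissible curves $v_1\in V^{t,N_1}_{u_1,u_3}$ and $v_2\in V^{t,N_2}_{u_3,u_2}$ achieving costs within $\eta$ of the respective infima, I would concatenate them: shift $v_2$ so that its left endpoint sits at $s=N_1$, forming a curve on $[-N_1, N_1+2N_2]$; since $\dot v_1(N_1)=\dot v_2(-N_2)=0$ the concatenation is $W^{2,2}$ (the velocities match at the junction and both are zero, and a standard argument shows no Dirac mass appears in $\ddot v$ — here one may need to insert a short ``plateau'' at $u_3$ to be safe, which costs nothing since the integrand vanishes on a constant), and after a final translation to center the domain we obtain an admissible competitor whose cost is the sum of the two. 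Letting $\eta\to0$ gives the triangle inequality.

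The main obstacle is part (5): upgrading the finite-horizon infimum over $\bigcup_N V^{t,N}_{u_1,u_2}$ to the infinite-horizon infimum over $V^t_{u_1,u_2}$ when $u_1,u_2\in\mathcal C(t)$ are distinct critical points. The inequality ``$c_t(u_1,u_2)\ge$ (infinite-horizon infimum)'' requires extending a finite-horizon competitor $v$ by the constants $u_1$ on $(-\infty,-N]$ and $u_2$ on $[N,+\infty)$; this extension lies in $W^{2,2}(\R,X)$ precisely because $\dot v(\pm N)=0$ and because $\nabla_x F_t(u_1)=\nabla_x F_t(u_2)=0$, so the integrand vanishes outside $[-N,N]$ and the value is unchanged — this is exactly where the hypothesis $u_1,u_2\in\mathcal C(t)$ is used. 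The reverse inequality is the delicate direction: given $v\in V^t_{u_1,u_2}$ with finite energy, one must produce finite-horizon competitors with nearly the same energy. Since the energy is finite and $v(\pm\infty)$ are critical points, $\|\dot v\|_B^2$ and $\|\nabla_x F_t(v)+A\ddot v\|_{B^{-1}}^2$ are integrable, forcing (via \eqref{mainequation}-type arguments and the $W^{2,2}$ bound, essentially $\dot v\in W^{1,2}(\R)$ hence $\dot v(s)\to 0$ as $|s|\to\infty$) that $\dot v(\pm N)\to0$ along a subsequence of $N\to\infty$. One then truncates $v$ to $[-N,N]$ and modifies it on a unit-length interval near each endpoint to enforce the exact boundary conditions $v(\pm N)=u_1,u_2$ and $\dot v(\pm N)=0$; the modification can be made with energy cost tending to $0$ because $v(N)\to u_2$, $\dot v(N)\to 0$ (and similarly at $-N$), using that the integrand is controlled by $\|v-u_i\|$, $\|\dot v\|$, $\|\ddot v\|$ on a bounded set via {\bf (F0)}, {\bf (F4)}. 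Taking $N\to\infty$ yields the matching upper bound. I expect the careful bookkeeping of this cutoff-and-splice construction, and the verification that the splice keeps the curve in $W^{2,2}$ without introducing spurious second-derivative mass, to be where the real work lies.
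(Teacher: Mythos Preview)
Your overall architecture matches the paper's proof: time-reversal for (1), chain rule plus Cauchy--Schwarz for (3), concatenation for (4), and extend-by-constants versus truncate-and-splice for the two inequalities in (5). Two points need correction.

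In (3), your displayed inequality is wrong as written: you bound $|\langle\nabla_xF_t(v),\dot v\rangle|$ by a sum containing $|\langle A\ddot v,\dot v\rangle|$, but the integral of the \emph{absolute value} of this term does not vanish. The paper instead works without absolute values, computing $F_t(u_2)-F_t(u_1)=\int_{-N}^N\langle\nabla_xF_t(v)+A\ddot v,\dot v\rangle\,\mathrm{d}s$ directly (the kinetic boundary terms cancel since $\dot v(\pm N)=0$), applies \eqref{CSequi}, and then recovers the absolute value from the symmetry (1). More substantively, in (2) you correctly identify the corridor argument and the Young splitting, but you skip the step that makes the splitting effective: one needs a uniform bound $\int_{-N}^N\|\nabla_xF_t(v(s))\|^2\,\mathrm{d}s\le M$, independent of the competitor $v$ and of $N$, so that the term $\frac{\delta}{2M}\int\|\nabla_xF_t(v)\|^2$ can be absorbed. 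The paper establishes this as a separate Claim: first the total-energy identity gives $\sup_s(\|v(s)\|+\|\dot v(s)\|_A^2)\le M$, and then an integration by parts using {\bf(F4)} separates $\int\|A\ddot v\|^2$ and $\int\|\nabla_xF_t(v)\|^2$ from the combined $\int\|\nabla_xF_t(v)+A\ddot v\|^2$ --- this is the autonomous analogue of Proposition~\ref{apriori}(vi)--(viii). Your reference to ``exactly as in \eqref{bigstima}'' glosses over this, since the estimate \eqref{bigstima} itself relied on the corresponding a priori bound Proposition~\ref{apriori}(viii).
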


\proof
(1) If $u_1=u_2$, then the assertion is trivial by the definition of $c_t(u_1,u_2)$. Thus, let $u_1\neq u_2$, fix $N \in \mathbb{N}$, consider $v\in V^{t,N}_{u_1,u_2}$ and define $\tilde{v}(s):=v(-s)$. We then have $\tilde{v}\in V^{t,N}_{u_2,u_1}$ and    
\begin{equation}
\begin{split}
c_t(u_2,u_1)&\leq
\frac{1}{2}\int_{-N}^{N}\|\nabla_xF_t(\tilde{v}(s))+A\ddot{\tilde{v}}(s)\|_{B^{-1}}^2+\|\dot{\tilde{v}}(s)\|_B^2\,\mathrm{d}s\\
&= \frac{1}{2}\int_{-N}^{N}\|\nabla_xF_t(v(s))+A\ddot{v}(s)\|_{B^{-1}}^2+\|\dot{v}(s)\|_B^2\,\mathrm{d}s \\.
\end{split}
\label{relazione1}
\end{equation}
Now, taking the infimum in the right hand side of \eqref{relazione1} on the set $V^{t,N}_{u_2,u_1}$ and $N \in \mathbb{N}$, we get the inequality
\begin{equation*}
c_t(u_2,u_1)\leq c_t(u_1,u_2).
\end{equation*} 
The assertion then follows by interchanging the role of $u_1$ and $u_2$.
\\
(2) Let $u_1\neq u_2$. It clearly suffices to provide a lower bound, independent of $N$, on the energy of competitors $v \in V^{t,N}_{u_1,u_2}$ satisfying
\begin{equation}
\frac12\int_{-N}^{N}\|\nabla_xF_t(v(s))+A\ddot{v}(s)\|_{B^{-1}}^2+\|\dot{v}(s)\|_B^2\,\mathrm{d}s\leq 1\,.
\label{bounduni}
\end{equation}
We begin by proving the following\\
\emph{Claim:} there exists  $M>0$, not depending on $N$, such that
\begin{align}
&\sup_{s\in[-N, N]}\|v(s)\|+\|\dot{v}(s)\|_A^2\le M \label{equa1n}\\
&\int_{-N}^{N}\|A\ddot{v}(s)\|^2\,\mathrm{d}s\leq M,\label{equa2}\\
&\int_{-N}^{N}\|\nabla_xF_t(v(s))\|^2\,\mathrm{d}s\leq M,\label{equa3}
\end{align}
for every $v\in V^{t,N}_{u_1,u_2}$ which satisfies \eqref{bounduni}.

In order to prove the Claim, we note that, by the fundamental theorem of calculus, with the Cauchy-Schwarz inequality \eqref{CSequi} and \eqref{bounduni}, for every $s\in[-N, N]$ we have
\begin{equation}
\begin{split}
&F_t(v(s))+\frac{1}{2}\|\dot{v}(s)\|_A^2=F_t(u_1)+\int_{-N}^s \langle \nabla_x F_t(v(r))+A\ddot{v}(r), \dot{v}(r)\rangle\,\mathrm{d}r\\
&\leq F_t(u_1)+\frac{1}{2}\int_{-N}^s \|\nabla_x F_t(v(r))+A\ddot{v}(r)\|_{B^{-1}}^2+ \|\dot{v}(r)\|_B^2\,\mathrm{d}r\\
&\leq  F_t(u_1)+1\,.
\label{(5.22)}
\end{split}
\end{equation}
Now, from \eqref{(5.22)} we get the equi-boundedness of $F_t(v(s))$ and $\|\dot{v}(s)\|_A^2$, and, in view of {\bf (F1)}, also \eqref{equa1n}. Furthermore, denoting with $\alpha$ and $\beta$ the coercivity constants of $A$ and $B$, respectively, with \eqref{equa1n} and an integration by parts  we obtain
\begin{equation*}
\begin{split}
&\int_{-N}^{N} \|A\ddot{v}(s)\|^2\,\mathrm{d}s + \int_{-N}^{N} \|\nabla_x F_t(v(s))\|^2\,\mathrm{d}s\\
&=\int_{-N}^{N} \|\nabla_xF_t(v(s))+A\ddot{v}(s)\|^2\,\mathrm{d}s-2\int_{-N}^N \langle \nabla_xF_t(v(s)), A\ddot{v}(s)\rangle\,\mathrm{d}s\\
&\leq \frac2{\beta^2}+2\left|\int_{-N}^N\langle \nabla_x^2F_t(v(s))A\dot{v}(s),\dot{v}(s)\rangle\,\mathrm{d}s\right|
\\&\leq\frac2{\beta^2}+\frac {M\alpha}{\beta^2}\int_{-N}^{N}\|\dot{v}(s)\|_B^2\,\mathrm{d}s\leq \frac{2+M\alpha}{\beta^2}
\end{split}
\end{equation*} 
where we have also exploited the regularity assumptions {\bf(F0)} and {\bf(F4)}, and \eqref{bounduni}. Since the constant on the right-hand side does not depend on $N$, this gives \eqref{equa2}-\eqref{equa3} and concludes the proof of Claim.

We now observe that, since $u_1 \neq u_2$, we may find a constant $\delta=\delta(t,u_1,u_2)>0$, not depending on $N$, such that
\begin{equation}
\delta<\int_{-N}^{N}\|\nabla_xF_t(v(s))\|\|\dot{v}(s)\|\,\mathrm{d}s,\quad \text{ for every }v\in V^{t,N}_{u_1,u_2}. 
\label{equa1}
\end{equation}
This can be proved along similar lines than in the proof of \eqref{propdelta1}, using \eqref{equa1n} and {\bf (F5)}.
Taking into account \eqref{equa1}, \eqref{equa3} and applying the Cauchy inequality we further get
\begin{equation*}
\begin{split}
\delta&<\int_{-N}^{N}\|\nabla_xF_t(v(s))\|\|\dot{v}(s)\|\,\mathrm{d}s\leq\int_{-N}^{N}\sqrt{\frac{\delta}{M}}\|\nabla_xF_t(v(s))\|\sqrt{\frac{M}{\delta}}\frac{1}{\beta}\|\dot{v}(s)\|_B\,\mathrm{d}s\\
&\leq \frac{\delta}{2M}\int_{-N}^{N}\|\nabla_xF_t(v(s))\|^2\,\mathrm{d}s + \frac{M}{2\delta\beta^2}\int_{-N}^{N}\|\dot{v}(s)\|_B^2\,\mathrm{d}s\\
&\leq \frac{\delta}{2}+\frac{M}{2\delta\beta^2}\int_{-N}^{N}\|\dot{v}(s)\|_B^2\,\mathrm{d}s\\
&\leq\frac{\delta}{2}+\frac{M}{\delta\beta^2}\left(\frac{1}{2}\int_{-N}^{N}\|\nabla_xF_t(v(s))+A\ddot{v}(s)\|_{B^{-1}}^2+\|\dot{v}(s)\|_B^2\,\mathrm{d}s\right),
\end{split}
\end{equation*}
which gives
\begin{equation}
\frac{1}{2}\int_{-N}^{N}\|\nabla_xF_t(v(s))+A\ddot{v}(s)\|_{B^{-1}}^2+\|\dot{v}(s)\|_B^2\,\mathrm{d}s \geq\frac{\delta^2\beta^2}{2M}>0.
\label{newdelta}
\end{equation}
Since this lower bound is independent of the competitor $v$ and of $N$, we get $c_t(u_1,u_2)>0$ whenever $u_1 \neq u_2$. The other implication is obvious.
\\
(3) Since the cost $c_t(u_1,u_2)$ is symmetric, it will suffice to show that
\begin{equation*}
c_t(u_1,u_2)\geq F_t(u_2)-F_t(u_1).
\end{equation*} 
From the definition of $c_t(u_1,u_2)$, for every fixed $\eta>0$ there exist $N \in \mathbb{N}$ and $v\in V^{t,N}_{u_1,u_2}$ such that
\begin{equation*}
\frac{1}{2}\int_{-N}^{N}\|\nabla_xF_t(v(s))+A\ddot{v}(s)\|_{B^{-1}}^2+\|\dot{v}(s)\|_B^2\,\mathrm{d}s\leq c_t(u_1,u_2)+\eta.
\end{equation*}
With the fundamental theorem of calculus and \eqref{CSequi}, we then have
\begin{equation*}
\begin{split}
F_t(u_2)&-F_t(u_1)=F_t(v(N))-F_t(v(-N))+\frac12\left(\|\dot v(N)\|_A^2-\|\dot v(-N)\|_A^2\right)\\
&=\int_{-N}^{N}\langle\nabla_xF_t(v(s))+A\ddot{v}(s),\dot{v}(s)\rangle\,\mathrm{d}s\\
&\leq \frac{1}{2}\int_{-N}^{N}\|\nabla_xF_t(v(s))+A\ddot{v}(s)\|_{B^{-1}}^2+\|\dot{v}(s)\|_B^2\,\mathrm{d}s\leq c_t(u_1,u_2)+\eta,
\end{split}
\end{equation*}
whence we get the thesis by the arbitrariness of $\eta$.\\
(4) With $\eta>0$ fixed, we may find $N_1, N_2 \in \mathbb{N}$, $v_1\in V^{t, N_1}_{u_1,u_3}$ and $v_2\in V^{t, N_2}_{u_3,u_2}$ such that
\begin{align}
c_t(u_1,u_3)&\geq\frac{1}{2}\int_{-N_1}^{N_1}\|\nabla_xF_t(v_1(s))+A\ddot{v}_1(s)\|_{B^{-1}}^2+\|\dot{v}_1(s)\|_B^2\,\mathrm{d}s - \eta,\label{conda}\\
c_t(u_3,u_2)&\geq\frac{1}{2}\int_{-N_2}^{N_2}\|\nabla_xF_t(v_2(s))+A\ddot{v}_2(s)\|_{B^{-1}}^2+\|\dot{v}_2(s)\|_B^2\,\mathrm{d}s - \eta.\label{condb}
\end{align}
It suffices to set 
\[
v_3(s)=\left\lbrace
  \begin{array}{l}
    v_1(s+N_2)\quad\mbox{if }s \in [-(N_1+N_2), N_1-N_2] \\[5pt]
    v_2(s-N_1)\quad\mbox{if }s \in [N_1-N_2, N_1+N_2] \,,
  \end{array}
  \right.
\]
to have an admissible competitor in $V^{t, N_1+N_2}_{u_1, u_2}$. With \eqref{conda} and \eqref{condb} we get $c_t(u_1, u_2)\le c_t(u_1, u_3)+ c_t(u_3, u_2) +2\eta$, whence the conclusion follows by arbitrariness of $\eta$.
\\
(5) The ''$\ge$'' inequality is obvious, since each function  $v\in V^{t, N}_{u_1, u_2}$ can be extended to a function in $V^t_{u_1, u_2}$ by simply setting $v(s)=u_1$ if $s\le -N$ and $v(s)=u_2$ if $s \ge N$ without spending additional energy whenever $u_1, u_2 \in \mathcal{C}(t)$.
To prove the converse, we set
\[
\tilde{c}_t(u_1, u_2)=\inf\left\{\frac{1}{2}\int_{-\infty}^{+\infty}\|\nabla_xF_t(v(s))+A\ddot{v}(s)\|_{B^{-1}}^2+\|\dot{v}(s)\|_B^2\,\mathrm{d}s:\,v\in V^t_{u_1,u_2}\right\}\,.
\] 
We fix $\eta>0$ and $v \in V^t_{u_1, u_2}$ with
\[
\frac{1}{2}\int_{-\infty}^{+\infty}\|\nabla_xF_t(v(s))+A\ddot{v}(s)\|_{B^{-1}}^2+\|\dot{v}(s)\|_B^2\,\mathrm{d}s \le \tilde{c}_t(u_1, u_2)+\eta
\]
By definition of $V^t_{u_1, u_2}$, we may fix $a<0<b\in\mathbb{R}$ with the property
\begin{align}
\|\dot{v}(a)\|+\|v(a)-u_1\|+\|\dot{v}(b)\|+\|v(b)-u_2\| \leq\eta\label{cond4bis},
\end{align}
Next, we define the function $z$ as
\begin{equation*}
z(s)=
\begin{cases}
u_1+g(s+1-a)(v(a)-u_1)+h(s+1-a)\dot{v}(a), & \text{if }s\in(a-1,a],\\
v(s), & \text{if }s\in(a,b),\\
v(b)+g(s-b)(u_2-v(b))+\ell(s-b)\dot{v}(b), & \text{if }s\in[b, b+1),
\end{cases}
\end{equation*}
where 
\[
g(p)=3p^2-2p^3,\quad h(p)=p^2(p-1),\quad \ell(p)= p^3-2p^2+p,\quad  p\in[0,1].
\]
Note that, by construction, the right and left limits of both $z$ and $\dot z$ for $s \to a$ and $s \to b$ coincide, so that $z \in W^{2,2}([a-1, b+1],X)$. It also holds $\dot z(a-1)=\dot z(b+1)=0$ so that we simply can extend $z$ with the constant values $z(s)=u_1$ for $s\ge a-1$ and $z(s)=u_2$ for $s \ge b+1$ to have $z\in V^{t,N}_{u_1,u_2}$ for a suitable $N \in \mathbb{N}$. 

Taking into account  \eqref{cond4bis} we infer
\begin{equation*}
\frac12\int_{\R\setminus(a, b)}\|\nabla_xF_t(z(s))+A\ddot{z}(s)\|_{B^{-1}}^2+\|\dot{z}(s)\|_B^2\,\mathrm{d}s \leq C(\|A\|,\|B\|)\eta^2+\omega^2(\eta)\
\end{equation*}
where $\omega(\eta)$ is a modulus of continuity for $\nabla_xF_t(\cdot)$ on the union of balls $B_1(u_2)\cup B_1(u_1)$. With this,
we finally get
\[
\begin{split}
c_t(u_1, u_2) & \leq \frac12 \int_{-N}^N \|\nabla_xF_t(z(s))+A\ddot{z}(s)\|_{B^{-1}}^2+\|\dot{z}(s)\|_B^2\,\mathrm{d}s \\
&\leq C(\|A\|,\|B\|)\eta^2+\omega^2(\eta)+ \frac12 \int_a^b \|\nabla_xF_t(z(s))+A\ddot{z}(s)\|_{B^{-1}}^2+\|\dot{z}(s)\|_B^2\,\mathrm{d}s\\
&= C(\|A\|,\|B\|)\eta^2+\omega^2(\eta)+ \frac12 \int_a^b \|\nabla_xF_t(v(s))+A\ddot{v}(s)\|_{B^{-1}}^2+\|\dot{v}(s)\|_B^2\,\mathrm{d}s\\
& \leq C(\|A\|,\|B\|)\eta^2+\omega^2(\eta)+ \tilde{c}_t(u_1, u_2)+\eta\,,
\end{split}
\]
which implies the conclusion by arbitrariness of $\eta$.

\endproof

We can now show that $c_t(u_+(t),u_-(t))$ is a lower bound for the dissipation $\mu(\{t\})$ at a jump point $t$.
\begin{prop}
Assume {\bf (F0)-(F5)}. Let $c_t$ be the cost function defined in \eqref{costfun}, $u_-(t)$ and $u_+(t)$ be the left and right limits, respectively, of the function $u$ of {\rm Theorem~\ref{compact}} at each point $t$. Then it holds
\begin{equation}
F_t(u_-(t))-F_t(u_+(t))\geq c_t(u_+(t),u_-(t)),\quad \forall t\in (0,T].
\label{lowbounddiss}
\end{equation}
\label{proposizione1}
If additionally $\epsilon \dot u_\epsilon(0) \to 0$, the above inequality also holds for $t=0$.
\end{prop}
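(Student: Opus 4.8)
The plan is to prove the equivalent inequality $c_t(u_-(t),u_+(t))\le F_t(u_-(t))-F_t(u_+(t))$; by symmetry of the cost (Theorem~\ref{cost}(1)) this is the same as \eqref{lowbounddiss}, and if $u_-(t)=u_+(t)$ it is trivial since then $c_t(u_-(t),u_+(t))=0$ by Theorem~\ref{cost}(2). So assume $u_-(t)\neq u_+(t)$. I would produce a recovery sequence of admissible curves for $c_t$ by rescaling the dynamic solutions $u_\epsilon$ on the fast time scale around $t$. Using the pointwise convergence $u_\epsilon\to u$ and the fact that $u$ is regulated (so $u(r)\to u_\mp(t)$ as $r\to t^\mp$) from Theorem~\ref{compact}, together with Corollary~\ref{a.e.}, a diagonal argument yields times $a_\epsilon<t<b_\epsilon$ with $a_\epsilon\to t$, $b_\epsilon\to t$ and
\[
u_\epsilon(a_\epsilon)\to u_-(t),\quad u_\epsilon(b_\epsilon)\to u_+(t),\quad \epsilon\dot u_\epsilon(a_\epsilon)\to0,\quad \epsilon\dot u_\epsilon(b_\epsilon)\to0.
\]
Evaluating the energy identity \eqref{zerostima} between $a_\epsilon$ and $b_\epsilon$, letting $\epsilon\to0$, and using \textbf{(F0)}, the uniform bound on $r\mapsto\partial_r F(r,u_\epsilon(r))$ guaranteed by \textbf{(F2)}, \eqref{equibound} and Proposition~\ref{apriori}(i), and the four limits above, one obtains the key relation
\[
\epsilon\int_{a_\epsilon}^{b_\epsilon}\|\dot u_\epsilon(r)\|_B^2\,\mathrm dr\ \longrightarrow\ F_t(u_-(t))-F_t(u_+(t)).
\]

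Next I would convert this dissipation into the energy of a competitor. Put $N_\epsilon:=(b_\epsilon-a_\epsilon)/\epsilon$ and $v_\epsilon(\sigma):=u_\epsilon(a_\epsilon+\epsilon\sigma)$ for $\sigma\in[0,N_\epsilon]$. Since $u_\epsilon$ solves \eqref{mainequation}, the rescaled curve satisfies $\nabla_xF(a_\epsilon+\epsilon\sigma,v_\epsilon)+A\ddot v_\epsilon=-B\dot v_\epsilon$; setting $e_\epsilon(\sigma):=\nabla_xF_t(v_\epsilon(\sigma))-\nabla_xF(a_\epsilon+\epsilon\sigma,v_\epsilon(\sigma))$, so that $\|e_\epsilon(\sigma)\|\le\int_{a_\epsilon}^{b_\epsilon}a_M(r)\,\mathrm dr=:\rho_\epsilon$ by \textbf{(F3)}, the identity \eqref{simpleid} and the Cauchy--Schwarz inequality \eqref{CSequi}, together with the change of variables $r=a_\epsilon+\epsilon\sigma$, give
\[
\tfrac12\int_{0}^{N_\epsilon}\|\nabla_xF_t(v_\epsilon)+A\ddot v_\epsilon\|_{B^{-1}}^2+\|\dot v_\epsilon\|_B^2\,\mathrm d\sigma=\epsilon\int_{a_\epsilon}^{b_\epsilon}\|\dot u_\epsilon\|_B^2\,\mathrm dr+\mathrm{Err}_\epsilon,
\]
where $|\mathrm{Err}_\epsilon|\le C\rho_\epsilon\sqrt{N_\epsilon}+C\rho_\epsilon^2N_\epsilon$, the $\sqrt{N_\epsilon}$ arising from estimating the length of $v_\epsilon$ by Cauchy--Schwarz against the (bounded) quantity $\epsilon\int_{a_\epsilon}^{b_\epsilon}\|\dot u_\epsilon\|_B^2$. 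Choosing the window to contract to $t$ at a rate tuned to $\epsilon$, so that $\rho_\epsilon^2 N_\epsilon\to0$ — possible by the absolute continuity of $r\mapsto a_M(r)$, i.e.\ by \textbf{(F3)} — makes $\mathrm{Err}_\epsilon\to0$.

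The curve $v_\epsilon$ is not yet admissible, so I would glue, at $\sigma=0$ and $\sigma=N_\epsilon$, short cubic--Hermite bridges — built from the polynomials $g,h,\ell$ appearing in the proof of Theorem~\ref{cost}(5) — matching $v_\epsilon$ in value and first derivative and joining it, with vanishing velocity, to $u_-(t)$ on the left and to $u_+(t)$ on the right; extending by constants and re-centering the interval produces $\tilde v_\epsilon\in V^{t,N}_{u_-(t),u_+(t)}$ for a suitable $N$. Crucially, $u_-(t),u_+(t)\in\mathcal C(t)$ by Theorem~\ref{compact}(ii), so on these bridges $\nabla_xF_t$ is of the same order as $\|u_\epsilon(a_\epsilon)-u_-(t)\|+\|\epsilon\dot u_\epsilon(a_\epsilon)\|$ (and its analogue at $b_\epsilon$), and the bridges contribute $o(1)$ to the energy. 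Passing to the limit in the energy of $\tilde v_\epsilon$ and recalling \eqref{costfun} gives $c_t(u_-(t),u_+(t))\le F_t(u_-(t))-F_t(u_+(t))$, hence \eqref{lowbounddiss} by Theorem~\ref{cost}(1).

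Finally, the endpoints are handled one-sidedly. For $t=0$ one uses the window $[0,b_\epsilon]$; the extra hypothesis $\epsilon\dot u_\epsilon(0)\to0$ supplies the missing left-endpoint control, and since $u(0)$ need not be critical the left bridge is taken over a length $\ell_\epsilon\to0$ chosen small enough that the $\|\nabla_xF_0\|_{B^{-1}}^2$ term is negligible yet large enough that $\|u_\epsilon(0)-u(0)\|^2/\ell_\epsilon^3$ and $\|\epsilon\dot u_\epsilon(0)\|^2/\ell_\epsilon$ still vanish. For $t=T$ one uses $[a_\epsilon,T]$; as $\epsilon\dot u_\epsilon(T)$ is only bounded (Proposition~\ref{apriori}(ii)), the rescaled curve is prolonged beyond $N_\epsilon$ by a solution $w$ of the autonomous frozen equation $A\ddot w+B\dot w+\nabla_xF_T(w)=0$ starting from $(u_\epsilon(T),\epsilon\dot u_\epsilon(T))$: along $w$ the integrand in \eqref{costfun} equals $\|\dot w\|_B^2$, whose total is $\tfrac{\epsilon^2}{2}\|\dot u_\epsilon(T)\|_A^2+F_T(u_\epsilon(T))-F_T(w(+\infty))$, and by \textbf{(F5)} $w$ relaxes to a critical point of $F_T$, at which a last vanishing bridge closes the curve. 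I expect the main obstacle to be precisely the simultaneous tuning in the second step: the window $[a_\epsilon,b_\epsilon]$ must approach $t$ slowly enough that $u_\epsilon(a_\epsilon)\to u_-(t)$ and $u_\epsilon(b_\epsilon)\to u_+(t)$ — the convergence $u_\epsilon\to u$ being only pointwise — while approaching $t$ fast enough, in terms of $\int_{a_\epsilon}^{b_\epsilon}a_M$ versus $\epsilon$, to annihilate the time-freezing error $\mathrm{Err}_\epsilon$; reconciling these two requirements is the delicate core of the argument.
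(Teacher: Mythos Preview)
Your proposal has a genuine gap, and you in fact locate it yourself in the final paragraph without resolving it. The time-freezing error satisfies $|\mathrm{Err}_\epsilon|\lesssim \rho_\epsilon\sqrt{N_\epsilon}+\rho_\epsilon^2 N_\epsilon$ with $N_\epsilon=(b_\epsilon-a_\epsilon)/\epsilon$, and you need this to vanish. But the diagonal argument gives you no freedom to tune $\epsilon$ against $(b_\epsilon-a_\epsilon)$: for fixed $a_\epsilon,b_\epsilon$ the requirement $u_\epsilon(a_\epsilon)\to u_-(t)$ (via $|u_\epsilon(a_\epsilon)-u(a_\epsilon)|\to0$, which is only pointwise) forces $\epsilon$ below a threshold depending on an \emph{uncontrolled} rate of convergence. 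Making $\epsilon$ smaller only makes $N_\epsilon$ larger, while $\rho_\epsilon$ depends solely on $a_\epsilon,b_\epsilon$; so $\rho_\epsilon^2 N_\epsilon$ can blow up. Under {\bf(F0)--(F5)} alone there is no mechanism to prevent this, and the single rescaled curve $v_\epsilon$ is simply too long to freeze time on.

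The paper's proof avoids this obstruction by an extra decomposition that you are missing. After the same diagonal extraction of $s_k,t_k,\epsilon_k$, it distinguishes whether $\limsup_k (t_k-s_k)/\epsilon_k$ is finite (then your argument essentially works) or infinite. In the infinite case, it uses {\bf(F5)} and the $L^2$ bounds of Proposition~\ref{apriori}(iv),(vii) together with the mean value theorem to carve $[s_k,t_k]$ into at most $N_t=\#(\mathcal{C}(t)\cap B_M)$ subintervals $[t_k^{i,-},t_k^{i,+}]$, each of \emph{bounded} rescaled length $(t_k^{i,+}-t_k^{i,-})/\epsilon_k\le M/\eta$, with endpoints $\delta$-close to distinct critical points $u^{i-1},u^i\in\mathcal{C}_M(t)$ and with $\epsilon_k\dot u_{\epsilon_k}$ small there. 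On each short piece the time-freezing error is now harmless (it is $\omega(|t_k^{i,\pm}-t|)$ times a bounded length), one builds a competitor for $c_t(u^{i-1},u^i)$ by the Hermite bridges you describe, and the triangle inequality of Theorem~\ref{cost}(4) reassembles the chain into a bound for $c_t(u_-(t),u_+(t))$. This intermediate passage through finitely many critical points is the missing idea.
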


\proof 
Let $u_\epsilon, u$ be as in the statement of Theorem~\ref{compact}. We restrict to the case $t\in J$, since for any $t\in [0,T]\backslash J$ \eqref{lowbounddiss} holds as an equality in view of Theorem~\ref{cost}(2). If $t=0$ we  convene that the function $u_\epsilon$ is extended to a left neighborhood of $0$ with an affine function of constant slope $\dot u_\epsilon(0)$. First, we note that we can find sequences $s_k\nearrow t$ and $t_k\searrow t$ and a subsequence $\epsilon_k\to0$ such that
\begin{align}
&u_{\epsilon_k}(s_k)\to u_-(t),\, u_{\epsilon_k}(t_k)\to u_+(t),\label{hyp1}\\
&\epsilon_k\dot{u}_{\epsilon_k}(s_k)\to 0,\, \epsilon_k\dot{u}_{\epsilon_k}(t_k)\to 0,\label{hyp2}
\end{align}
as $k\to+\infty$. For this, we preliminary remark that, by virtue of Corollary~\ref{a.e.} (and of the assumption $\epsilon \dot u_\epsilon(0) \to 0$ in the case $t=0$), we can fix two sequences $s_k\nearrow t$ and $t_k\searrow t$ such that
\begin{equation}
\displaystyle\lim_{\epsilon\to0}\epsilon\dot{u}_\epsilon(s_k)=\lim_{\epsilon\to0}\epsilon\dot{u}_\epsilon(t_k)=0.
\label{ipo1}
\end{equation}
Moreover, since $u$ is regulated by Theorem~\ref{compact}(i), there also hold
\begin{equation}
u(s_k)\to u_-(t), \quad u(t_k)\to u_+(t).
\label{ipo2}
\end{equation}
Now, we define by induction on $k\geq1$ the sequence
\begin{equation*}
\epsilon_k:=\max\left\{\epsilon\leq\frac{\epsilon_{k-1}}{2}:\, \|u_\epsilon(t_k)-u(t_k)\|+\|u_\epsilon(s_k)-u(s_k)\|+\|\epsilon\dot{u}_{\epsilon}(s_k)\|+\|\epsilon\dot{u}_{\epsilon}(t_k)\|\leq\frac{1}{k}\right\},
\end{equation*}
which is well posed since, for every fixed $k$, it holds 
\begin{equation*}
\displaystyle\lim_{\epsilon\to0}\bigl[\|u_\epsilon(t_k)-u(t_k)\|+\|u_\epsilon(s_k)-u(s_k)\|+\|\epsilon\dot{u}_{\epsilon}(s_k)\|+\|\epsilon\dot{u}_{\epsilon}(t_k)\|\bigr]=0.
\end{equation*}
Along such subsequence, in view of \eqref{ipo1} and \eqref{ipo2}, we get \eqref{hyp1} and \eqref{hyp2}. As a consequence of {\bf (F0)} and \eqref{hyp1} we notice that
\begin{align}
&F_{s_k}(u_{\epsilon_k}(s_k))\to F_t(u_-(t)),\label{ipo3}\\
&F_{t_k}(u_{\epsilon_k}(t_k))\to F_t(u_+(t)),\label{ipo4}
\end{align}
as $k\to+\infty$.

The proof  distinguishes now between two cases:
\begin{itemize}
\item[]it holds either
\begin{align}\label{ilcasoA}
\displaystyle\mathop{\lim\sup}_k\frac{t_k-s_k}{\epsilon_k}=+\infty\,;
\end{align}
\item[] or
\begin{align}\label{ilcasoB}
\displaystyle\mathop{\lim\sup}_k\frac{t_k-s_k}{\epsilon_k}<+\infty\,.
\end{align}
\end{itemize}

{\it Proof if \eqref{ilcasoA} holds}. Let $\delta>0$ be fixed.  Define
\begin{equation}
\mathcal{C}_M(t):=\mathcal{C}(t)\cap B_M,
\label{CM}
\end{equation}
where $\mathcal{C}(t)$ is the set of critical points of $\nabla_xF_t(\cdot)$ as in \eqref{criticalset} and $M>0$ is such that $\|u_\epsilon(s)\|\leq M$, $s\in[0,T]$, as it follows by Proposition~\ref{apriori}(i). In the case $t=0$ we set 
\[
\mathcal{C}_M(0)=(\mathcal{C}(0)\cap B_M)\cup\{u_-(0)\}
\]
as the initial datum $u_-(0)$ might in general not be a critical point. In this case, we recall that, instead, the right limit $u_+(0)\in \mathcal{C}(0)$ by \eqref{(4.1)}. In both cases $t=0$ and $t \neq 0$, by assumption {\bf (F5)}, the set $\mathcal{C}_M(t)$ is finite, and we denote by $N_t$ its cardinality, $N_t:=\#(\mathcal{C}_M(t))<+\infty$. Since $\mathcal{C}_M(t)$ is finite, we may define the strictly positive value $d$ as
\begin{equation}
d=d_t:=\min\{\|w-z\|:\, w,z\in \mathcal{C}_M(t),\, w\neq z\}
\label{dt}
\end{equation}
and fix an arbitrary $\delta>0$ with the property that
\begin{equation}
\delta<\frac{d}{2}.
\label{smalldelta}
\end{equation}
With this,
\begin{equation*}
B_\delta(u^i)\cap B_\delta(u^j)=\emptyset,\quad \text{ for every } u^i,u^j\in\mathcal{C}_M(t),\quad i\neq j\,.
\end{equation*}
In particular, if for some $u\in B_M$, it holds
\begin{equation*}
\dist(u,\mathcal{C}_M(t))\leq\delta,
\end{equation*}
then there exists a unique $\bar{u}\in\mathcal{C}_M(t)$ such that
\begin{equation}
\dist(u,\mathcal{C}_M(t))=\|u-\bar{u}\|\leq\delta.
\label{uniqueproj}
\end{equation}
Since both $t_k\to t$ and $s_k\to t$, we may assume that $t_k-s_k\leq\delta$, for every $k\in\N$. Moreover, in view of the continuity of $\|\nabla_xF_{(\cdot)}(\cdot)\|$ ensured by {\bf(F0)}, we can fix $\eta>0$, with $\eta\leq\delta$, complying with the following property:
\begin{equation}
\text{if }(s,u)\in[s_k,t_k]\times B_M\text{ satisfies }\frac{1}{2}\|\nabla_x F_s(u)\|^2\leq\eta,\text{ then }\dist(u,\mathcal{C}_M(t))\leq\delta.
\label{(B)}
\end{equation}
We 
denote with $L$ a Lipschitz constant for $F_t(\cdot)$ on the set $B_M$.

We first prove the following\\
{\bf Claim:} for every $k\in\N$, there exists a finite collection of times
\begin{equation*}
s_k\le t_k^{1,-}<t_k^{1,+}\le \dots  \le t_k^{i,-}<t_k^{i,+}\le \dots \le  t_k^{{m_k},-}<t_k^{{m_k},+}\le t_k
\end{equation*}
with $m_k\leq N_t$, and a set of distinct critical points of $F_t(\cdot)$, say $\{u^1,\dots,u^{m_k}\}\subseteq \mathcal{C}(t)\cap B_M$, with  $u^{m_k}=u_+(t)$, such that, setting $u^0=u_-(t)$, the following properties are satisfied:
\begin{enumerate}
\item[(1)] $\|u_{\epsilon_k}(t^{1,-}_k)-u_{-}(t)\|=\delta$;
\item[(2)] $\|u_{\epsilon_k}(t^{m_k,+}_k)-u_{+}(t)\|\leq\delta$;
\item[(3a)] $\dist(u_{\epsilon_k}(t^{i,-}_k), \mathcal{C}_M(t))=\|u_{\epsilon_k}(t^{i,-}_k)-u^{i-1}\|=\delta$;
\item[(3b)] $\dist(u_{\epsilon_k}(t^{i,+}_k), \mathcal{C}_M(t))=\|u_{\epsilon_k}(t^{i,+}_k)-u^{i}\|\leq\delta$,\qquad \text{for every }$i=1,\dots, m_k$;
\item[(4)] $\displaystyle\mathop{\lim\sup}_{k\to +\infty}\frac{\epsilon_k^2}{2}\|\dot{u}_{\epsilon_k}(t_k^{i,-})\|_A^2\leq (2L+1)\delta\,,\quad\mathop{\lim\sup}_{k\to +\infty} \frac{\epsilon_k^2}{2}\|\dot{u}_{\epsilon_k}(t_k^{i,+})\|_A^2\leq \delta$;
\item[(5)] $\|u_{\epsilon_k}(t)-u^{i-1}\|>\delta$ for every $t>t^{i,-}_k$ and every $i=1,\dots,m_k$;
\item[(6)] $\displaystyle\mathop{\lim\sup}_{k\to +\infty}\frac{t^{i,+}_k-t^{i,-}_k}{\epsilon_k}\leq\frac{M}{\eta}<+\infty$.
\end{enumerate}
Notice that (3a), (3b) and (5) together imply that  $u^i\neq u^j$ for every $i,j$ with $i\neq j$. In order to prove the Claim, we will perform an algorithmic construction.\\ 
\\
{\bf Step 1.} Since $\|u_{\epsilon_k}(s_k)-u_-(t)\|\to0$ as $k\to+\infty$ and
\begin{equation*}
\displaystyle\mathop{\lim\inf}_{k\to\infty}\|{u}_{\epsilon_k}(t_k)-u_-(t)\|\geq\|u_+(t)-u_-(t)\|\geq d,
\end{equation*} 
it is well defined
\begin{equation}
t_k^{1,-}:=\max\{s\in[s_k,t_k]:\, \|u_{\epsilon_k}(s)-u_-(t)\|\leq\delta\},
\label{t-}
\end{equation}
and it satisfies $s_k\leq t_k^{1,-}<t_k$, $t_k^{1,-}\to t$ and then $t_k^{1,-}-s_k\to0$. Moreover
\begin{equation}
{\rm dist}({u}_{\epsilon_k}(t_k^{1,-}),\mathcal{C}_M(t))=\|{u}_{\epsilon_k}(t_k^{1,-})-u_-(t)\|=\delta.
\label{stimata}
\end{equation}
Observe that from {\bf (F0)}, \eqref{hyp1}, and \eqref{ipo3} we get
\begin{equation*}
\begin{split}
\displaystyle\mathop{\lim\sup}_{k\to+\infty} [F_{s_k}({u}_{\epsilon_k}(s_k))-F_{t_k^{1,-}}({u}_{\epsilon_k}(t_k^{1,-}))]&=\displaystyle\mathop{\lim\sup}_{k\to+\infty}[F_t(u_-(t))-F_t({u}_{\epsilon_k}(t_k^{1,-}))]\\
&\leq\displaystyle\mathop{\lim\sup}_{k\to+\infty}L\|{u}_{\epsilon_k}(t_k^{1,-})-u_-(t)\|\leq L\delta.
\end{split}
\end{equation*}
With this, rewriting the energy inequality \eqref{unostima} for $s=s_k$ and $t=t_k^{1,-}$, namely
\begin{equation*}
\begin{split}
\frac{\epsilon_k^2}{2}\|\dot{u}_{\epsilon_k}(t_k^{1,-})\|_A^2&\leq F_{s_k}({u}_{\epsilon_k}(s_k))-F_{t_k^{1,-}}({u}_{\epsilon_k}(t_k^{1,-}))+\frac{\epsilon_k^2}{2}\|\dot{u}_{\epsilon_k}(s_k)\|_A^2\\
&+\int_{s_k}^{t_k^{1,-}}\partial_r F_r(u_{\epsilon_k}(r))\,\mathrm{d}r,
\end{split}
\end{equation*}
and using \eqref{hyp2} we deduce
\begin{equation}
\displaystyle\mathop{\lim\sup}_{k\to+\infty}\frac{\epsilon_k^2}{2}\|\dot{u}_{\epsilon_k}(t_k^{1,-})\|_A^2\leq L\delta.
\label{(C)}
\end{equation}

We define a sequence of times $t^{1,+}_k>t^{1,-}_k$ as follows. From the bounds (iv) and (vii) of Proposition~\ref{apriori}, we have that 
\begin{equation}
\frac{1}{\epsilon_k}\int_{t^{1,-}_k}^{t^{1,-}_k+\frac{M}{\eta}\epsilon_k}\frac{1}{2}\|\nabla_x F_s(u_{\epsilon_k}(s))\|^2+\frac{\epsilon_k^2}{2}\|\dot{u}_{\epsilon_k}(s)\|^2\,\mathrm{d}s\leq M.
\label{(D)}
\end{equation}
If $t_k<t^{1,-}_k+\frac{M}{\eta}\epsilon_k$, we set $t^{1,+}_k:=t_k$. In this case, we deduce that
\begin{equation*}
\lim_{k\to+\infty}\left[\dist(u_{\epsilon_k}(t^{1,+}_k), \mathcal{C}_M(t))+\frac{\epsilon_k^2}{2}\|\dot{u}_{\epsilon_k}(t^{1,+}_k)\|^2\right]=0,
\end{equation*}
which in particular implies that
\begin{equation*}
\lim_{k\to+\infty}\left[\dist(u_{\epsilon_k}(t^{1,+}_k), \mathcal{C}_M(t))+\frac{\epsilon_k^2}{2}\|\dot{u}_{\epsilon_k}(t^{1,+}_k)\|^2\right]\leq\delta.
\end{equation*}
If instead $t_k>t^{1,-}_k+\frac{M}{\eta}\epsilon_k$, by \eqref{(D)} and the Mean Value Theorem there exists $t^{1,+}_k>t^{1,-}_k$ such that
\begin{equation*}
\frac{1}{2}\|\nabla_x F_{t^{1,+}_k}(u_{\epsilon_k}(t^{1,+}_k))\|^2+\frac{\epsilon_k^2}{2}\|\dot{u}_{\epsilon_k}(t^{1,+}_k)\|^2\leq\eta\,.
\end{equation*}
This implies
\begin{equation*}
\dist(u_{\epsilon_k}(t^{1,+}_k), \mathcal{C}_M(t))\leq\delta\,\,\text{ and }\,\,\frac{\epsilon_k^2}{2}\|\dot{u}_{\epsilon_k}(t^{1,+}_k)\|_A^2\leq\delta.
\end{equation*}
By \eqref{smalldelta}  and \eqref{uniqueproj} there exists a unique $u^1\in\mathcal{C}_M(t)$ such that
\begin{equation}
\dist(u_{\epsilon_k}(t^{1,+}_k), \mathcal{C}_M(t))=\|u_{\epsilon_k}(t^{1,+}_k)-u^1\|\leq\delta.
\label{(5.23)}
\end{equation}
Moreover, by construction, it holds
\begin{equation*}
\displaystyle\mathop{\lim\sup}_k\frac{t^{1,+}_k-t^{1,-}_k}{\epsilon_k}\leq \frac{M}{\eta}<+\infty,
\end{equation*}
while (5) is satisfied by \eqref{t-}. Now, if $u^1=u_+(t)$, the Claim is proved with $m_k=1$. Otherwise, the construction goes on.
\\
{\bf Step 2:} Assume that a collection 
\begin{equation*}
s_k\le t_k^{1,-}<t_k^{1,+}\le \dots  \le t_k^{j,-}<t_k^{j,+}\le t_k
\end{equation*}
has been constructed for $1\le j\le i$, such that all the properties in the Claim are satisfied with the exception of (2). By \eqref{hyp1} it then must be $u^{i}\neq u_+(t)$, $t_k^{j,+}\neq t_k$ and it is therefore well defined
\begin{equation}\label{t-new}
t_k^{i+1,-}:=\max\{s\in[s_k,t_k]:\, \|u_{\epsilon_k}(s)-u^{i}\|\leq\delta\}.
\end{equation}
By construction it holds $t_k^{i,+}\le t_k^{i+1,-}<t_k$; we have also
\begin{equation*}
\dist({u}_{\epsilon_k}(t_k^{i+1,-}),\mathcal{C}_M(t))=\|{u}_{\epsilon_k}(t_k^{i+1,-})-{u}^{i}\|=\delta.
\end{equation*}
Moreover, it holds the estimate
\begin{equation}
\displaystyle\mathop{\lim\sup}_{k\to+\infty}\frac{\epsilon_k^2}{2}\|\dot{u}_{\epsilon_k}(t_k^{i+1,-})\|_A^2\leq (2L+1)\delta\,.
\label{(Cbis)}
\end{equation}
This can be proved with an analogous argument as for \eqref{(C)}, replacing $s_k$ with $t_k^{i,+}$ and $t_k^{1,-}$ with $t_k^{i+1,-}$, using the bound provided by (4) for $\epsilon_k\|\dot u_{\epsilon_k}(t_k^{i,+})\|^2$. Observe indeed that $t_k^{i+1,-}-t_k^{i,+}\le t_k-s_k\to 0$ and that, by construction, $u_{\epsilon_k}(t_k^{i,+})$ and $u_{\epsilon_k}(t_k^{i+1,-})$ are close to the same point $u^i \in \mathcal{C}_M(t)$.

Now, as in the proof of Step 1, we have the bound
\begin{equation}
\frac{1}{\epsilon_k}\int_{t^{i+1,-}_k}^{t^{i+1,-}_k+\frac{M}{\eta}\epsilon_k}\frac{1}{2}\|\nabla_x F_s(u_{\epsilon_k}(s))\|^2+\frac{\epsilon_k^2}{2}\|\dot{u}_{\epsilon_k}(s)\|^2\,\mathrm{d}s\leq M.
\label{(Dbis)}
\end{equation} 
If $t_k<t^{i+1,-}_k+\frac{M}{\eta}\epsilon_k$, we set $t^{i+1,+}_k:=t_k$. Otherwise, by \eqref{(Dbis)} and the Mean Value Theorem there exists $t^{i+1,+}_k>t^{i+1,-}_k$ such that
\begin{equation*}
\frac{1}{2}\|\nabla_x F_{t^{i+1,+}_k}(u_{\epsilon_k}(t^{i+1,+}_k))\|^2+\frac{\epsilon_k^2}{2}\|\dot{u}_{\epsilon_k}(t^{i+1,+}_k)\|^2\leq\eta\,.
\end{equation*}
This implies
\begin{equation*}
\dist(u_{\epsilon_k}(t^{i+1,+}_k), \mathcal{C}_M(t))\leq\delta\,\,\text{ and }\,\,\frac{\epsilon_k^2}{2}\|\dot{u}_{\epsilon_k}(t^{i+1,+}_k)\|_A^2\leq\delta.
\end{equation*}
With \eqref{smalldelta}  and \eqref{uniqueproj} we again have that there exists a unique $u^{i+1}\in\mathcal{C}_M(t)$ such that
\begin{equation*}
\dist(u_{\epsilon_k}(t^{i+1,+}_k), \mathcal{C}_M(t))=\|u_{\epsilon_k}(t^{i+1,+}_k)-u^{i+1}\|\leq\delta.
\end{equation*}
Moreover, by construction, it holds
\begin{equation}\label{tempivicini}
\displaystyle\mathop{\lim\sup}_k\frac{t^{i+1,+}_k-t^{i+1,-}_k}{\epsilon_k}\leq \frac{M}{\eta}<+\infty,
\end{equation}
while (5) is satisfied by \eqref{t-}. Together with (3a) and (3b) this gives $u^{i+1}\neq u^{j}$ for all $j\le i$. With this, since $ \mathcal{C}_M(t)$ has a finite cardinality $N_t$ and recalling \eqref{hyp1}, in a finite number of steps $m_k\le N_t$ we will get property (2), concluding the proof of the Claim.

Let us go back to the main proof. Since $m_k\leq N_t$, up to passing to a subsequence, we may assume $m_k=m$ for any $k$, with $m$ independent of $k$. 
We also observe that, combining \eqref{mainequation} with \eqref{simpleid}, applied with $Q=B$, $z_1=\nabla_x F_r(u_{\epsilon_k}(r))+\epsilon_k^2A\ddot{u}_{\epsilon_k}(r)$, and $z_2=\epsilon_k \dot{u}_{\epsilon_k}(r)$ we get
\[
\begin{split}
0&=\|\nabla_x F_r(u_{\epsilon_k}(r))+\epsilon_k^2A\ddot{u}_{\epsilon_k}(r)\|_{B^{-1}}^2+2\langle \nabla_x F_r(u_{\epsilon_k}(r))+\epsilon_k^2A\ddot{u}_{\epsilon_k}(r), \epsilon_k \dot{u}_{\epsilon_k}(r)\rangle\\
&+\epsilon_k^2\|\dot{u}_{\epsilon_k}(r)\|_B^2
\end{split}
\]
for all $r \in [0, T]$.
With \eqref{hyp1} and \eqref{hyp2}, this gives
\begin{equation}\label{servedopo}
\begin{split}
&F_t(u_-(t))-F_t(u_+(t))\\
&=\displaystyle\lim_{k\to+\infty}\bigl[F_{s_k}(u_{\epsilon_k}(s_k))-F_{t_k}(u_{\epsilon_k}(t_k))
+ \frac{\epsilon_k^2}{2}\|\dot{u}_{\epsilon_k}(s_k)\|_A^2-\frac{\epsilon_k^2}{2}\|\dot{u}_{\epsilon_k}(t_k)\|_A^2\\
&+\int_{s_k}^{t_k}\partial_r F_r(u_{\epsilon_k}(r))\,\mathrm{d}r\bigr]\\
&=\displaystyle\lim_{k\to+\infty}\frac{1}{\epsilon_k}\int_{s_k}^{t_k}-\langle \nabla_x F_r(u_{\epsilon_k}(r))+\epsilon_k^2A\ddot{u}_{\epsilon_k}(r), \epsilon_k \dot{u}_{\epsilon_k}(r)\rangle\,\mathrm{d}r\\
&=\displaystyle\lim_{k\to+\infty}\frac{1}{2\epsilon_k}\int_{s_k}^{t_k}\|\nabla_x F_r(u_{\epsilon_k}(r))+\epsilon_k^2A\ddot{u}_{\epsilon_k}(r)\|_{B^{-1}}^2+\epsilon_k^2\|\dot{u}_{\epsilon_k}(r)\|_B^2\,\mathrm{d}r\\
&\geq \displaystyle\mathop{\lim\inf}_{k\to+\infty}\sum_{i=1}^m\frac{1}{2\epsilon_k}\int_{t_k^{i,-}}^{t_k^{i,+}}\|\nabla_x F_r(u_{\epsilon_k}(r))+\epsilon_k^2A\ddot{u}_{\epsilon_k}(r)\|_{B^{-1}}^2+\epsilon_k^2\|\dot{u}_{\epsilon_k}(r)\|_B^2\,\mathrm{d}r.
\end{split}
\end{equation}
Let $\omega$ be a modulus of continuity for $\nabla_x F_{(\cdot)}(\cdot)$ on $[0,T]\times B_M$. By assumption {\bf(F0)}, the $L^\infty$-bounds of Proposition \ref{apriori} (i) and (iii), and \eqref{tempivicini} we note that, for each fixed $i=1,\dots,m$, it results
\begin{equation*}
\begin{split}
\frac{1}{2\epsilon_k}&\Bigl|\int_{t_k^{i,-}}^{t_k^{i,+}}\|\nabla_x F_r(u_{\epsilon_k}(r))+\epsilon_k^2A\ddot{u}_{\epsilon_k}(r)\|_{B^{-1}}^2-\|\nabla_x F_t(u_{\epsilon_k}(r))+\epsilon_k^2A\ddot{u}_{\epsilon_k}(r)\|_{B^{-1}}^2\,\mathrm{d}r\Bigr|\\
&\leq \frac{1}{2\epsilon_k}\int_{t_k^{i,-}}^{t_k^{i,+}}\left|\|\nabla_x F_r(u_{\epsilon_k}(r))\|_{B^{-1}}^2-\|\nabla_x F_t(u_{\epsilon_k}(r))\|_{B^{-1}}^2\,\right|\mathrm{d}r\\
&+\frac{\|B^{-1}\|}{\epsilon_k}\int_{t_k^{i,-}}^{t_k^{i,+}}\|\nabla_x F_r(u_{\epsilon_k}(r))-\nabla_x F_t(u_{\epsilon_k}(r))\|\|\epsilon_k^2A\ddot{u}_{\epsilon_k}(r)\|\,\mathrm{d}r\\
&\leq C\frac{(t_k^{i,+}-t_k^{i,-})}{\epsilon_k}\left[\max\{|t_k^{i,+}-t|,|t_k^{i,-}-t|\}+\omega(\max\{|t_k^{i,+}-t|,|t_k^{i,-}-t|\})\right]\\
&\leq C\frac{M}{\eta}\left[\max\{|t_k^{i,+}-t|,|t_k^{i,-}-t|\}+\omega(\max\{|t_k^{i,+}-t|,|t_k^{i,-}-t|\})\right]\to0
\end{split}
\end{equation*}
as $k\to+\infty$. Thus, we get
\begin{equation}
\begin{split}
&F_t(u_-(t))-F_t(u_+(t))\\
&\geq \displaystyle\mathop{\lim\inf}_{k\to+\infty}\sum_{i=1}^m\frac{1}{2\epsilon_k}\int_{t_k^{i,-}}^{t_k^{i,+}}\|\nabla_x F_t(u_{\epsilon_k}(r))+\epsilon_k^2A\ddot{u}_{\epsilon_k}(r)\|_{B^{-1}}^2+\epsilon_k^2\|\dot{u}_{\epsilon_k}(r)\|_B^2\,\mathrm{d}r.
\end{split}
\end{equation}
With fixed $i\in\{1,\dots,m\}$, we set
\begin{equation*}
v_k(\tau):=u_{\epsilon_k}(\epsilon_k\tau+t_k^{i,-}),\quad \text{ for every }\tau\in\R.
\end{equation*}
From the $L^\infty$ bounds of Proposition \ref{apriori} (i)-(iii) we immediately deduce the equi-boundedness of $v_k$ in $W^{2,\infty}(\R)$. Moreover, through the change of variables $r=\epsilon_k\tau+t_k^{i,-}$ we obtain
\begin{equation*}
\begin{split}
&\displaystyle\frac{1}{2\epsilon_k}\int_{t_k^{i,-}}^{t_k^{i,+}}\|\nabla_x F_t(u_{\epsilon_k}(r))+\epsilon_k^2A\ddot{u}_{\epsilon_k}(r)\|_{B^{-1}}^2+\epsilon_k^2\|\dot{u}_{\epsilon_k}(r)\|_B^2\,\mathrm{d}r\\
&=\displaystyle\frac{1}{2}\int_{0}^{\frac{t_k^{i,+}-t_k^{i,-}}{\epsilon_k}}\|\nabla_x F_t(v_{k}(\tau))+A\ddot{v}_{k}(\tau)\|_{B^{-1}}^2+\|\dot{v}_{k}(\tau)\|_{B}^2\,\mathrm{d}\tau.
\end{split}
\end{equation*}
In order to simplify notation, here and in the following we will denote by $\sigma^{i}_k$ the ratio $\frac{t_k^{i,+}-t_k^{i,-}}{\epsilon_k}$. The bounds (1)-(4) can be re-read for $v_k$ as
\begin{equation}\label{rewrite}
\begin{split}
\|v_k(0)-u^{i-1}\|\leq\delta, \quad \|v_k(\sigma^{i}_k)-u^i\|\leq\delta,\\
\|\dot{v}_k(0)\|_A^2\leq 2(2L+1)\delta, \quad \|\dot{v}_k(\sigma^{i}_k)\|_A^2\leq2\delta\,.
\end{split}
\end{equation}
Consider the functions
\begin{equation*}
g(p)=3p^2-2p^3,\quad h(p)=-p(1-p),\quad p\in[0,1],
\end{equation*} and the competitor
\begin{equation*}
\tilde{v}_k(\tau)=
\begin{cases}
u^{i-1}, & \tau\leq-1,\\
u^{i-1}+g(\tau+1)(v_k(0)-u^{i-1})+h(\tau+1)\dot{v}_k(0), & \tau\in[-1,0],\\
v_k(\tau), & \tau\in[0,\sigma^i_k],\\
v_k(\sigma^i_k)+g(\tau-\sigma^i_k) (u^i-v_k(\sigma^i_k))-h(\tau-\sigma^i_k)\dot{v}_k(\sigma^i_k), & \tau\in[\sigma^i_k,\sigma^i_k+1],\\
u^i, & \tau\geq \sigma^i_k+1.
\end{cases}
\end{equation*}
Fix an arbitrary $N \in \mathbb{N}$ with $2N+1>\sigma^i_k+1$, $\tilde{v}_k \in V^{t, N}_{u^{i-1}, u^i}$. Since $u^i \in \mathcal{C}(t)$ for all $i\ge 1$ (notice that this holds also for $t=0$), exploiting \eqref{rewrite} we  obtain that there exists a uniform constant $C$ such that
\begin{equation}
\begin{split}
&\frac{1}{2}\int_{-1}^{2N+1}\|\nabla_xF_t(\tilde{v}_k(\tau))+A\ddot{\tilde{v}}_k(\tau)\|_{B^{-1}}^2+\|\dot{\tilde{v}}_k(\tau)\|_B^2\,\mathrm{d}\tau\\
&\leq \frac{1}{2}\int_{0}^{\sigma^i_k}\|\nabla_xF_t(v_k(\tau))+A\ddot{v}_k(\tau)\|_{B^{-1}}^2+\|\dot{v}_k(\tau)\|_B^2\,\mathrm{d}\tau\\
&+2 [C\delta+\omega^2(C\sqrt{\delta})],
\end{split}
\label{(c*)}
\end{equation}
where $\omega$ is a modulus of continuity for $\nabla_xF_t(\cdot)$ on $B_M$. Above we additionally exploited \eqref{smalldelta} to estimate $\delta+\sqrt{\delta}$ only in terms of $\sqrt{\delta}$ inside the function $\omega^2$.  

With the time translation $\hat v_k(s)=\tilde{v}_k(\tau+N-1)$ we get a competitor for $c_t(u^{i-1}, u^{i})$ with
\[
\begin{split}
\int_{-1}^{2N+1}\|\nabla_xF_t(\tilde{v}_k(\tau))+A\ddot{\tilde{v}}_k(\tau)\|_{B^{-1}}^2+\|\dot{\tilde{v}}_k(\tau)\|_B^2\,\mathrm{d}\tau=\\
\int_{-N}^{N}\|\nabla_xF_t(\hat{v}_k(s))+A\ddot{\hat{v}}_k(s)\|_{B^{-1}}^2+\|\dot{\hat{v}}_k(s)\|_B^2\,\mathrm{d}s\,.
\end{split}
\]
Then, from \eqref{(c*)} and condition (4) of Theorem~\ref{cost}, we get
\begin{equation*}
\begin{split}
c_t(u^0,u^m)&\leq \sum_{i=1}^m c_t(u^{i-1},u^i)\\
&\leq 2m[C\delta+\omega^2(C\sqrt{\delta})]+\sum_{i=1}^m\frac{1}{2}\int_{0}^{\sigma^i_k}\|\nabla_xF_t(v_k(\tau))+A\ddot{v}_k(\tau)\|_{B^{-1}}^2+\|\dot{v}_k(\tau)\|_B^2\,\mathrm{d}\tau,
\end{split}
\end{equation*}
whence, passing to the limit as $k\to+\infty$, we finally obtain
\begin{equation*}
\begin{split}
c_t(u^0,u^m)&\leq 2m[C\delta+\omega^2(C\sqrt{\delta})]+ F_t(u_-(t))-F_t(u_+(t))\\
& \leq 2N_t[C\delta+\omega^2(C\sqrt{\delta})]+ F_t(u_-(t))-F_t(u_+(t)),
\end{split}
\end{equation*}
from which, by the arbitrariness of $\delta>0$ and taking into account the symmetry of the cost function $c_t(u_-(t),u_+(t))=c_t(u_+(t),u_-(t))$, we deduce 
\begin{equation*}
c_t(u_-(t),u_+(t))\leq F_t(u_-(t))-F_t(u_+(t)),
\end{equation*}
since by (1) and (2) of Claim it must be $u^0=u_-(t)$ and $u^m=u_+(t)$. This concludes the proof, if \eqref{ilcasoA} holds.

{\it Proof if \eqref{ilcasoB} holds}. In this case, one does not need to prove the Claim, since trajectories already move from $u_-(t)$ to $u_+(t)$ in a time interval whose  length is a $O(\epsilon_k)$. One can then perform an analogous argument as in the previous case with $m=1$, $t^{1,-}_k=s_k$ and $t^{1,+}_k=t_k$, starting from the chain of inequalities in \eqref{servedopo}. Observe indeed that \eqref{ilcasoB} replaces \eqref{tempivicini}, while the conditions in \eqref{rewrite}, with $i=1$, are in this case automatically satisfied because of \eqref{hyp1}, and \eqref{hyp2}.
\endproof

\begin{oss}\label{equality}
As a consequence of Theorem~\ref{cost}(3) and Proposition~\ref{proposizione1}, we get the equality
\begin{equation*}
F_t(u_-(t))-F_t(u_+(t))=c_t(u_+(t),u_-(t)),\quad \forall t\in[0,T].
\end{equation*}
\end{oss}

We conclude with the following theorem that summarizes the results of this section and characterizes $u$ as a Balanced Viscosity solution of the problem
\begin{equation*}
\nabla_x F_t(u(t))=0\,\text{ in $X$}\quad\text{ for a.e. $t\in[0,T]$.}
\end{equation*}

\begin{thm}\label{main1}
Assume that {\bf (F0)-(F6)} hold, with {\bf(F3')} in place of {\bf(F3)}, and let ${u}_{\epsilon}: [0,T]\longrightarrow X$ be the solution of the Cauchy problem associated to \eqref{mainequation} with initial condition at $t=0$ and $u_\epsilon(0)$ be uniformly bounded, $\epsilon \dot{u}_\epsilon(0)\to0$, as $\epsilon\to0$. Let $c_t$ be the cost function defined in {\rm(\ref{costfun})}. Then, up to a subsequence independent of $t$, $(u_\epsilon)_\epsilon$ converge pointwise, as $\epsilon\to0$, to a function $u:[0,T]\longrightarrow X$. Moreover, $u$ satisfies the following properties:
\begin{enumerate}
\item[\rm(i)] $u$ is regulated;
\item[\rm(ii)] it holds
\begin{equation*}
\nabla_x F_t(u_+(t))=\nabla_xF_t(u_-(t))=0 \quad \text{in $X$ for every $t\in (0,T]$};
\end{equation*}
\item[\rm(iii)] $u$ fulfills the energy balance 
\begin{equation}
\begin{split}
F_t(u_+(t)) &+ \sum_{r\in J\cap[s,t]} c_r(u_-(r),u_+(r))=F_s(u_-(s))\\
    &+\int_s^t \partial_r F_r(u(r))\,dr,\quad \text{ for every }\,0\leq s\leq t\leq T.
\end{split}
\label{ebalance}
\end{equation}
\end{enumerate}
\label{bvthm}
\end{thm}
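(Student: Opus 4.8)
The plan is to assemble the statement from results already established, since Theorem~\ref{main1} is essentially a repackaging of Theorems~\ref{compact}, \ref{genbalance}, \ref{cost} and Proposition~\ref{proposizione1}. First I would invoke Theorem~\ref{compact}: under {\bf (F0)-(F5)} — all in force here — the solutions $u_\epsilon$ converge pointwise, along a $t$-independent subsequence, to a regulated function $u$ satisfying $\nabla_xF_t(u_+(t))=\nabla_xF_t(u_-(t))=0$ for every $t\in(0,T]$ (with the convention $u_+(T):=u(T)$). This yields at once properties (i) and (ii); note that {\bf (F6)} and {\bf (F3')} play no role in this step and that the hypothesis on $\epsilon\dot u_\epsilon(0)$ used in Theorem~\ref{compact} is implied by the stronger $\epsilon\dot u_\epsilon(0)\to0$ assumed here.

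For (iii) I would apply Theorem~\ref{genbalance}, whose hypotheses — namely {\bf (F0)-(F6)} with {\bf (F3')} in place of {\bf (F3)}, and $\epsilon\dot u_\epsilon(0)\to0$ — are exactly those assumed in the present statement: there exists a positive atomic measure $\mu$ with ${\rm supp}(\mu)=J$ such that $F_t(u_+(t))+\mu([s,t])=F_s(u_-(s))+\int_s^t\partial_rF_r(u(r))\,\mathrm{d}r$ for all $0\le s\le t\le T$. Because $\mu$ is atomic and $J$ is at most countable, one has $\mu([s,t])=\sum_{r\in J\cap[s,t]}\mu(\{r\})$. It then remains to identify each atom $\mu(\{r\})$ with a cost: for $r\in J$, \eqref{mu} gives $\mu(\{r\})=F_r(u_-(r))-F_r(u_+(r))$, and Remark~\ref{equality} — which combines Theorem~\ref{cost}(3) with Proposition~\ref{proposizione1} — identifies this gap with $c_r(u_+(r),u_-(r))$, which in turn equals $c_r(u_-(r),u_+(r))$ by the symmetry of the cost, Theorem~\ref{cost}(1). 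Since moreover $c_r(u_-(r),u_+(r))=0$ whenever $r\notin J$ (then $u_-(r)=u_+(r)=u(r)$, so Theorem~\ref{cost}(2) applies), the jump series $\sum_{r\in J\cap[s,t]}c_r(u_-(r),u_+(r))$ coincides with the vacuously extended sum over $[s,t]$; substituting into the balance from Theorem~\ref{genbalance} produces exactly \eqref{ebalance}.

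I do not expect a genuine obstacle here: the content is entirely derivative. The only points deserving attention are bookkeeping ones — checking that the cost appearing in \eqref{ebalance} is the weighted functional \eqref{costfun} (with the $A$, $B$, $B^{-1}$ weighting) used consistently throughout; that the atomicity of $\mu$, established inside the proof of Theorem~\ref{genbalance} through the identity $\mu=\mu^J$, is precisely what licenses the passage from $\mu([s,t])$ to the jump series; and that the conventions $u_+(T)=u(T)$ and $u_-(0)=u(0)$ are carried along so that the endpoint cases $s=0$ and $t=T$ are covered (for $t=0$ one uses that Proposition~\ref{proposizione1}, hence Remark~\ref{equality}, holds also at $t=0$ precisely under $\epsilon\dot u_\epsilon(0)\to0$).
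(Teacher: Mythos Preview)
Your proposal is correct and matches the paper's approach: the paper presents Theorem~\ref{main1} explicitly as a summary of the preceding results and gives no separate proof, so the intended argument is precisely the assembly of Theorem~\ref{compact} for (i)--(ii), Theorem~\ref{genbalance} for the energy balance with the atomic measure $\mu$, and Remark~\ref{equality} (via Theorem~\ref{cost}(1),(3) and Proposition~\ref{proposizione1}) to identify $\mu(\{r\})=c_r(u_-(r),u_+(r))$ on $J$. Your bookkeeping remarks on atomicity, symmetry of the cost, and the endpoint conventions are the only points to check, and you have handled them correctly.
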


\subsection{Behavior at jump points}

As a final result, we want to characterize the behavior of the limit evolution at the jump times, by showing, with Theorem~\ref{variat}, that the left and right limits $u_-(t)$ and $u_+(t)$, respectively, are connected by a finite number of heteroclinic solutions to the unscaled autonomous equation
\begin{equation*}
A\ddot{v}(s)+B\dot{v}(s)+\nabla_xF_t(v(s))=0.
\end{equation*}
We  will mainly focus on the case where both $u_-(t)$ and $u_+(t)$ are stationary points of $F(t, \cdot)$. The only exception could happen for $t=0$, where it might be $u_-(0)\notin \mathcal{C}(0)$: this case only requires minor adaptions, and, differently than in the proof of Proposition \ref{proposizione1}, we will not give details, but only provide the slightly  different statement in Remark \ref{fine}.

We start with a simple lemma dealing with the asymptotic behavior of functions $v$ such that $\dot v(s)$ and $\nabla_x F(t,v(s))\in L^2(\R)$. Namely, we prove that both their limits at infinity $v(-\infty)$ and $v(+\infty)$ exist and belong to the set of critical points of $F(t,\cdot)$.

\begin{lem}\label{Lemma1}
Assume {\bf (F5)}. Let $t\in[0,T]$ be fixed and $v\in W^{1,2}(\R;X)\cap L^\infty(\R;X)$ be such that
\begin{equation}
\int_{-\infty}^{+\infty}\|\nabla_x F_t(v(s))\|^2\,\mathrm{d}s<+\infty.
\label{condition}
\end{equation}
Then the following limits exist finite:
\begin{align}
\lim_{s\to-\infty}v(s)=v_*\,,\label{limits1}\\
\lim_{s\to+\infty}v(s)=v^*\,,\label{limits2}
\end{align}
and $v_*, v^*\in\mathcal{C}(t)$. If, in addition, $v\in W^{2,2}(\R;X)$ then it also holds
\begin{equation}
\lim_{s\to\pm\infty}\dot{v}(s)=0.
\label{limitsder}
\end{equation}
\end{lem}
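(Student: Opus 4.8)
The plan is to treat the two ends symmetrically and reduce everything to the behaviour at $+\infty$. Set $M:=\|v\|_{L^\infty(\mathbb R;X)}$ and work with the continuous representative of $v$. The first observation is that $\liminf_{s\to+\infty}\|\nabla_xF_t(v(s))\|=0$: otherwise $\|\nabla_xF_t(v(s))\|\ge c>0$ for all $s$ large, contradicting \eqref{condition}. Hence there is $s_n\to+\infty$ with $\nabla_xF_t(v(s_n))\to0$, and since $\|v(s_n)\|\le M$ we may pass to a subsequence with $v(s_n)\to w$, $\|w\|\le M$; by continuity of $\nabla_xF_t$ this gives $\nabla_xF_t(w)=0$, i.e. $w\in\mathcal C(t)$. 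It then remains to upgrade this to the full limit $v(s)\to w$ as $s\to+\infty$; the argument at $-\infty$ is word for word the same and yields $v_*\in\mathcal C(t)$.

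For the upgrade I would use {\bf (F5)} as follows. Since $\mathcal C(t)$ is closed and discrete, $\mathcal C(t)\cap\overline{B_{M+1}}$ is finite, hence there is $\rho\in(0,1)$ with $\overline{B_\rho(w)}\cap\mathcal C(t)=\{w\}$. Fix an arbitrary $\bar\eta\in(0,\rho]$ and suppose, for contradiction, that $v$ is not eventually contained in $B_{\bar\eta}(w)$. Together with $\liminf_{s\to+\infty}\|v(s)-w\|=0$ (from the sequence $s_n$), continuity of $v$ produces infinitely many pairwise disjoint intervals $[\alpha_j,\beta_j]$, with $\beta_j\to+\infty$, along which $\bar\eta/2\le\|v(s)-w\|\le\bar\eta$, while $\|v(\alpha_j)-w\|=\bar\eta/2$ and $\|v(\beta_j)-w\|=\bar\eta$. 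On each such interval $v(s)$ lies in the annulus $\{\bar\eta/2\le\|x-w\|\le\bar\eta\}\subset\overline{B_\rho(w)}\setminus\{w\}$, a compact set free of critical points, so $\|\nabla_xF_t(v(s))\|\ge m_{\bar\eta}:=\min_{\bar\eta/2\le\|x-w\|\le\bar\eta}\|\nabla_xF_t(x)\|>0$ there; therefore
\begin{equation*}
m_{\bar\eta}^2\sum_j(\beta_j-\alpha_j)\le\sum_j\int_{\alpha_j}^{\beta_j}\|\nabla_xF_t(v(s))\|^2\,\mathrm{d}s\le\int_{-\infty}^{+\infty}\|\nabla_xF_t(v(s))\|^2\,\mathrm{d}s<+\infty,
\end{equation*}
and in particular $\beta_j-\alpha_j\to0$. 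On the other hand, by $v\in W^{1,2}(\mathbb R;X)$,
\begin{equation*}
\tfrac{\bar\eta}{2}=\bigl|\,\|v(\beta_j)-w\|-\|v(\alpha_j)-w\|\,\bigr|\le\|v(\beta_j)-v(\alpha_j)\|\le(\beta_j-\alpha_j)^{1/2}\,\|\dot v\|_{L^2(\mathbb R;X)}\longrightarrow0,
\end{equation*}
a contradiction. Hence $v(s)\in B_{\bar\eta}(w)$ for $s$ large, and since $\bar\eta\in(0,\rho]$ was arbitrary, $v(s)\to w=:v^*\in\mathcal C(t)$. This proves \eqref{limits1}, \eqref{limits2} and the membership $v_*,v^*\in\mathcal C(t)$.

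Finally, assume in addition $v\in W^{2,2}(\mathbb R;X)$, so $\ddot v\in L^2(\mathbb R;X)$. Then $s\mapsto\|\dot v(s)\|^2$ is absolutely continuous with derivative $2\langle\ddot v(s),\dot v(s)\rangle$, which lies in $L^1(\mathbb R)$ by the Cauchy--Schwarz inequality; hence $\|\dot v(s)\|^2=\|\dot v(0)\|^2+2\int_0^s\langle\ddot v,\dot v\rangle\,\mathrm{d}r$ admits finite limits as $s\to\pm\infty$, and each such limit must vanish, since $\lim_{s\to+\infty}\|\dot v(s)\|^2=\ell>0$ would force $\|\dot v(s)\|^2\ge\ell/2$ for $s$ large, contradicting $\dot v\in L^2(\mathbb R;X)$. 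This gives \eqref{limitsder}. The only delicate point is the passage from a subsequential limit to the full limit: oscillation of $v$ across a fixed annulus around $w$ would make $v$ repeatedly traverse a region where $\|\nabla_xF_t\|$ is bounded below, which eats a fixed amount of the finite budget $\int\|\nabla_xF_t(v)\|^2$ per traversal (hence shrinks the traversal times) while keeping the endpoints a fixed distance apart, in conflict with the $L^2$-control on $\dot v$; assumption {\bf (F5)} is precisely what makes that annulus critical-point-free and supplies the lower bound $m_{\bar\eta}$.
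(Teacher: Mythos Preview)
Your proof is correct, but the core argument for the existence of the limits differs from the paper's. The paper argues by contradiction that the $\omega$-limit set contains two points $w_1\neq w_2$ and, invoking the same mechanism as in Proposition~\ref{propdelta}, shows that each traversal between neighborhoods of $w_1$ and $w_2$ contributes a fixed amount $\delta$ to the combined dissipation $\tfrac12\int(\|\nabla_xF_t(v)\|^2+\|\dot v\|^2)$ via the product bound $\delta<\int\|\nabla_xF_t(v)\|\,\|\dot v\|$ and Cauchy--Schwarz; summing over infinitely many disjoint traversals contradicts finiteness. You instead first pin down a single accumulation point $w\in\mathcal C(t)$ directly from \eqref{condition}, and then decouple the two $L^2$ budgets: finiteness of $\int\|\nabla_xF_t(v)\|^2$ forces the annulus-crossing times $\beta_j-\alpha_j$ to vanish, while $\dot v\in L^2$ forbids a fixed spatial gap being bridged in vanishing time. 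Both arguments hinge on {\bf (F5)} to secure a positive lower bound for $\|\nabla_xF_t\|$ on a compact set free of critical points; the paper's route is closer in spirit to the dissipation estimates used elsewhere in the article, while yours is a bit more elementary and makes the separate roles of the two integrability assumptions transparent. For \eqref{limitsder} your argument (the derivative of $\|\dot v\|^2$ lies in $L^1$, hence the limit exists and must be zero) is essentially the same as the paper's, only packaged slightly more cleanly.
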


\proof
We only prove \eqref{limits1}, since the proof of \eqref{limits2} is similar. Define the limit class of $v$ as
\begin{equation*}
\omega:=\left\{w\in\R\cup\{\pm\infty\}:\, \exists\, s_k\to-\infty\,\text{ such that }\, \lim_{k\to+\infty}v(s_k)=w\right\}.
\end{equation*}
Clearly, $\omega$ is nonempty and $\omega\subseteq\R$ since $v$ is bounded, say $\|v\|_{L^\infty(\R)}\leq M$ for some $M>0$. In order to prove \eqref{limits1}, we have to show that $\omega=\{v_*\}$ for some $v_*\in\R$. We then argue by contradiction and assume that $\omega$ contains at least two  points $w_1$ and $w_2$, with $w_1\neq w_2$. Let $s_k,t_k\to-\infty$ be such that $v(s_k)\to w_1$ and $v(t_k)\to w_2$. Up to a further extraction, we may assume that $s_k\leq t_k$, $t_{k}\leq s_{k+1}$ for every $k\in\N$ and $s_k\leq t_k<-\eta$ for every $k$ large enough. Now, since $\mathcal{C}_M(t):=\mathcal{C}(t)\cap B_M$ is finite, the same argument as in the proof of Proposition~\ref{propdelta} provides the existence of $\delta=\delta(t,w_1,w_2)>0$ and $k_0\in\N$ such that
\begin{equation*}
\delta<\int_{s_k}^{t_k}\|\nabla_xF_t(v(s))\|\|\dot{v}(s)\|\mathrm{d}s
\end{equation*}
for every $k\geq k_0$. Moreover, by applying the Cauchy-Schwarz inequality, we get
\begin{equation*}
\delta<\frac{1}{2}\int_{s_k}^{t_k}\|\nabla_xF_t(v(s))\|^2+\|\dot{v}(s)\|^2\mathrm{d}s,
\end{equation*}
whence, summing up $k$, we deduce that
\begin{equation*}
\sum_{k=k_0}^{+\infty}\delta<\frac{1}{2}\sum_{k=k_0}^{+\infty}\int_{s_k}^{t_k}\|\nabla_xF_t(v(s))\|^2+\|\dot{v}(s)\|^2\mathrm{d}s<\frac{1}{2}\int_{-\infty}^{-\eta}\|\nabla_xF_t(v(s))\|^2+\|\dot{v}(s)\|^2\mathrm{d}s,
\end{equation*}
that contradicts \eqref{condition} and the fact that $\dot{v}\in L^2(\R)$. Thus, there exists $v_*\in\R$ such that $\omega=\{v_*\}$ and \eqref{limits1} holds. Moreover, it must be $v_*\in\mathcal{C}(t)$ in order to have
\begin{equation*}
\int_{-\infty}^{\eta}\|\nabla_x F_t(v(s))\|^2\,\mathrm{d}s<+\infty,
\end{equation*}
for every fixed $\eta>0$.

Assume now, in addition, that $\ddot{v}\in L^2(\R)$. We can choose a sequence $t_k\to-\infty$ such that $\dot{v}(t_k)\to0$ as $k\to+\infty$. By a simple computation and the Cauchy-Schwarz inequality we have
\begin{equation}
\begin{split}
\frac{1}{2}\|\dot{v}(t)\|^2&=\frac{1}{2}\|\dot{v}(t_k)\|^2+\int_{t_k}^t\langle\ddot{v}(s),\dot{v}(s)\rangle\,\mathrm{d}s\\
  & \leq \frac{1}{2}\|\dot{v}(t_k)\|^2+\frac{1}{2}\int_{-\infty}^t\|\ddot{v}(s)\|^2+\|\dot{v}(s)\|^2\,\mathrm{d}s.
\end{split}
\label{previous}
\end{equation} 
For every fixed $\epsilon$, there exists $\bar{t}<0$ such that, for every $t\leq\bar{t}$, the integral at the right hand side of \eqref{previous} is smaller than $\epsilon$, thus obtaining
\begin{equation*}
\frac{1}{2}\|\dot{v}(t)\|^2\leq \frac{1}{2}\|\dot{v}(t_k)\|^2+\epsilon,\quad \forall\, t\leq\bar{t},
\end{equation*}
whence, letting $t_k\to-\infty$, we finally deduce $\displaystyle\lim_{t\to-\infty}\dot{v}(t)=0$, as desired.
\endproof

We will also make use of the following technical Lemma.

\begin{lem}\label{Lemma2}
Let $(f_k)_{k\in\N}$ be a sequence of functions such that $f_k\to f$ as $k\to+\infty$ uniformly on the compact subsets of $\R$, and assume that
\begin{equation}
\lim_{x\to+\infty} f(x)=f^*.
\label{condilim}
\end{equation}  
Then there exist $x_j\to+\infty$ and $(k_j)_{j\in\N}$ such that
\begin{equation}
\|f_{k_j}(x_j+\tau)-f^*\|\to0
\label{thes}
\end{equation}
as $j\to+\infty$, for every $\tau\in\R$.
\end{lem}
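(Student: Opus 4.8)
The plan is to construct the sequences $(x_j)$ and $(k_j)$ by a diagonal extraction, tuning three quantities in the right order: a window radius $j$ for the interval $[x_j-j,\,x_j+j]$, a location $x_j$ large enough that $f$ is already $1/j$-close to $f^*$ on the whole window, and an index $k_j$ large enough that $f_{k_j}$ is $1/j$-close to $f$ on that window.

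First I would use \eqref{condilim}: for each $j\in\mathbb{N}$ there is $\rho_j>0$ with $\|f(x)-f^*\|\le 1/j$ for all $x\ge\rho_j$. Set $R_j:=\max\{\rho_1+1,\dots,\rho_j+j,\,j\}$, so that $(R_j)$ is nondecreasing, $R_j\to+\infty$, and $R_j-j\ge\rho_j$; hence $\|f(x)-f^*\|\le 1/j$ for every $x\in I_j:=[R_j-j,\,R_j+j]$.

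Next, since $f_k\to f$ uniformly on the compact interval $I_j$, there is $K_j\in\mathbb{N}$ such that $\sup_{x\in I_j}\|f_k(x)-f(x)\|\le 1/j$ for all $k\ge K_j$. I would then put $k_j:=\max\{K_j,\,k_{j-1}+1\}$ (with the convention $k_0:=0$), so that $(k_j)$ is strictly increasing, and set $x_j:=R_j$, which diverges to $+\infty$. For every $\tau$ with $|\tau|\le j$ one has $x_j+\tau\in I_j$, and therefore
\[
\|f_{k_j}(x_j+\tau)-f^*\|\le \|f_{k_j}(x_j+\tau)-f(x_j+\tau)\|+\|f(x_j+\tau)-f^*\|\le\frac{2}{j}.
\]

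Finally, to obtain \eqref{thes} I would fix $\tau\in\mathbb{R}$: for all $j\ge|\tau|$ the displayed estimate applies and gives $\|f_{k_j}(x_j+\tau)-f^*\|\le 2/j\to 0$ as $j\to+\infty$, as desired. There is no genuine obstacle in this argument; the only point requiring care is the order in which the parameters are chosen, namely that the window radius $j$ must be fixed before picking $x_j$ (so that $f$ is uniformly within $1/j$ of $f^*$ on the whole window) and $x_j$ before picking $k_j$ (so that the compact set on which uniform convergence is invoked is already determined).
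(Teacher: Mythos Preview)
Your proof is correct and follows essentially the same approach as the paper's: both arguments choose, for each $j$, a growing window of length of order $j$ placed far enough to the right that $f$ is within a prescribed tolerance of $f^*$ on the whole window, and then an index $k_j$ large enough that $f_{k_j}$ is uniformly close to $f$ on that compact window. The only differences are cosmetic---the paper uses tolerance $2^{-j}$ and the one-sided window $[\bar{x}_j,\bar{x}_j+j]$ with $x_j=\bar{x}_j+j/2$, whereas you use tolerance $1/j$ and the symmetric window $[R_j-j,R_j+j]$---and your explicit definition of $R_j$ makes the divergence $x_j\to+\infty$ slightly more transparent.
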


\proof

We argue by induction on $j\geq1$. By assumption \eqref{condilim}, we can fix $\bar{x}_j>\bar{x}_{j-1}$ such that
\begin{equation*}
\|f(x)-f^*\|\leq\frac{1}{2^j},\quad \text{ for every }x\geq \bar{x}_j,
\end{equation*}  
and, with the local uniform convergence $f_k\to f$, we can choose $k_j>k_{j-1}$ such that
\begin{equation*}
\|f_{k_j}(x)-f(x)\|\leq\frac{1}{2^j},\quad \text{ for every }x\in [\bar{x}_j,\bar{x}_j+j].
\end{equation*}
Setting $x_j:=\bar{x}_j+\frac{j}{2}$, for any fixed $\tau\in\R$ we have $x_j+\tau\in[\bar{x}_j,\bar{x}_j+j]$ for every $j\geq 2|\tau|$, and then
\begin{equation*}
\|f_{k_j}(x_j+\tau)-f^*\|\leq \sup_{x\geq \bar{x}_j}\|f(x)-f^*\|+\sup_{x\in [\bar{x}_j,\bar{x}_j+j]}\|f_{k_j}(x)-f(x)\|\leq \frac{1}{2^{j-1}}
\end{equation*}
from which, passing to the limit as $j\to+\infty$, we get \eqref{thes}.

\endproof

Now, we can state and prove the announced result. Observe that condition \eqref{bal} in the statement  is indeed satisfied for $u=u_-(t)$ and $v=u_+(t)$ for any $t \in J$ , as pointed out in Remark \ref{equality}. For $t=0$, the assumption $u_-(0)$ to be a critical point of $F(0,\cdot)$ might be not verified: this case requires a minor modification and will be shortly discussed in Remark \ref{fine}.

\begin{thm}\label{variat}
Assume {\bf(F0)-(F5)}. Let $t\in[0,T]$ be fixed and assume that there exist $u,v\in\mathcal{C}(t)$ such that
\begin{equation}
F_t(u)-F_t(v)=c_t(u,v),
\label{bal}
\end{equation}
where $c_t$ is the cost function defined by \eqref{costfun}. Then, there exist a subset of distinct points $\{u^0,u^1,\dots,u^m\}\subseteq\mathcal{C}(t)$ with $u^0=u$, $u^m=v$ and a family of functions $(v^i)_{i=1,\dots,m}$ such that, for every $i=1,\dots,m$,
\begin{equation}
\begin{cases}
&A\ddot{v}^i(s)+B\dot{v}^i(s)+\nabla_xF_t(v^i(s))=0,\quad\forall\,s\in\R;\\
&\displaystyle\lim_{s\to-\infty}v^i(s)=u^{i-1}, \displaystyle\lim_{s\to+\infty}v^i(s)=u^{i};\\
&\displaystyle\lim_{s\to\pm\infty}\dot{v}^i(s)=0.
\end{cases}
\label{properties}
\end{equation}
\end{thm}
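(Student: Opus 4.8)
We may assume $u\neq v$, the case $u=v$ being trivial (take $m=0$). Since $u,v\in\mathcal C(t)$, Theorem~\ref{cost}(5) identifies $c_t(u,v)$ with the infimum of the cost functional $\tfrac12\int_{\R}\|\nabla_xF_t(v)+A\ddot v\|_{B^{-1}}^2+\|\dot v\|_B^2\,\mathrm{d}s$ over the class $V^t_{u,v}$ of curves on $\R$ joining $u$ to $v$; let $(w_k)_k\subset V^t_{u,v}$ be an infimizing sequence for it. The first observation is that such a sequence asymptotically solves the unscaled equation \eqref{unscaled}. Applying \eqref{simpleid} with $Q=B$, $z_1=\nabla_xF_t(w_k)+A\ddot w_k$, $z_2=\dot w_k$ and integrating over $\R$, using that $\dot w_k(\pm\infty)=0$ (so that $\int_\R\langle z_1,z_2\rangle\,\mathrm{d}s=F_t(v)-F_t(u)=-c_t(u,v)$), one obtains
\[
\tfrac12\int_\R\|\nabla_xF_t(w_k)+A\ddot w_k\|_{B^{-1}}^2+\|\dot w_k\|_B^2\,\mathrm{d}s=c_t(u,v)+\tfrac12\int_\R\|A\ddot w_k+B\dot w_k+\nabla_xF_t(w_k)\|_{B^{-1}}^2\,\mathrm{d}s;
\]
since the left-hand side converges to $c_t(u,v)$, the residual $R_k:=A\ddot w_k+B\dot w_k+\nabla_xF_t(w_k)$ tends to $0$ in $L^2(\R;X)$. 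Moreover, repeating on $\R$ the estimates in the proof of Theorem~\ref{cost}(2) (namely \eqref{(5.22)}, assumption {\bf(F1)}, and one integration by parts, the boundary terms vanishing because $\dot w_k(\pm\infty)=0$), we get, uniformly in $k$, the bounds $\|w_k\|_{L^\infty}+\|\dot w_k\|_{L^\infty}\leq M$ and $\|\ddot w_k\|_{L^2}+\|\dot w_k\|_{L^2}+\|\nabla_xF_t(w_k)\|_{L^2}\leq M$.

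Next we decompose each $w_k$ into long sojourns near critical points and a uniformly bounded number of short transitions between them. Let $\mathcal C_M(t):=\mathcal C(t)\cap B_M$, finite by {\bf(F1)} and {\bf(F5)}, let $d$ be as in \eqref{dt}, and fix $\delta\in(0,d/2)$ such that, for every $k$, the set $\{s:\dist(w_k(s),\mathcal C_M(t))=\delta\}$ is finite (which holds for a.e.\ such $\delta$). On the closed bounded set $Q_k:=\{s:\dist(w_k(s),\mathcal C_M(t))\geq\delta\}$ one has $\|\nabla_xF_t(w_k)\|\geq m_\delta>0$, hence $|Q_k|\leq M/m_\delta^2$ uniformly in $k$; since any passage from $\bar B_\delta(z)$ to $\bar B_\delta(z')$ with $z\neq z'$ requires a displacement $\geq d-2\delta$ at speed $\leq M$, the number of transitions of $w_k$ between distinct points of $\mathcal C_M(t)$ is bounded uniformly in $k$. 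Up to a subsequence, the combinatorial pattern of this decomposition stabilises and each sojourn duration either diverges or stays bounded; merging the transitions glued together by bounded sojourns, we obtain a $k$-independent itinerary $u^0=u,u^1,\dots,u^m=v$ of points of $\mathcal C_M(t)$ and, for each $i$ and each $k$, a time $\sigma^i_k$ — the right endpoint of the long ball-visit about $u^{i-1}$ — around which $w_k$ runs the $i$-th transition within a time window of uniformly bounded length, immediately followed by the long ball-visit about $u^i$.

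Set $v^i_k(\tau):=w_k(\tau+\sigma^i_k)$. By the uniform bounds, Arzel\`a--Ascoli and weak $L^2_{\mathrm{loc}}$-compactness of $(\ddot v^i_k)_k$ give, along a subsequence, $v^i_k\to v^i$ in $C^1_{\mathrm{loc}}(\R)$; passing to the limit in $A\ddot v^i_k+B\dot v^i_k+\nabla_xF_t(v^i_k)=R_k(\cdot+\sigma^i_k)$ and using $R_k\to0$ in $L^2$, the function $v^i$ solves \eqref{unscaled} on $\R$. The uniform $L^2$-bounds pass to the limit by lower semicontinuity, so $\dot v^i,\ddot v^i\in L^2(\R)$, $\int_\R\|\nabla_xF_t(v^i)\|^2\,\mathrm{d}s<+\infty$ and $v^i\in L^\infty(\R)$; hence Lemma~\ref{Lemma1} yields $v^i(\pm\infty)\in\mathcal C(t)$ and $\dot v^i(\pm\infty)=0$. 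Since the sojourn flanking $\sigma^i_k$ on the left (resp.\ right) has unbounded duration, one has $v^i(\tau)\in\bar B_\delta(u^{i-1})$ for every $\tau\leq0$ (resp.\ $v^i(\tau)\in\bar B_\delta(u^i)$ for every $\tau$ beyond a fixed constant), and, as $\bar B_\delta(z)$ contains no critical point other than $z$, it follows that $v^i(-\infty)=u^{i-1}$ and $v^i(+\infty)=u^i$. Moreover $v^i$ is nonconstant: were it constant it would equal the critical point $v^i(-\infty)=u^{i-1}$, contradicting $\|v^i(0)-u^{i-1}\|=\delta$. Finally, testing \eqref{unscaled} (written for $v^i$) with $\dot v^i$ and integrating over $\R$ gives
\[
\int_\R\|\dot v^i(s)\|_B^2\,\mathrm{d}s=F_t(u^{i-1})-F_t(u^i)>0,
\]
so $F_t(u^0)>F_t(u^1)>\dots>F_t(u^m)$ and the $u^i$ are pairwise distinct; thus $\{u^0,\dots,u^m\}$ together with $(v^i)_{i=1}^m$ satisfy \eqref{properties}, which completes the proof.

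The step I expect to be the main obstacle is the trajectory decomposition of the second paragraph: controlling the possible back-and-forth of the infimizing curves near a single critical point, and the ensuing ``no-return'' estimates needed to pin down \emph{both} endpoints of each limiting heteroclinic. This refines the inductive construction in the proof of Proposition~\ref{proposizione1}, where only one endpoint is relevant; the remaining ingredients — the algebraic identity producing $R_k\to0$, the a priori bounds, the limit passage in the equation, and the strict monotonicity of $F_t$ along the heteroclinics — are comparatively routine.
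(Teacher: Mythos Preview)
Your approach is essentially correct and takes a genuinely different route from the paper. Both proofs begin identically --- an infimizing sequence for \eqref{costfun+}, the identity \eqref{simpleid} combined with \eqref{bal} to force $R_k\to0$ in $L^2$, and the uniform bounds via \eqref{equa1n}--\eqref{equa3} --- but then diverge. The paper builds the chain \emph{recursively}: at each step it pins down only the \emph{left} endpoint of $v^i$ (via a first-exit time from a $d$-ball), lets $u^i:=v^i(+\infty)$ be whatever Lemma~\ref{Lemma1} delivers, and then invokes the diagonal Lemma~\ref{Lemma2} to locate times along the translated sequence at which to restart from $u^i$; termination at $v$ is argued indirectly (the $u^i$ are distinct by strict energy decrease, $\mathcal C_M(t)$ is finite, and the recursion can always continue while $u^{i-1}\ne v$). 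Your global combinatorial decomposition reads off the full itinerary at once --- so $u^m=v$ is automatic and Lemma~\ref{Lemma2} is not needed --- at the price of the trajectory-decomposition step you rightly flag as the crux.

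One small slip there deserves attention. Within a long ``sojourn'' at $u^{i-1}$ the curve $w_k$ may make brief excursions into $Q_k$ without reaching another ball, so the literal claim $v^i(\tau)\in\bar B_\delta(u^{i-1})$ for all $\tau\le0$ can fail; what survives (and suffices) is that $v^i_k(\tau)$ stays out of every $\bar B_\delta(z')$ with $z'\ne u^{i-1}$, which still forces $v^i(-\infty)=u^{i-1}$ since this limit lies in $\mathcal C_M(t)$. Correspondingly, $\sigma^i_k$ should be taken as the \emph{last exit} from $\bar B_\delta(u^{i-1})$ within that sojourn, so that $\|v^i_k(0)-u^{i-1}\|=\delta$ holds and nonconstancy of $v^i$ follows. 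With these adjustments your argument goes through.
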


\proof
The argument is based on a recursive construction  on the number $i\geq1$ of functions $v^i$ complying with \eqref{properties}. We subdivide it into two steps.\\
{\bf Step1.} First, we fix $u_k$ to be an infimizing sequence for the infimum problem \eqref{costfun+}, which equivalently defines $c_t(u,v)$ according to (5) in Theorem \ref{cost}; i.e., a sequence $u_k\in W^{2,2}(\R;X)$ such that $u_k(-\infty)=u$, $u_k(+\infty)=v$ and
\begin{equation}
\displaystyle\lim_{k\to+\infty}\frac{1}{2}\int_{-\infty}^{+\infty}\|\nabla_xF_t(u_k(s))+A\ddot{u}_k(s)\|_{B^{-1}}^2+\|\dot{u}_k(s)\|_B^2\,\mathrm{d}s = c_t(u,v).
\label{minimizing}
\end{equation}
With \eqref{bal} we then have
\begin{equation}
\displaystyle\lim_{k\to+\infty}\frac{1}{2}\int_{-\infty}^{+\infty}\|\nabla_xF_t(u_k(s))+A\ddot{u}_k(s)\|_{B^{-1}}^2+\|\dot{u}_k(s)\|_{B}^2\,\mathrm{d}s = F_t(u)-F_t(v).
\label{6.17}
\end{equation}
Furthermore,  since by Lemma \ref{Lemma1} it results $\dot{u}_k(\pm \infty)=0$ for every fixed $k\in\N$, we get that
\begin{equation*}
\begin{split}
\int_{-\infty}^{+\infty}\langle \nabla_xF_t(u_k(s))+A\ddot{u}_k(s), \dot{u}_k(s)\rangle\,\mathrm{d}s&=F_t(u_k(r))+\frac{1}{2}\|\dot{u}_k(r)\|_A^2\Big|_{-\infty}^{+\infty}\\
&= F_t(u_k(+\infty))-F_t(u_k(-\infty))=F_t(v)-F_t(u),
\end{split}
\end{equation*}
Summing up with \eqref{6.17}, and eventually using \eqref{simpleid}, we deduce
\begin{equation}
\begin{split}
\displaystyle0=\lim_{k\to+\infty}&\int_{-\infty}^{+\infty}\frac{1}{2}\left\{\|\nabla_xF_t(u_k(s))+A\ddot{u}_k(s)\|_{B^{-1}}^2+\|\dot{u}_k(s)\|_{B}^2\right\}\,\mathrm{d}s\\
+&\int_{-\infty}^{+\infty}\langle \nabla_xF_t(u_k(s))+A\ddot{u}_k(s), \dot{u}_k(s)\rangle\,\mathrm{d}s\\
=\displaystyle\lim_{k\to+\infty}&\int_{-\infty}^{+\infty}\frac{1}{2}\|A\ddot{u}_k(s)+B\dot{u}_k(s)+ \nabla_xF_t(u_k(s))\|_{B^{-1}}^2\,\mathrm{d}s\,.
\end{split}
\label{(star)}
\end{equation}

Notice that \eqref{6.17} provides an upper bound on 
\[
\frac{1}{2}\int_{-\infty}^{+\infty}\|\nabla_xF_t(u_k(s))+A\ddot{u}_k(s)\|_{B^{-1}}^2+\|\dot{u}_k(s)\|_{B}^2\,\mathrm{d}s
\]
which is independent of $k$. With this,  \eqref{equa1n}-\eqref{equa3} hold for a constant $M$ not depending on $k$, 
and we deduce that the sequence $(u_k(s))_k$ is equi-bounded in $W^{2,2}(\R;X)\cap L^\infty(\R;X)$.
Thus, in particular, there exists $M=M(t,u,v)>0$ such that
\begin{equation*}
\|u_k(s)\|\leq M,\quad \text{ for every }s\in\R\text{ and }k\in\N.
\end{equation*}
Correspondingly, we define the set $\mathcal{C}_M(t):=\mathcal{C}(t)\cap B_M$ and $d_t$ as in \eqref{CM} and \eqref{dt}, respectively, and we fix $d \le \frac14 d_t$.

Now we set $u^0:=u$ and, for every $k\in\N$,
\begin{equation}
t_k^1:=\min\{s\in\R:\,\|u_k(s)-u^0\|=d\},
\label{defn1}
\end{equation}
where the minimum is well posed since $u_k(-\infty)=u$ and $u_k(+\infty)=v\neq u$. 

Then, we consider the time-translations by $t_k^1$ of $u_k$, namely 
\begin{equation*}
v^1_k(s):=u_k(t^1_k+s),\quad\forall s\in\R.
\end{equation*}
We notice that, by the definition of $t^1_k$, it holds
\begin{equation}
\|v^1_k(s)-u^0\|\leq d\le \frac14 d_t,\quad\mbox{for every } s\leq0,
\label{condA}
\end{equation}
while for every $s$, they comply with the identity
\begin{equation}
F_t(v^1_k(s))+\frac{1}{2}\|\dot{v}_k(s)\|_A^2=F_t(u)+\int_{-\infty}^s\langle\nabla_xF_t(v^1_k(r))+A\ddot{v}^1_k(r),\dot{v}^1_k(r)\rangle\,\mathrm{d}r.
\label{star0}
\end{equation}
Now, by \eqref{(star)}, we obtain
\begin{equation}
\begin{split}
&\displaystyle\mathop{\lim\inf}_{k\to+\infty}\int_{-\infty}^s\langle\nabla_xF_t(v^1_k(r))+A\ddot{v}^1_k(r),\dot{v}^1_k(r)\rangle\,\mathrm{d}r\\
&=\displaystyle\mathop{\lim\inf}_{k\to+\infty}\left(-\frac{1}{2}\int_{-\infty}^{s}\|\nabla_xF_t(v^1_k(r))+A\ddot{v}^1_k(r)\|_{B^{-1}}^2+\|\dot{v}^1_k(r)\|_B^2\,\mathrm{d}r\right).
\end{split}
\label{(star2)}
\end{equation}
Moreover, for every fixed $k$, since $\|v^1_k(0)-u^0\|=d$ and $v^1_k(-\infty)=u^0$, by arguing as in the proof of \eqref{newdelta} we can find $\gamma_1=\gamma_1(t,u^0,d,M,\beta)>0$ such that
\begin{equation*}
\frac{1}{2}\int_{-\infty}^{0}\|\nabla_xF_t(v^1_k(r))+A\ddot{v}^1_k(r)\|_{B^{-1}}^2+\|\dot{v}^1_k(r)\|_B^2\,\mathrm{d}r>\gamma_1,
\end{equation*}
and this obviously implies 
\begin{equation}
\frac{1}{2}\int_{-\infty}^{s}\|\nabla_xF_t(v^1_k(r))+A\ddot{v}^1_k(r)\|_{B^{-1}}^2+\|\dot{v}^1_k(r)\|_B^2\,\mathrm{d}r>\gamma_1,
\label{conddelta}
\end{equation}
for every $s\geq0$.
Thus, combining \eqref{star0}, \eqref{(star2)} and \eqref{conddelta} we deduce
\begin{equation}
\displaystyle\mathop{\lim\inf}_{k\to+\infty}\left(F_t(v^1_k(s))+\frac{1}{2}\|\dot{v}^1_k(s)\|_A^2\right)\leq F_t(u^0)-\gamma_1,\quad\text{ for every }s\geq0.
\label{condB}
\end{equation}

The sequence $v^1_k$ is equi-bounded in $W^{2,2}(\R;X)$, since it is a time-translation of $u_k$,  for which \eqref{equa1n}-\eqref{equa3} hold. By virtue of Ascoli-Arzel\`a Theorem there exists a (not relabeled) subsequence of $v_k^1$ and a function $v^1$ such that $v^1_k\to v^1$, $\dot{v}^1_k\to\dot{v}^1$ uniformly on the compact subsets of $\R$, and $v^1_k\to v^1$ weakly in $W^{2,2}(\R;X)$.
By semicontinuity and from \eqref{(star)}, for every arbitrary closed interval $[a,b]\subset\R$, we obtain
\begin{equation*}
\frac{1}{2}\int_{a}^{b}\|A\ddot{v}^1(r)+B\dot{v}^1(r)+\nabla_xF_t(v^1(r))\|_{B^{-1}}^2\,\mathrm{d}r\leq 0
\end{equation*}
whence we finally get
\begin{equation*}
A\ddot{v}^1(r)+B\dot{v}^1(r)+\nabla_xF_t(v^1(r))=0
\end{equation*}
for every $r \in \mathbb{R}$
Similarly, we also deduce
\begin{equation*}
\int_{-\infty}^{+\infty}\|\nabla_xF_t(v^1(r))\|^2\,\mathrm{d}r\leq M
\end{equation*}
where $M$ is given by \eqref{equa3}.
Therefore, by virtue of Lemma~\ref{Lemma1}, there exist the limits $\displaystyle\lim_{r\to-\infty}v^1(r)=v^1_*\in\mathcal{C}_M(t)$, $\displaystyle\lim_{r\to+\infty}v^1(r)=v^{1,*}\in\mathcal{C}_M(t)$ and $\displaystyle\lim_{r\to\pm\infty}\dot{v}^1(r)=0$. Thus, from \eqref{condA}, it must be $v^1_*=u^0$. From \eqref{condB} it follows that
\begin{equation}
\left(F_t(v^1(r))+\frac{1}{2}\|\dot{v}^1(r)\|_A^2\right)\leq F_t(u^0)-\gamma_1,\quad\text{ for every }r\geq0,
\label{condBlim}
\end{equation}
whence, in particular, 
\begin{equation*}
F_t(v^{1,*})<F_t(u^0)
\end{equation*}
and then $v^{1,*}\neq u^0$. Now, define $u^1:=v^{1,*}\in\mathcal{C}_M(t)$. If $u^1=v$, then $m=1$ and construction stops here, otherwise the proof goes on as follows.\\
{\bf Step 2.} Let $i\geq2$. Assume that the sequences $(v^{l}_k(r))_k$ have been constructed for every $1\leq l\leq i-1$ and the corresponding limits $v^l$ comply with \eqref{properties}, for some $u^{l-1}, u^{l}\in\mathcal{C}_M(t)$.

In order to define $v^i$, we apply Lemma~\ref{Lemma2} to the sequence $(v^{i-1}_k(r),\dot{v}^{i-1}_k(r))$. Therefore, there exist a sequence $t_j\to+\infty$ and a subsequence $k_j\to+\infty$ such that 
\begin{equation}
(v^{i-1}_{k_j}(t_j+s),\dot{v}^{i-1}_{k_j}(t_j+s))\to(u^{i-1},0), \text{ as } j\to+\infty, \text{ for every } s\in\R. 
\label{condC}
\end{equation}
Now we define
\begin{equation*}
t_j^i:=\min\{s\geq t_j:\, \|v^{i-1}_{k_j}(s)-u^{i-1}\|=d\},
\end{equation*}
and we note that this definition is well posed, since $v^{i-1}_{k_j}$ are translations in time of $u_{k_j}$ and $u_{k_j}(+\infty)=v\neq u^{i-1}$. 
Moreover, $t_j^i-t_j\to+\infty$ by \eqref{condC}. Setting
\begin{equation*}
v^i_j(s):=v^{i-1}_{k_j}(t^i_j+s),\quad s\in\R,
\end{equation*}
for every $s\leq0$, there exists $j=j(s)$ such that
\begin{equation}
\|v^i_j(\tau)-u^{i-1}\|\leq d,\quad \forall \tau\in[s,0],\, \forall j\geq j(s)
\label{boundbis}
\end{equation}
(namely, it will suffice to choose $t_j^i-t_j\geq|s|$). 
Being  translations in time of $u_{k_j}$, with \eqref{(star)} we can show that $v^i_j$ comply with
\begin{equation}
\lim_{j\to+\infty}\int_{-\infty}^{+\infty}\frac{1}{2}\|A\ddot{v}^i_j(s)+B\dot{v}^i_j(s)+\nabla_xF_t(v^i_j(s))\|_{B^{-1}}^2\,\mathrm{d}s=0
\label{condF}
\end{equation}
and that  $v^i_j$ is an equi-bounded sequence in $W^{2,2}(\R;X)\cap L^\infty(\R;X)$, with the bounds independent of $i$ and $j$, provided by \eqref{equa1n}-\eqref{equa3} along the sequence $u_k$. By the Ascoli-Arzel\`a Theorem, there exists a function $v^i$ such that, up to a (not relabeled) subsequence, $v^i_j\to v^i$, $\dot{v}^i_j\to\dot{v}^i$ uniformly on the compact subsets of $\R$, and $v^i_j\to v^i$ weakly in $W^{2,2}(\R;X)$.
By semicontinuity, from \eqref{condF} we obtain, for every arbitrary closed interval $[a,b]\subset\R$,
\begin{equation*}
\frac{1}{2}\int_{a}^{b}\|A\ddot{v}^i(r)+B\dot{v}^i(r)+\nabla_xF_t(v^i(r))\|_{B^{-1}}^2\,\mathrm{d}r\leq0
\end{equation*}
whence we finally get
\begin{equation*}
A\ddot{v}^i(r)+B\dot{v}^i(r)+\nabla_xF_t(v^i(r))=0
\end{equation*} 
for every $r \in \mathbb{R}$.
As in Step 1, 
we have $v^i\in W^{2,2}(\R;X)\cap L^\infty(\R;X)$ and
\begin{equation*}
\int_{-\infty}^{+\infty}\|\nabla_xF_t(v^i(r))\|^2\,\mathrm{d}r\leq M.
\end{equation*}
Therefore, again by virtue of Lemma~\ref{Lemma1}, we get the existence of the limits $\displaystyle\lim_{r\to-\infty}v^i(r)=v^i_*\in\mathcal{C}_M(t)$, $\displaystyle\lim_{r\to+\infty}v^i(r)=v^{i,*}\in\mathcal{C}_M(t)$ and $\displaystyle\lim_{r\to\pm\infty}\dot{v}^i(r)=0$. Passing to the limit as $j\to+\infty$ in \eqref{boundbis} we deduce that
\begin{equation}
\|v^i(\tau)-u^{i-1}\|\leq d\le \frac14d_t
\label{boundtris}
\end{equation}
for all $\tau \le 0$. Since $v^i_*\in\mathcal{C}_M(t)$, by our choice of $d_t$ we conclude that it must be $v^i_*=u^{i-1}$. 

Now we show that $v^{i,*}\neq u^l$, for every $l\leq i-1$. In order to do that, we first note that for every $s\geq0$ it holds
\begin{equation}
\begin{split}
F_t(v^i(s))+\frac{1}{2}\|\dot{v}^i(s)\|_A^2&\leq \displaystyle\mathop{\lim\inf}_{j\to+\infty}\Bigl(F_t(v^{i-1}_j(t_j))+\frac{1}{2}\|\dot{v}^{i-1}_j(t_j)\|_A^2\\
&+\int_{-(t^i_j-t_j)}^s\langle \nabla_xF_t(v^i_j(r))+A\ddot{v}^i_j(r),\dot{v}^i_j(r)\rangle\,\mathrm{d}r\Bigr).
\end{split}
\label{condizz}
\end{equation}
By \eqref{condC}, for $s=0$, we have that $F_t(v^{i-1}_j(t_j))\to F_t(u^{i-1})$ and $\|\dot{v}^{i-1}_j(t_j)\|_A^2\to0$. Furthermore, with \eqref{condF} we also get
\begin{equation}\label{above}
\begin{split}
&\displaystyle\mathop{\lim\inf}_{j\to+\infty}\frac{1}{2}\int_{-(t^i_j-t_j)}^s\|A\ddot{v}^i_j(r)+B\dot{v}^i_j(r)+\nabla_xF_t(v^i_j(r))\|_{B^{-1}}^2\,\mathrm{d}r\\
&\leq \displaystyle\mathop{\lim}_{j\to+\infty}\frac{1}{2}\int_{-\infty}^{+\infty}\|A\ddot{v}^i_j(r)+B\dot{v}^i_j(r)+\nabla_xF_t(v^i_j(r))\|_{B^{-1}}^2\,\mathrm{d}r=0.
\end{split}
\end{equation}
Observe that, by \eqref{simpleid} with $Q=B$ we have for  every $r$
\[
\begin{split}
&\displaystyle2\langle \nabla_xF_t(v^i_j(r))+A\ddot{v}^i_j(r),\dot{v}^i_j(r)\rangle=\\
\displaystyle\|A\ddot{v}^i_j(r)+B\dot{v}^i_j(r)+\nabla_xF_t(v^i_j(r))\|_{B^{-1}}^2&-\Big\{\|\nabla_xF_t(v^i_j(r))+A\ddot{v}^i_j(r)\|_{B^{-1}}^2+\|\dot{v}^i_j(r)\|_B^2\Big\}\,.
\end{split}
\]
Combining this with \eqref{above}, it holds for all $s \ge 0$
\begin{equation}\label{fin?}
\begin{split}
&\displaystyle\mathop{\lim\inf}_{j\to+\infty}\int_{-(t^i_j-t_j)}^s\langle \nabla_xF_t(v^i_j(r))+A\ddot{v}^i_j(r),\dot{v}^i_j(r)\rangle\,\mathrm{d}r=\\
&\leq \displaystyle\mathop{\lim\inf}_{j\to+\infty}\Big(-\frac{1}{2}\int_{-(t^i_j-t_j)}^s\|\nabla_xF_t(v^i_j(r))+A\ddot{v}^i_j(r)\|_{B^{-1}}^2+\|\dot{v}^i_j(r)\|_B^2\,\mathrm{d}r\Big)\\
&\leq \displaystyle\mathop{\lim\inf}_{j\to+\infty}\Big(-\frac{1}{2}\int_{-(t^i_j-t_j)}^0\|\nabla_xF_t(v^i_j(r))+A\ddot{v}^i_j(r)\|_{B^{-1}}^2+\|\dot{v}^i_j(r)\|_B^2\,\mathrm{d}r\Big)\,,
\end{split}
\end{equation}
where the last inequality is simply due to assuming $s \ge 0$.
Since $v^i_j(-(t^i_j-t_j))=v^{i-1}_j(t_j)\to u^{i-1}$, while $\|v^i_j(0)-u^{i-1}\|=d$,  by arguing as for \eqref{conddelta}, there exists $\gamma_i=\gamma_i(t,d,u^{i-1},M,\beta)>0$ such that
\begin{equation}
-\frac{1}{2}\int_{-(t^i_j-t_j)}^0\|\nabla_xF_t(v^i_j(r))+A\ddot{v}^i_j(r)\|_{B^{-1}}^2+\|\dot{v}^i_j(r)\|_B^2\,\mathrm{d}r\leq -\gamma_i,
\label{condizz1}
\end{equation}
for every fixed $j$.
With \eqref{condizz}, \eqref{fin?},  and \eqref{condizz1} we then conclude that
\begin{equation*}
\begin{split}
F_t(v^i(s))+\frac{1}{2}\|\dot{v}^i(s)\|_A^2&\leq F_t(u^{i-1})-\gamma_i\\
&\leq F_t(u^{i-2})-\gamma_i-\gamma_{i-1}\\
&\ldots\\
&\leq F_t(u^0)-\sum_{k=0}^{i-1}\gamma_i,
\end{split}
\end{equation*}
for all $s \ge 0$, whence, passing to the limit as $s\to+\infty$ in each of the previous inequalities, we get
\begin{equation*}
F_t(v^{i,*})<F_t(u^l),\quad\text{ for every }l=0,\dots,i-1.
\end{equation*}
Thus, $v^{i,*}\neq u^{l}$ for every $l=0,\dots,i-1$. Setting $u^i:=v^{i,*}$, if it results $u^i=v$ then $m=i$ and the proof stops here, otherwise the construction goes on and it will stop after a finite number of steps since $\mathcal{C}_M(t)$ is finite and $u_k(+\infty)=v$.

\endproof

\begin{oss}\label{fine}
If a jump occurs at $t=0$ and $u_-(0)\notin \mathcal{C}(0)$, the statement of the previous Theorem has to be modified as follows. One sets $u^0=u_-(0)$ and the point $u^1$ is obtained as limit for $t\to +\infty$ of a solution $v^1$ to the problem
\[
\begin{cases}
&A\ddot{v}^1(s)+B\dot{v}^1(s)+\nabla_xF_t(v^1(s))=0,\quad\forall\,s\in\R;\\
&v^1(-1)=u^{0}, \quad \dot{v}^1(-1)=0.
\end{cases}
\] 
Notice that above the initial condition can be given at any time other than $t=-1$, since solutions are invariant for time-translations. If $m>1$, then the procedure goes on exactly as in \eqref{properties}.
\end{oss}

\section{Appendix}

We discuss here a  simple explicit example of a (discontinuous) quasistatic evolution originating as limit of \eqref{mainequation}. By doing this, we also highlight the difference between the limit evolutions in our case, compared with the singular limit of gradient flows considered in \cite{Ago-Rossi, Sci-Sol, Zanini}.  Indeed, we will show that the two evolutions, coinciding until the first jump time $t^*=1$, then have a different behavior in the right neighborhood of $t^*$: while the rescaled gradient flow is actually stuck to the closest potential well, a second-order dynamics can overcome an energy barrier, reaching a different limit point.

For this, we consider a non-convex driving energy $F:I\times\R\longrightarrow\R$, with $I:=[0,+\infty)$, defined as
\begin{equation}
F_t(x):=-x(t-1+x^2)+P(x),
\label{driven}
\end{equation}
where $P(x)$ is a function such that $P(x)=0$ if $x\leq0$, $P(x)/x^3\to0$ as $x\to0^+$ (in order to have $F_t(x)\in C^3(I\times\R)$) and $P(x)/x^3\to+\infty$ as $x\to+\infty$ (in order to get the necessary equicoerciveness).

It is easy to see that the function $\varphi(t):=-\sqrt{\frac{1-t}{3}}$, for $t\in[0,1)$, is a curve of local minimizers of $F_t(x)$. Now, for $\epsilon>0$ let $u_{1,\epsilon}$ and $u_{2 ,\epsilon}$ be the solutions of the problem
\begin{equation*}
\begin{cases}
\frac{\epsilon}{4}\dot{u}_{1,\epsilon}(t)=-F_t'(u_{1,\epsilon}(t)),\quad t\in I\\
{u}_{1,\epsilon}(0)=-(1+\epsilon)\sqrt{\frac13}
\end{cases}
\end{equation*}
and
\begin{equation*}
\begin{cases}
\epsilon^2\ddot{u}_{2,\epsilon}(t)+\frac{\epsilon}{4}\dot{u}_{2,\epsilon}(t)=-F_t'(u_{2,\epsilon}(t)), \quad t\in I\\
{u}_{2,\epsilon}(0)=-(1+\epsilon)\sqrt{\frac13}\\
\dot{u}_{2,\epsilon}(0)=1\,,
\end{cases}
\end{equation*}
respectively. It follows from \cite[Lemma 3.1]{Zanini} and \cite[Proposition 3.4 and Remark 3.9]{Ago1}, respectively, that the limit evolutions $u_1, u_2$ satisfy
\begin{equation*}
u_1(t)=u_2(t)=\varphi(t),\quad \mbox{ for every }t\in[0,1).
\end{equation*}
In particular, it holds that
\begin{equation*}
u_1(1-)=u_2(1-)=0,
\end{equation*}
while for $t=1$ we must have a jump, since $\varphi(t)$ is not defined for $t>1$. Our goal is now to show that, up to suitable choosing $P(x)$ in the definition of the driving energy \eqref{driven}, we have
$u_1(1+)\neq u_2(1+)$.
More precisely, let $P(x)$ be such that $F_1(x):=F(1,x)$ complies with the following assumptions:\\
(H1) $F_1'(x)=0$ if and only if $x\in\{0, 1, 2, 9\}$;\\
(H2) $F_1(1)=-1$;\\
(H3) $F_1(2)=-1+\eta$, $\eta>0$;\\
(H4) $F_1(3)=-3$ and $F_0'(x)\leq-1$ for every $x\in[3,8]$;\\
(H5) $F_1''(1)>0$ and $F_1''(9)>0$.
\\
Under the above assumptions, we claim that
\begin{equation}
u_1(1+)=1\neq 9=u_2(1+).
\label{assertion}
\end{equation}

To prove the claim, observe that by assumption (H5), the points $x=1$ and $x=9$ are strict local minimizers of $F_1$. Hence, no heteroclinic orbit of
$\frac14\dot v=-F'_1(v)$ and of $\ddot v+\frac14\dot v=-F'_1(v)$ can start from  $v=1$ or $v=9$ for $s\to -\infty$. With this, \eqref{assertion} is  a direct consequence of \cite[Proposition 1]{Zanini} (for $u_1$) and Theorem \ref{variat} (for $u_2$), once we prove that the heteroclinic solutions of $\frac14\dot v=-F'_1(v)$, and of $\ddot v+\frac14\dot v=-F'_1(v)$ starting from  $v=0$ for $s\to -\infty$ have $v=1$, and $v=9$, respectively, as a limit when $s\to +\infty$. 
We are therefore only left to prove the following Proposition. Below, we set $F=F_1$ for brevity.

\begin{prop}\label{lemmone}
Assume that $F$ complies with assumptions (H1)-(H5) above. Let $v_1$ be the unique (up to translations in time) solution of the first order problem
\begin{equation}
\begin{cases}
\frac{1}{4}\dot{v}=-F'(v),\\
v(-\infty)=0,
\end{cases}
\label{problema1}
\end{equation}
and $v_2$ be the unique (up to translations in time) solution of the second order problem
\begin{equation}
\begin{cases}
\ddot{v}+\frac{1}{4}\dot{v}=-F'(v),\\
v(-\infty)=0,\\
\dot{v}(-\infty)=0.
\end{cases}
\label{problema2}
\end{equation}
Then we have that
\begin{equation}
\lim_{s\to+\infty}v_1(s)=1,
\label{cond1}
\end{equation}
and
\begin{equation}
\lim_{s\to+\infty}v_2(s)=9.
\label{cond2}
\end{equation}
\end{prop}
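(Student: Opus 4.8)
The plan is to read \eqref{problema1} and \eqref{problema2} as autonomous flows and to exploit the qualitative shape of $F=F_1$ forced by (H1)--(H5): the only zeros of $F'$ are $0,1,2,9$, so $F'<0$ on $(0,1)$, $F'>0$ on $(1,2)$, $F'<0$ on $(2,9)$ and $F'>0$ on $(9,+\infty)$, the signs being pinned down by $F(0)=0>F(1)=-1$, $F(1)<F(2)=-1+\eta$, $F(2)>F(3)=-3$ and the coercivity of $F$, together with $F>0$ on $(-\infty,0)$. For the first order orbit this already settles everything: $v_1$ leaves the equilibrium $0$ into $(0,1)$ with $\dot v_1=-4F'(v_1)>0$, stays strictly increasing there since it cannot reach the equilibrium $1$ in finite time, and hence converges as $s\to+\infty$ to some $\ell\in(0,1]$; the $\omega$-limit of a bounded monotone orbit being invariant, $\ell$ is a zero of $F'$, so $\ell=1$, which is \eqref{cond1}.

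For \eqref{problema2} I would first introduce the mechanical energy $E(s):=\tfrac12|\dot v_2(s)|^2+F(v_2(s))$, which satisfies $\dot E=-\tfrac14|\dot v_2|^2\le0$ and $E(-\infty)=F(0)=0$; hence $E\le0$, so $F(v_2)\le0$, and, since $F>0$ on $(-\infty,0)$ and $F$ is coercive, $v_2$ takes values in a compact interval $[0,M]$ with $|\dot v_2|\le\sqrt{2\max_{[0,M]}(-F)}$, so the orbit is bounded in the phase plane. A sign inspection ($\ddot v_2=-F'(v_2)-\tfrac14\dot v_2>0$ at any zero of $\dot v_2$ with $v_2\in(0,1)$, incompatible with $\dot v_2$ decreasing to $0$ from the left) shows that $v_2$ leaves $(0,0)$ monotonically increasing and that at the first time $s_1$ with $v_2=1$ the energy identity gives $\tfrac12|\dot v_2(s_1)|^2=1-(\text{friction on }[0,1])\ge 1-\tfrac1{\sqrt2}>0$: inertia carries $v_2$ over the well at $x=1$. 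Provided $\eta$ is small enough (e.g. $\eta<1-\tfrac1{\sqrt2}$; some such smallness is in fact necessary for the conclusion), the same bookkeeping forbids a turning point $v^{*}\in(1,2)$, which would force the friction dissipated over $[0,v^{*}]$ to equal $-F(v^{*})\in(1-\eta,1)$, whereas that friction is at most $\tfrac14\int_0^2|\dot v_2|\,\mathrm dv\le\tfrac1{\sqrt2}$. Hence $v_2$ reaches $x=2$, at a first time $s_2$, still with $\dot v_2(s_2)>0$.

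After $s_2$ the orbit keeps increasing until it first hits $x=9$: if $\dot v_2$ first vanished at $\bar s>s_2$ with $v_2(\bar s)\in(2,9)$ we would again get $\ddot v_2(\bar s)=-F'(v_2(\bar s))>0$, impossible, so $v_2(\bar s)\ge9$ and there is $s_9>s_2$ with $v_2(s_9)=9$ and $\dot v_2>0$ on $(s_2,s_9)$. On $[2,9]$, writing $w(v):=\tfrac12|\dot v_2|^2$ as a function of position, the equation gives $(w+F)'(v)=-\tfrac14\sqrt{2w}$, hence $w(v)=w(2)+F(2)-F(v)-D(v)$ with $D(v):=\tfrac14\int_2^v\sqrt{2w}\,\mathrm dr$; using $-F\ge-F(3)=3$ on $[3,9]$ and $F(2)\ge-1$ one gets $w\ge 2-D_{29}$ on $[3,9]$, where $D_{29}:=D(9)$ is the friction dissipated over $[2,9]$. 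Therefore either $D_{29}\ge2$, or $w\ge2-D_{29}>0$ on $[3,9]$ forces $D_{29}\ge\tfrac14\int_3^9\sqrt{2(2-D_{29})}\,\mathrm dv=\tfrac32\sqrt{2(2-D_{29})}$; in both cases $D_{29}\ge\tfrac32$. Consequently $E(s_9)\le-D_{29}\le-\tfrac32<-1=\min_{[0,2]}F$, so for every $s\ge s_9$ we have $F(v_2(s))\le E(s)\le E(s_9)<\min_{[0,2]}F$, which forces $v_2(s)>2$; since also $v_2(s)\le M$, the orbit stays for $s\ge s_9$ in $(2,M]\times\mathbb R$, whose only equilibrium is $(9,0)$. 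Being bounded there, its $\omega$-limit set is nonempty, compact, connected and invariant; since $\dot E$ vanishes only where $\dot v=0$ and the equilibria are isolated by (H1), it reduces to a single equilibrium, which cannot be $(2,0)$ because $E(s)\le-\tfrac32<F(2)$ for $s\ge s_9$, hence it is $(9,0)$. This proves \eqref{cond2}.

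The delicate point is this last step. Everything up to $x=9$ is driven by inertia --- the momentum gained descending from $x=0$ to $x=1$ overcomes the small barrier $\eta$ at $x=2$, and the downhill stretch $(2,9)$ then certainly carries the particle to $x=9$ --- but after overshooting the bottom of that well the particle could a priori slosh back over the barrier at $x=2$ and be captured by another well. What rules this out, and pins down $9$ (rather than $1$ or $2$) as the limit, is the explicit lower bound $D_{29}\ge\tfrac32$ on the friction lost while descending $(2,9)$, which drops $E$ strictly below both $F(1)$ and $F(2)$; the smallness of $\eta$ is used only to guarantee that the particle clears the barrier at $x=2$ in the first place, and an analogous, simpler energy argument (no barrier to clear) gives the first-order conclusion \eqref{cond1}.
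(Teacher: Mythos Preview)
Your argument is correct and follows the same physical storyline as the paper's proof---use energy and friction estimates to push $v_2$ past the barriers at $x=1$ and $x=2$, then show that enough dissipation occurs on the long downhill stretch to trap the orbit near $x=9$---but the execution differs in two meaningful ways.

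First, the paper argues by contradiction: it assumes $v_2(+\infty)\le 2$, which forces the \emph{total} dissipation $D=\tfrac14\int_\R|\dot v_2|^2\,ds\le 1$, and then derives $D>\tfrac54$ from the behaviour on $[3,8]$. You instead argue directly: once $v_2$ reaches $x=9$, the friction already spent on $[2,9]$ alone drops the energy below $-\tfrac32<\min_{[0,2]}F$, so the orbit can never re-enter $[0,2]$ and the only available $\omega$-limit is $(9,0)$. Your route is arguably cleaner, since it avoids juggling the global contradiction hypothesis.

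Second, the key friction bound is obtained differently. The paper uses the derivative condition $F_1'\le -1$ on $[3,8]$ from (H4) to get $\dot v_2>1$ there, whence $D\ge\tfrac14\cdot5=\tfrac54$. You use only the value $F_1(3)=-3$ from (H4) together with the monotonicity of $F$ on $(2,9)$: writing $w(v)=\tfrac12|\dot v_2|^2$ and $(w+F)'=-\tfrac14\sqrt{2w}$, you extract the self-consistent inequality $D_{29}\ge\tfrac32\sqrt{2(2-D_{29})}$, yielding $D_{29}\ge\tfrac32$. This actually uses \emph{less} of (H4) than the paper does. Both proofs need $\eta$ small (of the order $1-\tfrac1{\sqrt2}$), and both are a bit terse about the behaviour near $s=-\infty$ and about ruling out monotone convergence to $x=1$ or $x=2$ (which, as you implicitly use, is excluded by the same friction bound $\le\tfrac1{\sqrt2}$); the paper handles these points slightly more explicitly via its choice of an initial time $s_0$ and the constant $c_\eta$ in Claim~2.
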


\proof
The existence and uniqueness (up to translations in time) of $v_1$ and $v_2$ are a consequence of \cite[Lemma 2.1]{Zanini} and \cite[Lemma 2.5]{Ago1}, respectively. 
Since the total energy is nonincreasing along the evolutions, and $F_1(v)>0$ for $v<0$, we have $v_1(t), v_2(t)>0$ for all $t$. Since $v=1$ is a local minimum of the energy, \eqref{cond1} is an immediate consequence of the gradient flows' properties. In order to prove \eqref{cond2},  we observe that the $\omega$-limit set of $(v_2(s), \dot v_2(s))$ can only consist of equilibria, since we have a second-order scalar dissipative equation.  Hence, it will be sufficient to show that
\begin{equation}
\lim_{s\to+\infty}v_2(s)>2,
\label{tesi}
\end{equation}
since $v=9$ is then the unique available limit point.

We preliminarly  note that we can fix $s_0\in\R$ such that $\dot{v}_2(s_0)>0$, $v_2(s_0)\in[0,\eta]$ and $\int_{-\infty}^{s_0}|\dot{v}_2(s)|^2\,\mathrm{d}s\leq\eta$. We argue by contradiction, and assume that $v_2(+\infty)\leq 2$. In this case, it will be either $v_2(+\infty)=1$ or $v_2(+\infty)=2$. Setting
\begin{equation*}
D:=\frac{1}{4}\int_\R|\dot{v}_2(s)|^2\,\mathrm{d}s,
\end{equation*}
in the first case by (H2) we have $D=1$, while in the second one it results $D=1-\eta$ by (H3). Thus,
\begin{equation}
D\leq1.
\label{(**)}
\end{equation}

{\bf Claim 1:} there exists $t_0\in\R$ such that $v_2(t_0)=1$ and $\dot{v}_2(t_0)>0$.\\
For this, we assume by contradiction that $v_2(t)<1$ for every $t\in[s_0,+\infty)$. In this case, we have $v_2(+\infty)=1$. Furthermore, it holds $\dot{v}_2(t)>0$ for every $t$. Indeed, if not, since $\dot{v}_2(s_0)>0$, it would exist $t$ such that $\dot{v}_2(t)=0$ and $\ddot{v}_2(t)\leq0$. But this contradicts the equation $\ddot{v}_2(t)=-\frac{1}{4}\dot{v}_2(t)-F'(v_2(t))$, since $F'(v)<0$ for $v\in(0,1)$. Now, one the one hand from $v_2(+\infty)=1$ we infer that
\begin{equation}
\frac{1}{4}\int_\R|\dot{v}_2(s)|^2\,\mathrm{d}s=F(v_2(-\infty))-F(v_2(+\infty))=F(0)-F(1)=1.
\label{stimone0}
\end{equation}
On the other hand,
\begin{equation}
\begin{split}
\frac{1}{4}\int_\R|\dot{v}_2(s)|^2\,\mathrm{d}s&=\frac{1}{4}\int_{-\infty}^{s_0}|\dot{v}_2(s)|^2\,\mathrm{d}s+\frac{1}{4}\int_{s_0}^{+\infty}|\dot{v}_2(s)|^2\,\mathrm{d}s\\
&\leq \frac{\eta}{4}+\frac{1}{4}\int_{s_0}^{+\infty}\dot{v}_2(s)\cdot\dot{v}_2(s)\,\mathrm{d}s.
\end{split}
\label{stimone1}
\end{equation}
Since
\begin{equation*}
\int_{s_0}^{+\infty}\dot{v}_2(s)\,\mathrm{d}s=1-v_2(s_0)\leq1
\end{equation*}
and
\begin{equation*}
\frac{1}{2}|\dot{v}_2(t)|^2\leq F(0)-F(v_2(t))\leq F(0)-F(1)=1,\quad\mbox{ for every }t\in\R,
\end{equation*}
by \eqref{stimone1} we can deduce that
\begin{equation*}
\frac{1}{4}\int_\R|\dot{v}_2(s)|^2\,\mathrm{d}s\leq\frac{\eta+\sqrt{2}}{4},
\end{equation*}
which is in contradiction with \eqref{stimone0}. Then, there exists $t_0$ such that $v_2(t_0)=1$. As remarked before, $\dot{v}_2(t_0)\geq0$, but since $v=1$ is a critical point of the energy, it must be $\dot{v}_2(t_0)>0$.\\
{\bf Claim 2:} there exists $t_1\in\R$ such that $v_2(t_1)=2$ and $\dot{v}_2(t_1)>0$.\\
As before, we argue by contradiction and assume that $v_2(t)\le 2$ for every $t\in[t_0,+\infty)$. 
Set $c_\eta=\frac12\min\{\dot v_2(t_0), \eta\}$ and let $\hat t$ be the smallest time in $(t_0, +\infty)$ with $\dot v_2(\hat t)=c_\eta$, which must exist since $\dot v_2(t)\to 0$ when $t\to +\infty$. Notice that, since $\dot{v}_2(t)>0$ for every $t\in[s_0, t_0]$, we also have $\dot{v}_2(t)>0$ for every $t\in[s_0, \hat t]$. Moreover, we get
\begin{equation}
\frac12 c_\eta^2+\frac{1}{4}\int_{-\infty}^{\hat{t}}|\dot{v}_2(s)|^2\,\mathrm{d}s=F(0)-F(v_2(\hat{t}))\geq 1-\eta,
\label{stimone3}
\end{equation}
since $1= v_2(t_0)\le  v_2(\hat{t})\le 2$.
Furthermore, for every $t\in(-\infty,\hat{t}]$ it holds that
\begin{equation*}
\frac{1}{2}|\dot{v}_2(t)|^2\leq F(0)-F(v_2(t))\leq F(0)-F(1)=1\,.
\end{equation*}
We then obtain
\begin{equation*}
\begin{split}
\frac{1}{4}\int_{-\infty}^{\hat{t}}|\dot{v}_2(s)|^2\,\mathrm{d}s&\leq \frac{\eta}{4}+\frac{1}{4}\int_{s_0}^{\hat{t}}|\dot{v}_2(s)|^2\,\mathrm{d}s\leq\frac{\eta}{4}+\frac{\sqrt{2}}{4}\int_{s_0}^{\hat{t}}\dot{v}_2(s)\,\mathrm{d}s\\
&\leq \frac{\eta+2\sqrt{2}}{4},
\end{split}
\end{equation*}
which contradicts \eqref{stimone3} for $\eta$ small. Then, the smallest time $t_1$ where $v_2(t_1)=2$ is well defined and, as $v=2$ is an equilibrium, it must be $\dot{v}_2(t_1)>0$.

Now, since $F'(v)<0$ for every $v\in[2,9]$, as already done for Claim 1 we can show that, if $M>0$ is chosen such that $v_2(t)<9$ for every $t\in[t_1,t_1+M]$, then $\dot{v}_2(t)>0$ in $[t_1,t_1+M]$. As a consequence, there exist $t_2\leq t_3$ such that 
\begin{equation}\label{inter}
v_2(t_2)=3\leq v_2(t) \leq 8=v_2(t_3),\quad \forall t\in[t_2,t_3].
\end{equation}
In addition,
\begin{equation*}
\frac{1}{2}|\dot{v}_2(t_2)|^2+\frac{1}{4}\int_{-\infty}^{t_2}|\dot{v}_2(r)|^2\,\mathrm{d}r=F(0)-F(3)=3.
\end{equation*}
This implies, with \eqref{(**)}, that
\begin{equation*}
3\leq \frac{1}{2}|\dot{v}_2(t_2)|^2+D\leq\frac{1}{2}|\dot{v}_2(t_2)|^2+1,
\end{equation*}
from which $\dot{v}_2(t_2)\geq2$. With this, using the equation \eqref{problema2},  \eqref{inter}, and assumption (H4), we can show with similar arguments as before that $\dot{v}_2(t)>1$ for every $t\in[t_2,t_3]$. From this, we finally deduce
\begin{equation*}
\begin{split}
D&\geq \frac{1}{4}\int_{t_2}^{t_3}|\dot{v}_2(r)|^2\,\mathrm{d}r=\frac{1}{4}\int_{t_2}^{t_3}\dot{v}_2(r)\cdot\dot{v}_2(r)\,\mathrm{d}r\geq \frac{1}{4}\int_{t_2}^{t_3}\dot{v}_2(r)\,\mathrm{d}r\\
&=\frac{1}{4}(v_2(t_3)-v_2(t_2))=\frac{8-3}{4}>1,
\end{split}
\end{equation*}
thus contradicting \eqref{(**)}. Therefore, \eqref{tesi} must hold.
\endproof



\begin{thebibliography} {99}

\bibitem{Ago1} V. Agostiniani, Second order approximations of quasistatic evolution problems in finite dimension, \emph{Discrete Contin. Dyn. Syst.} {\bf 32} (2012), 1125--1167.

\bibitem{Ago-Rossi} V. Agostiniani and R. Rossi, Singular vanishing-viscosity limits of gradient flows: the finite-dimensional case, \emph{Journal of Differential Equations} {\bf 263}(11) (2017), 7815--7855.

\bibitem{AFP} L. Ambrosio, N. Fusco, and D. Pallara, \emph{Functions of bounded variation and free discontinuity problems}, Oxford Mathematical Monographs, Clarendon Press, Oxford (2000).

\bibitem{DalSca} G. Dal Maso and R. Scala, Quasistatic evolution in perfect plasticity as limit of dynamic processes, \emph{J. Dyn. Diff. Equat.} {\bf 26}(4) (2014), 915--954.

\bibitem{EM} M. Efendiev and A. Mielke, On the Rate-Independent Limit of Systems with Dry Friction and Small Viscosity. \emph{J. Convex Anal.} {\bf 13} (2006), 151--167.

\bibitem{LN2} G. Lazzaroni and L. Nardini, Analysis of a dynamic peeling test with speed dependent toughness, \emph{SIAM J. Appl. Math.} {\bf 78} (2018), 1206--1227. 

\bibitem{LN1} G. Lazzaroni and L. Nardini, On the quasistatic limit of dynamic evolutions for a peeling test in dimension one,  \emph{J. Nonlinear Sci.} {\bf 28} (2018), 269--304. 

\bibitem{LRTT} G. Lazzaroni, R. Rossi, M. Thomas and R. Toader, Rate-independent damage in thermo-viscoelastic materials with inertia, \emph{J. Dynam. Differential Equations} {\bf 30} (2018), 1311--1364.

\bibitem{Mielke} A. Mielke, Evolution of rate-independent systems. \emph{Handbook of Differential Equations: Evolutionary Equations}, Vol. 2 (2005), 1--663.

\bibitem{MRS1} A. Mielke, R. Rossi and G. Savar\'e, BV Solutions and Viscosity Approximations of Rate-Independent Systems. \emph{ESAIM Control Optim. Calc. Var.} {\bf 18}(1) (2012), 36--80.

\bibitem{MRS2} A. Mielke, R. Rossi and G. Savar\'e, Modeling Solutions with Jumps for Rate-Independent Systems on Metric Spaces. \emph{Discrete Contin. Dyn. Syst.} {\bf 25} (2009), 585--615.

\bibitem{MT} A. Mielke and L. Truskinovsky, From Discrete Visco-Elasticity to Continuum Rate-Independent Plasticity: Rigorous Results. \emph{Arch. Ration. Mech. Anal.} {\bf 203}(2) (2012), 577--619.

\bibitem{N} L. Nardini, A note on the convergence of singularly perturbed second order potential-type equations, \emph{J. Dyn. Diff. Equat.} {\bf 29}(2017), 783--797.

\bibitem{Negri1} M. Negri, Quasi-static rate-independent evolutions: characterization, existence, approximation and application to fracture mechanics, \emph{ESAIM Control Optim. Calc. Var.} {\bf 20} (4) (2014), 983--1008. 

\bibitem{Rubi} T. Roub\'icek, Adhesive contact of visco-elastic bodies and defect measures arising by vanishing viscosity, \emph{SIAM J. Math. Anal.} {\bf 45} (2013), 101--126.

\bibitem{Sca} R. Scala, Limit of viscous dynamic processes in delamination as the viscosity and inertia vanish, \emph{ESAIM: COCV} {\bf 23} (2) (2017), 593--625.

\bibitem{Sci-Sol} G. Scilla and F. Solombrino, Multiscale analysis of singularly perturbed finite dimensional gradient flows: the minimizing movement approach, \emph{Nonlinearity} {\bf 31}(11) (2018), 5036--5074.  

\bibitem{Zanini} C. Zanini, Singular perturbations of finite dimensional gradient flows, \emph{Discrete Contin. Dyn. Syst. Ser. A} {\bf 18} (2007), 657--675.

\end{thebibliography}

\end{document}